\numberwithin{equation}{section}
\theoremstyle{plain}
\newtheorem{theorem}{Theorem}[section]
\newtheorem{corollary}[theorem]{Corollary}
\newtheorem{lemma}[theorem]{Lemma}
\newtheorem{proposition}[theorem]{Proposition}
\theoremstyle{definition}
\newtheorem{remark}[theorem]{Remark}
\newtheorem{example}[theorem]{Example}
\theoremstyle{remark}
\newcommand{\OO}{\mathcal O}
\newcommand{\A}{\mathbb{A}}
\newcommand{\R}{\mathbb{R}}
\newcommand{\Q}{\mathbb{Q}}
\newcommand{\Z}{\mathbb{Z}}
\newcommand{\N}{\mathbb{N}}
\newcommand{\C}{\mathbb{C}}
\renewcommand{\H}{\mathbb{H}}
\newcommand{\F}{\mathbb{F}}
\newcommand{\D}{\mathbb{D}}
\newcommand{\X}{\mathbb{X}}
\newcommand{\V}{\mathbb{V}}
\newcommand{\W}{\mathbb{W}}
\newcommand{\LL}{\mathbb{L}}
\newcommand{\zxz}[4]{\begin{pmatrix} #1 & #2 \\ #3 & #4 \end{pmatrix}}
\newcommand{\kzxz}[4]{\left(\begin{smallmatrix} #1 & #2 \\ #3 & #4\end{smallmatrix}\right) }
\newcommand{\kabcd}{\kzxz{a}{b}{c}{d}}
\newcommand{\calC}{\mathcal{C}}
\newcommand{\calE}{\mathcal{E}}
\newcommand{\calH}{\mathcal{H}}
\newcommand{\calM}{\mathcal{M}}
\newcommand{\calO}{\mathcal{O}}
\newcommand{\calP}{\mathcal{P}}
\newcommand{\frakp}{\mathfrak p}
\newcommand{\bs}{\backslash}
\newcommand{\norm}{\operatorname{N}}
\newcommand{\vol}{\operatorname{vol}}
\newcommand{\tr}{\operatorname{tr}}
\newcommand{\sgn}{\operatorname{sgn}}
\newcommand{\Sl}{\operatorname{SL}}
\newcommand{\GSpin}{\operatorname{GSpin}}
\newcommand{\Hasse}{\operatorname{Hasse}}
\newcommand{\CT}{\operatorname{CT}}
\newcommand{\Orth}{\operatorname{O}}
\newcommand{\Char}{\operatorname{char}}
\newcommand{\Mat}{\operatorname{Mat}}
\newcommand{\Spec}{\operatorname{Spec}}
\newcommand{\GL}{\operatorname{GL}}
\newcommand{\SO}{\operatorname{SO}}
\newcommand{\Res}{\operatorname{Res}}
\newcommand{\supp}{\operatorname{supp}}
\newcommand{\dv}{\operatorname{div}}
\newcommand{\ord}{\operatorname{ord}}
\newcommand{\kay}{k}
\newcommand{\Gspin}{\operatorname{GSpin}}
\newcommand{\ff}{\hbox{if }}
\newcommand{\SL}{\operatorname{SL}}
\newcommand{\Diff}{\operatorname{Diff}}
\newcommand{\B}{\mathbb B}
\begin{document}

\title[CM values of automorphic Green functions]{CM values of automorphic Green functions on orthogonal groups over totally real fields}

\author[Jan H.~Bruinier and Tonghai Yang]{Jan
Hendrik Bruinier and Tonghai Yang}
\dedicatory{To Stephen S. Kudla}

\address{Fachbereich Mathematik,
Technische Universit\"at Darmstadt, Schlossgartenstrasse 7, D--64289
Darmstadt, Germany}
\email{bruinier@mathematik.tu-darmstadt.de}
\address{Department of Mathematics, University of Wisconsin Madison, Van Vleck Hall, Madison, WI 53706, USA}
\email{thyang@math.wisc.edu} \subjclass[2000]{14K22; 11G15, 11F55, }

\thanks{The first author is partially supported DFG grant BR-2163/2-1.
The second author is partially supported by grants NSF DMS-0855901
and NSFC-10628103.}


\date{\today}

\begin{abstract}
  Generalizing work of Gross--Zagier and Schofer on singular moduli,
  we study the CM values of regularized theta lifts of harmonic
  Whittaker forms.  We compute the archimedian part of the height
  pairing of arithmetic special divisors and CM cycles on Shimura
  varieties associated to quadratic spaces over an arbitrary
  totally real base field.  As a special case, we obtain an explicit
  formula for the norms of the CM values of those meromorphic modular
  forms arising as regularized theta lifts of holomorphic Whittaker
  forms.
\end{abstract}

\maketitle


\section{Introduction}

The values of the classical $j$-function at complex multiplication points  (CM points) are known as singular moduli. They play an important role in number theory, for instance, they are algebraic integers that generate Hilbert class fields of imaginary quadratic fields, and they parametrize elliptic curves with complex multiplication.
Gross and Zagier found a beautiful explicit formula for the prime factorization of the norm of (the difference of two) singular moduli \cite{GZsingular}. A striking consequence is that the prime factors
are small:
If $E$ is an elliptic curve with complex multiplication by the maximal order of an imaginary quadratic field of discriminant $\Delta<0$,  then
any prime dividing the norm of $j(E)$ must divide
$\frac{1}{4}(-3\Delta-x^2)$ for some integer $x$ with $|x|<\sqrt{3|\Delta|}$ and $x^2\equiv  \Delta\pmod{4}$.


The work of Gross and Zagier has inspired a lot of subsequent research in different directions. For instance, Dorman  relaxed some technical assumptions \cite{Do1}, and obtained an analogue for rank 2 Drinfeld modules \cite{Do2}.
Elkies considered Hauptmodules on certain genus zero compact Shimura curves and computed (partly numerically) some of their CM values \cite{El}, \cite{El2}.
Lauter found a conjecture on the primes occuring in the denominators of the CM values of Igusa's $j$-invariants on the moduli space of principally polarized abelian surfaces. Bounds for the denominators of these invariants have interesting applications for the construction of CM genus 2 curves in cryptography. See \cite{GL1}, \cite{GL2}, \cite{YaGeneral} for results in this context.

The $j$-function is an example of a Borcherds product  for the group $\Sl_2(\Z)$, and the difference $j(z_1)-j(z_2)$ of two $j$-functions  is an example of a Borcherds product for $\Sl_2(\Z)\times \Sl_2(\Z)$. Therefore it is natural to ask  whether similar factorization formulas can be obtained for such modular forms in greater generality. An affirmative answer was given by the authors of the present paper  for the values of Borcherds products on Hilbert modular surfaces at `big' CM cycles \cite{BY1}, and by Schofer for the values of Borcherds products at `small' CM cycles \cite{Scho}.

The argument of Schofer is very natural and is based on a method introduced in \cite{Ku:Integrals}. It employs the construction of Borcherds products as regularized theta lifts \cite{Bo2}, the idea of see-saw dual pairs \cite{KuSeesaw}, and the Siegel--Weil formula \cite{We2}, \cite{KR1}.
As an application,  Errthum derived factorization formulas for the norms of CM values of Hauptmodules on Shimura curves of genus zero associated to quaternion algebras over $\Q$ \cite{Err}. In particular, he verified the numerical values computed by Elkies  for such Shimura curves.
The authors of the present paper extended Schofer's approach  to compute the CM values of automorphic Green functions \cite{Br1}, \cite{BF}, yielding a direct link between certain height pairings and central derivatives of Rankin-Selberg $L$-functions, see \cite{BY}. As an application, they obtained a new proof of the Gross-Zagier formula for the canonical heights of Heegner points on modular curves \cite{GZ}.

Borcherds products are certain meromorphic modular forms for the orthogonal group of a quadratic space {\em over $\Q$} of signature $(n,2)$, which are constructed as regularized theta lifts of weakly holomorphic elliptic modular forms of weight $1-n/2$.
%
It was already pointed out by Borcherds in \cite{Bo2} that a direct analogue of his construction for totally real number fields other than $\Q$ cannot exist. This would be a lifting from weakly holomorphic Hilbert modular forms of typically negative weight  to meromorphic modular forms for the orthogonal group of a quadratic space over a totally real number field  $F$. But according to the Koecher principle, any weakly holomorphic Hilbert modular form is automatically holomorphic at the cusps as well.

As a solution to this problem the first author proposed in \cite{Br2} to use harmonic  ``Whittaker forms''  (see Section \ref{sect:3.2}) as input data for a regularized theta lift over an arbitrary totally real field $F$. He constructed a map which  produces meromorphic modular forms and automorphic Green functions on orthogonal groups and which reduces to the Borcherds lift in the special case where the ground field is $\Q$. For instance, for a Shimura curve associated to a quaternion algebra over a totally real field, one obtains meromorphic modular forms whose whose zeros and poles lie on CM points.

In the present paper we find an explicit formula for the CM values of regularized theta lifts of harmonic Whittaker forms, thereby extending the results of \cite{Scho} and the results of \cite{BY} on archimedian height pairings to quadratic spaces over arbitrary totally real base fields $F$. Although the basic idea is the same as in Schofer's work, several new aspects arise. For instance, the regularized integral is {\em not} defined as an integral over a (truncated) fundamental domain for the Hilbert modular group, but as an integral over a fundamental domain for the subgroup of translations.
Moreover, since the Shimura variety we work with is defined over $F$, the archimedian height pairing consists of several local contributions. For the individual pieces we obtain an integral representation (Theorem \ref{thm:fund}), which cannot be evaluated in finite form. Only if we piece together the local heights at all archimedian places, we obtain a finite expression, which yields (for weakly holomorphic input)  the prime factorization of the norm of a CM value (Theorem \ref{thm:fundtr}).
As an example we compute the CM values of the Hauptmodule on a Shimura curve considered by Elkies associated to a quaternion algebra over the maximal totally real field subfield of $\Q(\zeta_7)$.

We now describe the main results of this paper in more detail.
Let $F$ be
a totally real number field of degree $d$ and discriminant $D$.
Let $\sigma_1,\dots,\sigma_d$ be the different embeddings of $F$ into
$\R$.  We write
$\calO_F$ for
the ring of integers and $\partial_F$ for the different of $F$. Let
$(V,Q)$ be a quadratic space over $F$ of dimension $\ell=n+2$. We
assume that  $V$ has signature $(n,2)$ at the place $\sigma_1$ of $F$,
and that $V$ is positive definite at all other archimedian places.
Throughout we assume that $V$ is anisotropic over $F$ or has Witt rank
over $F$ less than $n$. By the assumption on the signature this is
always the case if $d>1$ or $n>2$.

We consider the algebraic group $H=\Res_{F/\Q}\GSpin(V)$ over $\Q$
given by Weil restriction of scalars.
We realize the hermitian symmetric space associated to $H(\R)$ as the Grassmannian $\D$ of oriented negative $2$-planes in $V\otimes_{F,\sigma_1}\R$.
For a compact open subgroup $K\subset H(\hat \Q)$  we
consider the Shimura variety
\begin{align*}
X_{K,1}=H(\Q)\bs (\D\times H(\hat \Q))/K.
\end{align*}
It is a complex quasi-projective variety of dimension $n$, which has a
canonical model over $\sigma_1(F)$, see \cite{Shih}. To simplify the exposition
we assume throughout this introduction that $K$ stabilizes an even unimodular $\calO_F$-lattice $L\subset V$, that $n$ is even, and that
the Shimura variety $X_{K,1}$ is connected. These assumptions are not required in the body of the paper.

It is a special feature of such Shimura varieties that they come with a large supply of algebraic cycles given by quadratic subspaces of $V$, see \cite{Ku:Duke}.
For instance, let $W\subset V$ be a totally positive definite subspace of dimension $n$ defined over $F$. Then the orthogonal complement $W^\perp$ is definite of dimension $2$,
and
we obtain two points $z_W^\pm\in \D$ given by $W^\perp\otimes_{F,\sigma_1}\R$ with the two possible choices of an orientation. Let  $H_W$ be the pointwise
stabilizer of $W$ in $H$,  so that $H_W\cong \Res_{F/\Q}\GSpin(W^\perp)$.
The natural map
\begin{align*}
H_W(\Q)\bs \{z_W^{\pm}\}\times H_W(\hat\Q)/(H_W(\hat \Q)\cap K)
\longrightarrow X_{K,1}
\end{align*}
defines a dimension $0$-cycle $Z_1(W)$ on $X_{K,1}$, which is rational
over $\sigma_1(F)$, see \cite{Ku:Duke}. Since
$\GSpin(W^\perp)$ can be identified with $k^\times$
for a totally imaginary quadratic extension $k$ of $F$, the cycle
$Z_1(W)$ is called the CM cycle associated to $W$.


A principal part polynomial is a Fourier polynomial of the the Form
\begin{align*}
\calP= \sum_{\substack{m\in \partial_F^{-1}\\ m\gg 0}}  c(m) q^{-m},
\end{align*}
where $q^{m}=e^{2\pi i \tr(m\tau)}$ for $\tau\in \H^d$. We let
$Z_1(\calP)=\sum_{\substack{ m\gg 0}}  c(m) Z_1(m)$
be the corresponding linear combination of special divisors $Z_1(m)$
on $X_{K,1}$, see Sections \ref{sect:2.1} and \ref{sect:7.1}.
The principal part polynomial is called weakly holomorphic, if
\begin{align*}
\sum_{\substack{ m\gg 0}}  c(m) b(m)=0,
\end{align*}
for all Hilbert cusp forms $g=\sum_m b(m)q^m$ of parallel weight $1+n/2$ for $\Sl_2(\calO_F)$.

It is proved in \cite{Br2}, that if $\calP$ is a weakly holomorphic principal part polynomial with integral coefficients $c(m)$, then there exists a meromorphic modular form
$R_\calP(z,h)$ on $X_{K,1}$ with divisor $Z_1(\calP)$, which is defined over $F$,
and whose logarithm is essentially given by a regularized theta lift of a weakly holomorphic Whittaker form corresponding to $\calP$, see also Theorem \ref{thm:bop}.
The weight of $R_\calP$ is given  by the degree of the divisor $Z_1(\calP)$, or equivalently by the coefficients of a certain Hilbert Eisenstein series of weight $1+n/2$.
%
In this introduction we assume for simplicity that $\deg(Z_1(\calP))=0$, so that $R_\calP$ has weight zero.
One of our main results (see Corollaries \ref{cor:s1} and \ref{cor:s2}) describes the norm of the CM value
\[
R_\calP(Z_1(W)) = \prod_{[z,h]\in Z_1(W)} R_\calP(z,h)\in \sigma_1(F).
\]

\begin{theorem}
\label{thm:intro1}
Let $\calP$ and $W$ be as above and assume that  $Z_1(W)$ and $Z_1(\calP)$ do not intersect on $X_{K,1}$.
Then the norm of the CM value $R_\calP(Z_1(W))$ is given by
\begin{align*}
\log|\norm_{\sigma_1(F)/\Q} R_\calP(Z_1(W))|
&=\deg(Z_1(W))\left(\log(C)-\frac{1}{4} \CT\langle\calP,\Theta_P\otimes\calE_N^{(0)}\rangle\right).
\end{align*}
Here $\Theta_P$ is the (vector valued) theta function of the totally positive definite lattice $P=W\cap L$, and $\calE_N^{(0)}$ is the `holomorphic part' of a parallel weight $1$ incoherent Hilbert Eisenstein series associated to the lattice $N=W^\perp \cap L$, see Section \ref{sect:Eisenstein}. Moreover, $\CT(\cdot)$ denotes the constant term of a $q$-series, and
$C\in \R_{>0}$ is a normalizing constant which only depends on $\calP$ (and not on  $W$).
\end{theorem}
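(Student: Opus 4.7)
The plan is to express $\log\lvert\norm_{\sigma_1(F)/\Q} R_\calP(Z_1(W))\rvert$ as a regularized theta integral over the CM cycle, then reduce it via a seesaw identity and the Siegel--Weil formula to an integral of Whittaker form against a product of a theta series and an Eisenstein series, and finally extract the constant term. This follows the general strategy of Schofer \cite{Scho} and \cite{BY}, adapted to the totally real setting of \cite{Br2} where the regularization is carried out over a fundamental domain for the subgroup of translations only.

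First, by the construction of Theorem \ref{thm:bop} applied pointwise, for any $(z,h)\in X_{K,1}$ away from $Z_1(\calP)$ one has an identity of the shape
\begin{align*}
-2\log\lvert R_\calP(z,h)\rvert \;=\; \int^{\mathrm{reg}}_{\calF_\infty}\bigl\langle f_\calP(\tau),\, \Theta_L(\tau,z,h)\bigr\rangle\, d\mu(\tau),
\end{align*}
where $f_\calP$ is the harmonic Whittaker form attached to $\calP$, $\Theta_L$ is the vector-valued Siegel theta kernel of the unimodular lattice $L\subset V$, and $\calF_\infty\subset \H^d$ is a fundamental domain for the group of translations. Summing over the (finite) CM cycle $Z_1(W)$ and swapping the sum with the regularized integral gives
\begin{align*}
-2\log\bigl\lvert\norm_{\sigma_1(F)/\Q} R_\calP(Z_1(W))\bigr\rvert \;=\; \int^{\mathrm{reg}}_{\calF_\infty}\!\Bigl\langle f_\calP(\tau),\!\!\sum_{[z,h]\in Z_1(W)}\!\!\Theta_L(\tau,z,h)\Bigr\rangle d\mu(\tau).
\end{align*}

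Next, I invoke the seesaw dual pair associated to the orthogonal decomposition $V=W\oplus W^\perp$. At any point $z_W^\pm\in \D$ arising from $W^\perp\otimes_{F,\sigma_1}\R$, the theta kernel factors as $\Theta_L(\tau,z_W^\pm,h)=\Theta_P(\tau)\otimes \Theta_N(\tau,h)$ with $P=W\cap L$ totally positive definite and $N=W^\perp\cap L$ of signature $(0,2)$ at $\sigma_1$ and totally positive definite elsewhere. Averaging $\Theta_N(\tau,\cdot)$ over $Z_1(W)$ is a theta integral on the archimedeanly compact group $\GSpin(W^\perp)(\A)$, to which the Siegel--Weil formula applies: the sum equals $\deg(Z_1(W))$ times the value at $s=0$ of an incoherent Hilbert Eisenstein series $\calE_N(\tau,s)$ of parallel weight $1$ (see Section \ref{sect:Eisenstein}). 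Incoherence forces $\calE_N(\tau,0)\equiv 0$ in the naive sense, but the Whittaker coefficients decompose into a "holomorphic part" $\calE_N^{(0)}$ and a transcendental part, and only $\calE_N^{(0)}$ will survive the regularization.

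The remaining step is to compute the regularized integral
\begin{align*}
\int^{\mathrm{reg}}_{\calF_\infty}\bigl\langle f_\calP(\tau),\, \Theta_P(\tau)\otimes\calE_N(\tau,0)\bigr\rangle\, d\mu(\tau),
\end{align*}
which is precisely the archimedean local height encoded in Theorem \ref{thm:fund}. Writing $f_\calP=\calP+(f_\calP-\calP)$ and expanding in Fourier series in the translation variables, the integral over $\calF_\infty$ isolates the coefficient with zero index in the real parts. Stokes' theorem, applied to primitives of the non-holomorphic contributions along the ends of $\calF_\infty$, then collapses the integral to the constant term pairing $\CT\langle \calP,\Theta_P\otimes\calE_N^{(0)}\rangle$, up to a multiplicative constant absorbing the normalization of the theta measure, the gauge of $R_\calP$, and the Eisenstein normalization. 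Since these normalizations involve only $\calP$ and the ambient data, they assemble into a constant $C>0$ independent of $W$. Dividing by $-2$ and summing the local contributions across all $d$ archimedean places as in Theorem \ref{thm:fundtr} yields the claimed identity.

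The main obstacle is the control of the regularized integral over $\calF_\infty$: unlike the classical case treated in \cite{Scho} and \cite{BY}, the Koecher principle forbids an integration over a fundamental domain of the full Hilbert modular group, so the boundary contributions at each of the $d$ cusps at infinity must be analyzed separately. Theorem \ref{thm:fund} yields only an integral representation at a single place and cannot be evaluated in closed form place by place; the miracle (Theorem \ref{thm:fundtr}) is that once all archimedean contributions are assembled the transcendental pieces cancel. Given those two results, the theorem of the introduction follows by specializing to weakly holomorphic $\calP$, where the hypothesis on cusp forms kills the potential Petersson contribution and the remaining finite expression is exactly $\deg(Z_1(W))\bigl(\log(C)-\tfrac{1}{4}\CT\langle\calP,\Theta_P\otimes\calE_N^{(0)}\rangle\bigr)$.
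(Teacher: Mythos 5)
Your strategy is the one the paper actually follows (Schofer's method: regularized theta lift, seesaw plus Siegel--Weil at the CM cycle, Stokes' theorem, assembly over all $d$ archimedean places, with weak holomorphy killing the $L'(\xi(\calP),W,0)$ term), but two steps as written do not go through. The most serious is the Siegel--Weil step. The average of $\Theta_N(\tau,z,h)$ over $Z_1(W)$ equals $\tfrac{\deg Z_1(W)}{2}$ times the value at $s=0$ of the \emph{coherent} Eisenstein series $E_N(\tau,s,\tilde\kappa_N)$ of weight $(-1,1,\dots,1)$ attached to the genuine global space $W^\perp$ (signature $(0,2)$ at $\sigma_1$) --- not of the incoherent series of parallel weight $1$, whose value at $s=0$ vanishes identically. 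Your intermediate integral $\int^{\mathrm{reg}}\langle f_\calP,\Theta_P\otimes\calE_N(\tau,0)\rangle\,d\mu(\tau)$ is therefore literally zero as written. The incoherent series enters only through its \emph{derivative} at $s=0$, via the lowering-operator identity \eqref{eq:eisrel2}: one rewrites the coherent kernel as $\bar\partial$ applied to the $\tau_1$-nonholomorphic part $\calE_N^{(1)}$ of $E_N'(\tau,0,\kappa_N)$, integrates by parts so that $\bar\partial$ falls on $f_\calP$ and produces $\delta(f_\calP)$ (Theorem \ref{thm:fund}), and only after summing over all $j$ does the decomposition $\calE_N=\sum_{j\ge 0}\calE_N^{(j)}$ separate a modular piece (unfolding to the $L'$-term) from the holomorphic piece $\calE_N^{(0)}$ whose constant term survives (Theorem \ref{thm:fundtr}). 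Without this mechanism there is no route from the vanishing value to the nonzero answer.

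Second, you pair $f_\calP$ directly against $\Theta_L$, but the regularized integral over $\tilde\Gamma_\infty\backslash\H^d$ does not converge at the harmonic parameter; the paper must first pass to the cuspidal part $\tilde\Theta_L=\Theta_L-E_L(\tau,s_0,\tilde\kappa)$ of Section \ref{sect:5}. This subtraction is not only a convergence device: the term $-2\calE_\LL^{(1)}$ it injects is exactly what makes the boundary contribution at $v_1\to 0$ vanish in the Stokes argument (Lemma \ref{lem:gr1}), and its holomorphic part contributes $-2\CT\langle\calP,\calE_\LL^{(0)}\rangle$ to the CM value, which cancels against the $+2\CT\langle\calP,\calE_\LL^{(0)}\rangle$ in the Petersson-metric normalization of Theorem \ref{thm:bop}(iii). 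It is this cancellation, not absorption into the gauge constant $C$ (which in the paper arises solely from the locally constant ratio between the $F$-rational function $R_\calP$ and the metrized lift $\Psi^{(j)}$), that leaves the clean expression. Finally, your starting normalization $-2\log|R_\calP|=\int^{\mathrm{reg}}$ would yield the prefactor $-\tfrac12$ rather than $-\tfrac14$ (the paper's identity is $-2\log\|\Psi^{(j)}\|^2_{\Pet}=\tilde\Phi^{(j)}+2\CT\langle\calP,\calE_\LL^{(0)}\rangle$), so the constant in your conclusion is asserted rather than derived.
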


The constant $C$ can often be determined by specifying the value of $R_\calP$ at some special CM point (see Section \ref{sect:8.3}).
As a consequence, we find that the prime factors of the norm of the CM value are small (see Corollary \ref{cor:s1}), in analogy to the situation for the classical $j$-function.

\begin{corollary}
\label{cor:intro1}
Let $S(N)$
be the set of finite primes $\mathfrak p$ of $F$ for which  $N_{\mathfrak p}$ is not
  unimodular, and let $S(\calP)$ be the set of totally positive $m\in \partial_F^{-1}$ such that
  $c(m)\ne 0$.
We have
\begin{align*}
\log|\norm_{\sigma_1(F)/\Q} R_\calP(Z_1(W))| &=\deg(Z_1(W))\log(C)+
\sum_{\text{$p$ prime}} \alpha_p \log(p)
\end{align*}
with coefficients $\alpha_p\in \Q$, and $\alpha_p=0$ unless  there is a prime $\mathfrak p$  of $F$ above $p$  which belongs to $S(N)$ or
$\mathfrak p| (m-Q(\nu))\partial_F$ for some $m \in S(\calP)$ and $\nu \in P'$ with $m -Q(\nu)\gg 0$ (totally positive).
In particular, $\alpha_p=0$ unless $p \le \max( M(\calP),  |N'/N|, D)$, where
$$
  M(\calP)=\max\{\norm(m)D;\; m\in S(\calP)\}.
$$
\end{corollary}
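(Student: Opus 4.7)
The plan is to deduce the corollary directly from Theorem \ref{thm:intro1} by unfolding the constant term $\CT\langle\calP,\Theta_P\otimes\calE_N^{(0)}\rangle$ and tracking which rational primes can appear as logarithms in the Fourier coefficients of $\calE_N^{(0)}$.

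First I would expand
\begin{align*}
\CT\langle\calP,\Theta_P\otimes\calE_N^{(0)}\rangle
=\sum_{m\in S(\calP)} c(m)\sum_{\substack{\nu\in P'\\ m-Q(\nu)\gg 0}} r_P(\nu)\,a_N(m-Q(\nu)),
\end{align*}
where $r_P(\nu)$ records the contribution of $\nu$ to $\Theta_P$ and $a_N(t)$ denotes the $t$-th Fourier coefficient of $\calE_N^{(0)}$. This sum is finite because $P$ is totally positive definite, so $m-Q(\nu)\gg 0$ forces $Q(\nu)$ to be bounded at each archimedean place by the corresponding component of $m$.

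Next I would invoke the explicit formula for $a_N(t)$ established in Section \ref{sect:Eisenstein} for the holomorphic part of an incoherent weight one Hilbert Eisenstein series. The crucial structural output is that $a_N(t)$ is a finite $\Q$-linear combination of $\log(\norm(\mathfrak{p}))$ over primes $\mathfrak{p}$ of $F$, in which the $\mathfrak{p}$-term vanishes unless $\mathfrak{p}\in S(N)$ or $\mathfrak{p}\mid t\partial_F$. This local support property is where essentially all the work sits; it relies on the explicit local Whittaker computations at each finite place, in the spirit of \cite{Scho,BY,BY1}. Assuming it, substituting into the expansion above and combining with Theorem \ref{thm:intro1} immediately produces an identity of the shape $\deg(Z_1(W))\log C+\sum_p \alpha_p\log p$ in which the nonzero $p$ lie under a prime $\mathfrak{p}$ that is either in $S(N)$ or divides $(m-Q(\nu))\partial_F$ for some admissible pair $(m,\nu)$; this gives the first assertion.

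For the effective bound I would handle the three possible sources of $\mathfrak{p}$ separately. If $\mathfrak{p}\in S(N)$, then $p\le\norm(\mathfrak{p})\le |N'/N|$; if $\mathfrak{p}\mid\partial_F$, then $p\mid D$, so $p\le D$; finally, if $\mathfrak{p}\mid(m-Q(\nu))$ with $m\in S(\calP)$, $\nu\in P'$ and $m-Q(\nu)\gg 0$, then $0\le\sigma_i(m-Q(\nu))\le\sigma_i(m)$ at every archimedean place, so $|\norm(m-Q(\nu))|\le\norm(m)$, and hence $p\le\norm(\mathfrak{p})\le|\norm(m-Q(\nu))|\,D\le\norm(m)\,D\le M(\calP)$. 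The principal obstacle is the structural statement about the support of the $a_N(t)$ used in the second step; everything else is bookkeeping once that input is in hand.
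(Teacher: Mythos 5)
Your proposal is correct and follows essentially the same route as the paper's own proof (given for the general version, Corollary \ref{cor:s1}): unfold the constant term $\CT\langle\calP,\Theta_P\otimes\calE_N^{(0)}\rangle$ into a finite sum of representation numbers of $P$ times Fourier coefficients $\beta_N^*(m-Q(\nu),\cdot)$ of $\calE_N^{(0)}$, and invoke the support property of those coefficients from Proposition \ref{prop4.9} and Corollary \ref{cor4.10} --- exactly the structural input you isolate, and it is indeed established in Section \ref{sect:Eisenstein}. The one point you leave implicit is that the terms with $m-Q(\nu)=0$, whose coefficient $\beta_N(0,\nu')$ is a constant rather than a rational multiple of some $\log p$, are absent precisely because $Z_1(W)$ and $Z_1(\calP)$ are assumed disjoint, which forces the corresponding representation numbers $r_P(m,\nu)$ to vanish.
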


In Section \ref{sect:8}, we consider the Shimura curve $X$ associated to the triangle group $G_{2,3,7}$ as an example.
It is a genus zero curve with a number of striking properties.
For instance, the minimal quotient area of a discrete subgroup of $\operatorname{PSL}_2(\R)$ is $1/42$, and it is only attained by the triangle group $G_{2,3,7}$.
Elkies constructed a generator $t$ of the function field of $X$ and computed its values at certain CM points, see \cite[Section 5.3]{El} and \cite[Section 2.3]{El2}.

Let $F =\mathbb Q(\zeta_7)^+$ be the maximal totally real subfield of the cyclotomic field $\mathbb Q(\zeta_7)$, where $\zeta_7=e^{2\pi i/7}$.
Then $F$ is a cubic Galois extension of $\Q$ which is generated by $\alpha=\zeta_7+\zeta_7^{-1}$.
%
Let $B$ be the (up to isomorphism unique) quaternion algebra over
$F$ which is split at the first infinite and all finite places and ramified at the second and the third infinite place.
Let $\OO_B$ be a fixed maximal order of $B$, and let $\OO_B^1$ be the group of norm $1$ elements.
The Shimura curve $X$ can be described as the quotient $X=\OO_{B}^1 \backslash \mathbb H$. It can also be described as a Shimura variety $X_{K,1}$ associated to the three-dimensional quadratic space over $F$ given by the trace zero elements of $B$ with the reduced norm as the quadratic form.

The elliptic fixed points $P_3$, $P_4$, $P_7$ of orders $3$, $2$, and $7$ are rational CM points of discriminant $-3$, $-4$, and $-7$, respectively. Elkies considered the rational function $t$ on $X$ that has a double zero at $P_4$, a septuple pole at $P_7$, and that takes the value $1$ at $P_3$. Here we show that this function is a regularized theta lift in the sense of Theorem~\ref{thm:bop}. Employing Theorem \ref{thm:intro1}, we verify some of Elkies' computations and determine some further CM values of $t$. The results are summarized in Table~\ref{tab:1} at the end of this paper.
For instance, for the CM point $P_{11}$ of discriminant $-11$, we obtain that
\[
t(P_{11})=\pm
\frac{ 7^{3}\cdot  11\cdot 43^{2}\cdot 127^{2} \cdot 139^{2}\cdot 307^{2}\cdot
  659^{2}}{3^3\cdot 13^{7}\cdot 83^{7}},
\]
agreeing with the computation of Elkies.

Besides CM values of meromorphic modular forms that arise as liftings of weakly holomorphic Whittaker forms, we compute CM values of automorphic Green functions associated to special divisors (see Section \ref{sect:6} and Theorem \ref{thm:fundtr} in Section \ref{sect:7}). In this more general situation the CM value is essentially given by the sum of two terms.
The first term is the right hand side of the formula of Theorem \ref{thm:intro1}. The second term is the central derivative $L'(\xi(\calP), W,0)$ of a Rankin convolution $L$-function of the theta function $\Theta_P$ and a Hilbert cusp form $\xi(\calP)$ of parallel weight $1+n/2$ associated to the principal part polynomial $\calP$.
Similarly as in \cite[Section~5]{BY}, the first term should be the negative of a finite intersection pairing, so that the second term should be the height pairing of an arithmetic special divisor and a CM cycle.
Note that our approach also gives a formula for the integrals of automorphic Green functions analogously to \cite{Ku:Integrals}, \cite{BK}, see Remark \ref{rem:integrals}.

This paper is organized as follows: In Section \ref{sect:2} we set up the notation and define the Shimura variety and its special cycles. In Section \ref{sect:3} we recall from \cite{Br2} some facts on Whittaker forms, and in Section \ref{sect:4} we collect some material on Siegel theta functions, Eisenstein series, and the Siegel--Weil formula. In particular we carefully analyze incoherent Hilbert Eisenstein series. Section \ref{sect:5} deals with the regularized theta lift of Whittaker forms, and Section \ref{sect:6} contains a preliminary result on CM values of automorphic Green functions (Theorem \ref{thm:fund}). In Section \ref{sect:7} we put together the computations at the different archimedian places to obtain our main result, Theorem \ref{thm:fundtr}. It is convenient to formulate it using the concept of incoherent adelic quadratic spaces.
Finally, in Section \ref{sect:8} we consider the example studied in \cite[Section 5.3]{El}.

It is a pleasure to dedicate this paper to Steve Kudla on the occasion of his 60th birthday.
It is clear that this paper is greatly influenced by many beautiful ideas appearing in his work.
We thank him for his constant support. Moreover, we thank the referee, N. Elkies, and  J. Voight for useful comments.

\section{Quadratic spaces and Shimura varieties}
\label{sect:2}

We use the same setup and the same notation as in \cite{Br2}.
Let $F$ be a totally real number field of degree $d$ over $\Q$. We write $\calO_F$ for the ring of integers in $F$, and write $\partial=\partial_F$ for the different ideal of $F$.
The discriminant of $F$ is denoted by $D=\norm(\partial)=\#\calO_F/\partial$.
Let $\sigma_1,\dots,\sigma_d$ be the different embeddings of $F$ into $\R$.
We write $\A_F$ for the ring of adeles of $F$ and $\hat F$ for the subring of finite adeles.

Let $(V,Q)$ be a non-degenerate quadratic space of dimension $\ell=n+2$ over $F$.
We put $V_{\sigma_i}=V\otimes_{F,\sigma_i}\R$ and identify
$V(\R)=V\otimes_\Q\R=\bigoplus_i V_{\sigma_i}$. We assume that $V$ has signature
\[((n,2),(n+2,0),\dots, (n+2,0)),
\]
that is, $V_{\sigma_1}$ has signature $(n,2)$ and $V_{\sigma_i}$ has signature $(n+2,0)$ for $i=2,\dots ,d$.
Throughout we assume that $V$ is anisotropic over $F$ or has Witt rank over $F$ less than $n$. By the assumption on the signature this is always the case if $d>1$ or $n>2$.

Let $\GSpin(V)$ be the `general' Spin group of $V$, that is, the group of all invertible elements $g$ in the even Clifford algebra of $V$ such that $gVg^{-1}=V$.
It is an algebraic group over $F$, and the vector representation gives rise to an exact sequence
\[
1\longrightarrow F^\times\longrightarrow  \GSpin(V)\longrightarrow  \SO(V)\longrightarrow 1.
\]
We consider the algebraic group $H=\Res_{F/\Q}\GSpin(V)$ over $\Q$ given by Weil restriction of scalars. So
for any $\Q$-agebra $R$, we have $H(R)=\GSpin(V)(R\otimes_\Q F)$. In particular,
$H(\Q)$ can be identified with $\GSpin(V)(F)$.

We realize the hermitean symmetric space corresponding to $H$ as  the Grassmannian $\D$ of oriented negative definite $2$-dimensional subspaces of $V_{\sigma_1}$.
Note that $\D$ has two components corresponding to the two possible choices of the orientation.

For $K\subset H(\hat\Q)$ compact open we consider the Shimura variety
\begin{align}
X_K:=H(\Q)\bs (\D\times H(\hat \Q))/K.
\end{align}
It is a complex quasi-projective variety of dimension $n$, which has a canonical model over $\sigma_1(F)$, see \cite{Shih}.
It is projective if and only if $V$ is anisotropic over $F$.


Let $L\subset V$ be an $\calO_F$-lattice, that is, a finitely generated $\calO_F$-submodule such that $L\otimes_{\calO_F} F=V$.
We assume that $L$ is even, that is, $Q(L)\subset \partial^{-1}$.
Then $Q_\Q(x)=\tr_{F/\Q}Q(x)$ defines an even $\Z$-valued quadratic form on $L$.
Let $L'$ be the $\Z$-dual lattice of $L$ with respect to the quadratic form $Q_\Q$. It is also an $\calO_F$-lattice.
The finite $\calO_F$-module $L'/L$ is called the discriminant group of $L$.
The lattice $L$ is called unimodular if $L'=L$.

We write $\hat L= L\otimes_\Z \hat\Z$, where $\hat \Z=\prod_p \Z_p$.
Recall that $H(\hat \Q)$ acts on the set of lattices $M\subset V$ by $M\mapsto h M:= (h\hat M)\cap V(F)$.
This action induces an isomorphism $M'/M\to (h M)'/(h M)$, and $hM$ lies in the same genus as $M$.
Throughout we assume that the compact open subgroup $K\subset H(\hat\Q)$ fixes the lattice $L\subset V$ and acts trivially on $L'/L$.

\subsection{Special cycles}

\label{sect:2.1}

Here we recall the special cycles on $X_K$ which were introduced by Kudla in \cite{Ku:Duke}.
Let $W\subset V$ be a subspace of dimension $r$ which is defined over $F$ and totally positive definite.
We write
$W^\perp$ for the orthogonal complement of $W$ in $V$ and $H_W$ for the pointwise
stabilizer of $W$ in $H$. So $H_W\cong \Res_{F/\Q}\GSpin(W^\perp)$. The
sub-Grassmannian
\begin{align*}
\D_{W}=\{ z\in \D;\;\text{$z\perp W$}\}
\end{align*}
defines an analytic submanifold of $\D$. For $h\in H(\hat\Q)$ we consider
the natural map
\begin{align*}
\label{eqY4.3}
H_W(\Q)\bs \D_W\times H_W(\hat\Q)/(H_W(\hat \Q)\cap hKh^{-1})
\longrightarrow X_K,\quad (z,h_1)\mapsto (z,h_1 h).
\end{align*}
Its image defines a cycle   $Z(W,h)$ of codimension $r$ on $X_K$, which is rational
over $\sigma_1(F)$, see \cite{Ku:Duke}.

In the present paper we are interested in two particular cases of this construction.
First, if $r=n$, then $W$ is a maximal totally positive definite subspace and $W^\perp$ is definite of signature
\[
((0,2),(2,0),\dots,(2,0)).
\]
The Grassmannian $\D_W$ consists of $2$ points $z_W^\pm$, given by $W^\perp\otimes_{F,\sigma_1} \R$ with the two possible choices of the orientation.
The group $\GSpin(W^\perp)$ can be identified with $k^\times$ for a totally imaginary quadratic extension $k$ of $F$. For this reason
the corresponding dimension $0$ cycle
$Z(W)=Z(W,1)$ is called the CM cycle associated to $W$.

Second, if $x\in V$ is a vector of totally positive norm, we may consider the one-dimensional subspace $Fx\subset V$.
For $h\in H(\hat\Q)$ we obtain a divisor $Z(Fx,h)$ on $X_K$. We consider certain sums of these divisors, called weighted divisors.
They generalize Heegner divisors on modular curves.
Let $m\in F$ be totally positive, and let
$\varphi\in S(V(\hat F))^K$ be a $K$-invariant Schwartz function.
If there is an $x_0\in V(F)$ with $Q(x_0)=m$, we define the weighted divisor
\begin{align*}
Z(m,\varphi)= \sum_{h\in H_{x_0}(\hat\Q)\bs  H(\hat\Q)/K } \varphi(h^{-1} x_0) Z(Fx_0,h).
\end{align*}
The sum is finite, and $Z(m,\varphi)$ is a divisor on $X_K$ with complex coefficients.
If there is no $x_0\in V(F)$ with $Q(x_0)=m$, we put $Z(m,\varphi)=0$.
If $\mu\in L'/L$ is a coset, and $\chi_\mu=\Char(\mu+\hat L) \in S(V(\hat F))^K$ is the characteristic function, we briefly write
\[Z(m,\mu):=Z(m,\chi_\mu).
\]

\section{The Weil representation and Whittaker forms}

\label{sect:3}

Let $\H$ be the upper complex half plane.
We use $\tau=(\tau_1,\dots,\tau_d)$ as a standard variable on $\H^d$ and put $u_i=\Re(\tau_i)$, $v_i=\Im(\tau_i)$. For a $d$-tuple $(w_1,\dots,w_d)$ of complex numbers, we put
$\tr(w)= \sum_{i} w_i$ and
$\norm(w)= \prod_{i} w_i$.
We view $\C^d$ as a $\R^d$-module by putting $\lambda w=(\lambda_1 w_1,\dots,\lambda_d w_d)$ for $\lambda=(\lambda_1,\dots,\lambda_d)\in \R^d$.
For $x\in F$ we briefly write $x_i=\sigma_i(x)$ and identify $x$ with its image $(x_1,\dots,x_d)\in \R^d$. The usual trace and norm of $x$ coincide with the above definitions. Moreover, the inclusion $F\to\R^d$ defines an $F$-vector space structure on $\C^d$.

\subsection{The Weil representation}

We are interested in certain  vector valued Hilbert modular forms for the group
$G=\Res_{F/\Q} \Sl_2$. For $g\in  \Sl_2(F)\cong G(\Q)$, we briefly write $g_i=\sigma_i(g)$. So the image of $g$ in $G(\R)\cong \Sl_2(\R)^d$ is given by $(g_1,\dots,g_d)$.
The group $G(\R)$ acts on $\H^d$ by fractional linear transformations.
We denote by $\tilde G_\A$ the twofold metaplectic cover of $G(\A)$. Let  $\tilde G_\R$ be the full inverse image  in $\tilde G_\A$ of $G(\R)$.
We will frequently realize $\tilde G_\R$ as the group of pairs
\[
\left( g, \phi(\tau)\right),
\]
where $g=\kabcd\in G(\R)$ and $\phi(\tau)$ is a holomorphic function on $\H^d$ such that $\phi(\tau)^2=\norm(c\tau+d)$.
The group law is defined in the usual way.
Let $\tilde \Gamma$ be the full inverse image in $\tilde G_\R$ of the Hilbert modular group
\[
\Gamma=\Sl_2(\calO_F)\subset G(\R).
\]
It follows from Vaserstein's theorem \cite{Va} that $\tilde\Gamma$ is generated by the elements
\begin{align*}
T_b &= \left( \zxz{1}{b}{0}{1 }, 1\right), \qquad b\in \calO_F,\\
S &  = \left( \zxz{0}{-1}{1}{0 }, \sqrt{\norm(\tau)}\right),\\
N &  =  \left( \zxz{1}{0}{0}{1 }, -1 \right).
\end{align*}
By slight abuse of notation, we write
\begin{align}
\tilde\Gamma_\infty:=\{T_b\cdot (1,\pm 1);\quad b\in \calO_F\}
\end{align}
for the subgroup of translations of $\tilde \Gamma$. Note that $\Gamma_\infty$ is not the full stabilizer of the cusp $\infty$ when $d>1$.

Let $L\subset V$ be an $\calO_F$-lattice.
For $\mu \in L'/L$ we write $\chi_\mu=\Char(\mu+\hat L)\in S(V(\hat F))$ for the characteristic function of the coset.
Associated to the reductive dual pair $(\Sl_2, \Orth(V))$ there is a Weil representation $\omega=\omega_\psi$ of $\tilde G_\A$ on the Schwartz space $S(V(\A_F))$, where $\psi$ is the standard additive  character of $F\bs \A_F$ with $\psi_\infty (x)= e(\tr x)$ \cite{We1}.
The subspace
\[
S_L= \bigoplus_{\mu\in L'/L}  \C\chi_\mu \subset S(V(\hat F))
\]
is preserved by the action of $\widetilde{\Sl}_2(\hat \calO_F )$, the full inverse image in $\tilde G_\A$ of $\Sl_2(\hat\calO_F)\subset G(\hat \Q)$.
The canonical splitting $G(F)\to  \tilde G_\A$ defines a homomorphism
\[
\tilde \Gamma \longrightarrow   \widetilde{\Sl}_2(\hat \calO_F ), \quad \gamma \mapsto \hat \gamma,
\]
where $\hat \gamma$  is the unique element such that $\gamma \hat \gamma$ is in the image of $G(F)$.
This induces a representation $\rho_L$ of $\tilde \Gamma$ on $S_L$ by
\[
\rho_L(\gamma)\varphi = \bar \omega(\hat \gamma),\quad \gamma\in \tilde\Gamma.
\]
See \cite[Section 3.2]{Br2}, for explicit formulas for the action of $T_b$, $S$, $N$.
We denote the standard $\C$-bilinear pairing on $ S_L$ (the $L^2$ bilinear pairing) by
\begin{align}
\langle a,b\rangle=\sum_{\mu\in L'/L} a_\mu b_\mu
\end{align}
for $a,b \in S_L$.
The representation $\rho_L$ is unitary, that is, we have $\langle \bar\rho_L a,\rho_L b\rangle = \langle a,b\rangle$.

\subsection{Whittaker forms}
\label{sect:3.2}

Here we recall some facts on Whittaker forms, see \cite[Section 4]{Br2} for details.
Let $k=(k_1,\dots,k_d)\in (\frac{1}{2}\Z)^d$ be a weight. Throughout we assume that $k\equiv (\frac{\ell}{2},\dots,\frac{\ell}{2})\pmod{\Z^d}$.
%
We define a Petersson slash operator in weight $k$ for the representation $\rho_L$ on functions $f:\H^d\to S_L$ by
\[
(f\mid_{k,\rho_L}(g,\phi)) (\tau)= (c_1\tau_1+d_1)^{-k_1+\ell/2}\cdots (c_d\tau_d+d_d)^{-k_d+\ell/2} \phi(\tau)^{-\ell} \rho_L(g,\phi)^{-1}f(g\tau),
\]
where $(g,\phi)\in \tilde G_\R$ and $g=\kabcd$.
The Petersson slash operator for the dual representation  $\bar\rho_L$ is defined analogously. We write $S_{k,\rho_L}$ for the space of vector valued Hilbert cusp forms of weight $k$ for $\tilde \Gamma$ with representation $\rho_L$.

We have the usual hyperbolic Laplace operators in weight $k$ acting on smooth functions on $\H^d$. They are given by
\begin{align}
\label{defdelta}
\Delta_k^{(j)} = -v_j^2\left( \frac{\partial^2}{\partial u_j^2}+ \frac{\partial^2}{\partial v_j^2}\right) + ik_j v_j\left( \frac{\partial}{\partial u_j}+i \frac{\partial}{\partial v_j}\right)
\end{align}
for $j=1,\dots,d$.
Moreover, we have the Maass lowering operators
\begin{align*}
L^{(j)}  &= -2i v_j^2 \frac{\partial}{\partial\bar{\tau_j}},
\end{align*}
which lower the weight of an automorphic form in the $j$-th component by $2$.

We recall some properties of Whittaker functions, see  \cite[Chap.~13 pp.~189]{AS}  or \cite[Vol.~I Chap.~6 p.~264]{B1}.
%
Kummer's confluent hypergeometric function is defined by
\begin{align} \label{kummer1}
 M(a,b, z) = \sum_{n=0}^\infty \frac{(a)_n }{(b)_n} \frac{z^n}{n!},
\end{align}
where
$ (a)_n = \Gamma(a+n)/\Gamma(a)$ and $(a)_0=1$.
The Whittaker functions are defined by
\begin{align}
\label{eq:m1}
M_{\nu,\mu}(z)&= e^{-z/2} z^{1/2+\mu} M(1/2+\mu-\nu, 1+2\mu, z),\\
\label{eq:w1}
W_{\nu,\mu}(z)&= \frac{\Gamma(-2\mu)}{\Gamma(1/2-\mu-\nu)}M_{\nu,\mu}(z)
+\frac{\Gamma(2\mu)}{\Gamma(1/2+\mu-\nu)}M_{\nu,-\mu}(z).
\end{align}
They are linearly independent solutions of the Whittaker differential equation.
The $M$-Whittaker function has the asymptotic behavior
\begin{align}\label{Masy1}
M_{\nu,\,\mu}(z) &= z^{\mu+1/2}(1+O(z)), \qquad z\to 0,\\
\label{Masy2}
M_{\nu,\,\mu}(z) &= \frac{\Gamma(1+2\mu)}{\Gamma(\mu-\nu+1/2)}e^{z/2}z^{-\nu} ( 1+O(z^{-1})) ,\qquad z\to \infty,
\end{align}
while $W_{\nu,\,\mu}(z)$ is exponentially decreasing for real $z\to \infty$ and behaves like a constant times $z^{-\mu+1/2}$ as $z\to 0$.
For convenience we put for $s\in\C$ and $v_1\in\R$:
\begin{align}
\label{calM}
\calM_s(v_1)&=
|v_1|^{-k_1/2} M_{ \sgn(v_1)k_1/2,\,s/2}(|v_1|)
\cdot
e^{-v_1/2}.
\end{align}
The special value at
$s_0=1-k_1$
is of particular interest.
If $v_1<0$  we have
\begin{align}
\label{Mspecial}
\calM_{s_0}(v_1)&=  \Gamma(2-k_1)
\left(1-\frac{\Gamma(1-k_1,4\pi |m_1| v_1)}{\Gamma(1-k_1)}\right)
e^{-v_1}
\end{align}
where $\Gamma(a,x)=\int_x^\infty e^{-t}t^{a-1}dt$ denotes the incomplete gamma function.

A {\em Whittaker form} of weight $k$ and parameter $s$ (for $\tilde \Gamma$, $\bar \rho_L$, and $\sigma_1$)
is a finite linear combination of the functions
\begin{align}
\label{eq:deffms}
f_{m,\mu}(\tau,s):=
C(m,k,s)\calM_s(-4\pi m_1 v_1)e(\tr (-m\bar \tau) ) \chi_\mu
\end{align}
for $\mu\in L'/L$, $m\in  \partial^{-1}+Q(\mu)$, and $m\gg 0$. Here $C(m,k,s)$ denotes the normalizing factor
\begin{align}
\label{eq:defcms}
C(m,k,s):=\frac{(4\pi m_2)^{k_2-1}\cdots (4\pi m_d)^{k_d-1}}{\Gamma(s+1)\Gamma(k_2-1)\cdots \Gamma(k_d-1)}.
\end{align}
A {\em harmonic Whittaker form} is a Whittaker form with parameter $s_0=1-k_1$.
We denote  by $H_{k,\bar\rho_L}$ the $\C$-vector space of harmonic Whittaker forms of weight $k$ for $\tilde \Gamma$ and $\bar \rho_L$.

So any $f\in H_{k,\bar\rho_L}$  is a finite linear combination of the functions
\begin{align}
\label{eq:fs0}
f_{m,\mu}(\tau)&:=f_{m,\mu}(\tau,s_0)\\
\nonumber
&\phantom{:}= C(m,k,s_0)\Gamma(2-k_1)
\left(1-\frac{\Gamma(1-k_1,4\pi m_1 v_1)}{\Gamma(1-k_1)}\right)e^{4\pi m_1 v_1}e(\tr(-m\bar\tau))\chi_\mu
\end{align}
for $\mu\in L'/L$, $m\in  \partial^{-1}+Q(\mu)$, and $m\gg 0$.
A harmonic Whittaker form $f$ satisfies $\Delta_k^{(1)} f =0$ and is antiholomorphic  in the  variables $\tau_2, \dots,\tau_d$.

We define the {\em dual weight} for $k$ by $\kappa= (2-k_1,k_2,\dots,k_d)$.
We consider the differential operator $\delta=\delta_k^{(1)}$ on functions $f:\H^d\to  S_L$ given by
\begin{align}
\label{def:delta}
\delta(f)=v_1^{k_1-2} \overline{L^{(1)} f(\tau)}.
\end{align}
If $f\in H_{k,\bar\rho_L}$, then $\delta (f)$ is a holomorphic function satisfying
$f(T_b\tau) =  \rho_L(T_b) f(\tau)$ for all $b\in \calO_F$.
In particular, we have
\begin{align}
\label{eq:delta}
\delta (f_{m,\mu})(\tau)= \frac{(4\pi m_1)^{\kappa_1-1}\cdots (4\pi m_d)^{\kappa_d-1}}{\Gamma(\kappa_1-1)\cdots \Gamma(\kappa_d-1)}
e(\tr(m\tau))\chi_\mu.
\end{align}

For the rest of this section we assume that $\kappa_j\geq 3/2 $ for $j=1,\dots,d$.
We define an operator $\xi=\xi_k^{(1)}$ taking $H_{k,\bar\rho_L}$ to $S_{\kappa,\rho_L}$ by
\begin{align}
\label{def:xi}
\xi (f)=\sum_{\gamma\in \tilde\Gamma_\infty\bs \tilde\Gamma} \delta (f)\mid_{\kappa,\rho_L}\gamma.
\end{align}
When $\kappa_j>2$ for $j=1,\dots,d$, the Poincar\'e series on the right hand side converges normally
and defines a holomorphic cusp form. When $\kappa_j\geq 3/2$ it has to be regularized using ``Hecke summation''.
According to \cite[Proposition 4.3]{Br2}, the map $\xi $ is surjective.

A Whittaker form $f$ is called {\em weakly holomorphic} if it is harmonic and satisfies $\xi (f)=0$.
We denote by $M^!_{k,\bar\rho_L}$ the subspace of weakly holomorphic Whittaker forms in $H_{k,\bar\rho_L}$.
So we have the exact sequence
\begin{align}
\label{ex-sequ}
\xymatrix{ 0\ar[r]& M^!_{k,\bar\rho_L} \ar[r]& H_{k,\bar\rho_L}
\ar[r]^{\xi }& S_{\kappa,\rho_L} \ar[r] & 0 }.
\end{align}

Recall that the Petersson scalar product of $f,g\in S_{\kappa,\rho_L}$ is given by
\begin{align}
(f, g)_{Pet}= \frac{1}{\sqrt{D}}\int_{\tilde \Gamma\bs \H^d} \langle f,\bar g \rangle v^\kappa \,
d\mu(\tau),
\end{align}
where $d\mu (\tau)=\frac{du_1\,dv_1}{v_1^2}\dots \frac{du_d\,dv_d}{v_d^2}$ is the invariant measure on $\H^d$, and $v^\kappa$ is understood in multi-index notation.
We define a bilinear pairing between the spaces $S_{\kappa,\rho_L}$  and $H_{k,\bar\rho_L}$ by putting
\begin{equation}
\label{defpair}
\{g,f\}=\big( g,\, \xi (f)\big)_{Pet}
\end{equation}
for $g\in S_{\kappa,\rho_L}$ and $f\in H_{k,\bar\rho_L}$. The pairing vanishes when $f$ is weakly holomorphic, and the induced pairing between $S_{\kappa,\rho_L}$ and $ H_{k,\bar\rho_L}/M^!_{k,\bar\rho_L}$
is non-degenerate.
The pairing can also be computed using Fourier expansions.
We define the {\em principal part} of
\[
f=\sum_{\mu\in L'/L} \sum_{m\gg 0} c(m,\mu)f_{m,\mu}(\tau)\in H_{k,\bar\rho_L}
\]
to be the $S_L$-valued Fourier polynomial
\begin{align}
\label{eq:prinpart}
\calP(f)=\sum_{\mu\in L'/L} \sum_{m\gg 0} c(m,\mu) q^{-m}\chi_\mu,
\end{align}
where $q^m=e(\tr(m\tau))$. Then for $g\in S_{\kappa,\rho_L}$ with Fourier expansion $g=\sum_{n,\nu}b(n,\nu) q^n\chi_\nu$ we have
\begin{equation}
\label{pairalt}
\{g,f\}= \CT(\left\langle \calP(f) ,g\right\rangle)=\sum_{\mu\in L'/L} \sum_{m\gg 0}  c(m,\mu) b(m,\mu).
\end{equation}
Here $\CT(\cdot)$ denotes the constant term of a holomorphic Fourier series. With this formula it can be checked whether a harmonic Whittaker form is weakly holomorphic.

\section{Siegel theta functions and Eisenstein series}
\label{sect:Eisenstein}
\label{sect:4}

Let $(V, Q)$ be the quadratic space as in Section  \ref{sect:2} and let $L$ be an even $\OO_F$-lattice. Let $\psi_\Q$ be the canonical unramified additive character of $\Q \backslash \A$ such  that $\psi_\infty(x) = e(x)$, and let $\psi=\psi_\Q \circ \tr_{F/\Q}$. Let $\omega=\omega_{V, \psi}$ be the Weil representation of $\tilde G_\A$ on $S(V(\A))$.  In this section, we will review  Siegel theta functions, coherent and incoherent Eisenstein series associated to $L$, and their their relations. We will also study the Fourier expansion of the derivative of the incoherent Eisenstein series. Let $\chi=\chi_V=((-1)^{\frac{\ell(\ell-1)}2} \det V, \cdot )_\A$ be the quadratic Hecke character of $F$ associated to $V$,  where $\det V$ is the determinant of the Gram matrix of
the bilinear form on $V$ with respect to a fixed basis. Let $s_0 =\frac{n}2 =\frac{1}2(\dim  V -2)$.
Throughout we assume that $V$ is anisotropic over $F$ or has Witt rank over $F$ less than $n$.
We put
\begin{align*}
\kappa&=\kappa_V =(\frac{n+2}2, \cdots, \frac{n+2}2), \\
\tilde\kappa &=\tilde\kappa_V=(\frac{n-2}2, \frac{n+2}2, \cdots, \frac{n+2}2).
\end{align*}

\subsection{Eisenstein series}

Let $P=N M$ be the standard Borel subgroup of $G=\SL_2(F)$ with
$$
P=N M =\{ n(b) m(a);\; a \in F^\times, b \in F\}.
$$
Here
$$
J=\zxz {0} {-1} {1} {0}, \quad n(b) =\zxz {1} {b} {0} {1}, \quad
m(a) =\zxz {a} {0} {0} {a^{-1}}.
$$
Let $I(s, \chi) =\hbox{Ind}_{\tilde P_\A}^{\tilde G_\A} \chi |\cdot|^s$ be the induced representation
of $\tilde{G}_\A$ (see for example \cite{KRYbook}, \cite{KR1}, \cite{Ku:Integrals}). A section $\Phi \in I(s, \chi)$ is factorizable if
$\Phi=\otimes \Phi_w$ with local sections $\Phi_w \in I(s, \chi_w)$ at the places $w$ of $F$, and it is standard if its restriction to the standard maximal compact subgroup of $\tilde G_\A$ is independent of $s$.
For a standard factorizable section $\Phi =\otimes \Phi_w\in I(s,
\chi)$, the Eisenstein series
\begin{equation}
E(\tilde g, s, \Phi) =\sum_{\gamma \in P \backslash \SL_2(F)}
\Phi(\gamma \tilde g, s)
\end{equation}
is absolutely convergent for $\Re s \gg 0$, has a meromorphic
continuation to the whole complex planes with finitely many
possible poles, and satisfies a functional equation
\begin{equation}
E(\tilde g, s, \Phi) =E(\tilde g, -s, M(s)\Phi),
\end{equation}
where
$$
M(s)\Phi(g,s) = \int_{\A_F} \Phi(J n(b)g, s) db
$$
is the intertwining operator.
The Eisenstein series has a Fourier expansion
$$
E(\tilde g, s, \Phi) =\sum_{m \in F} E_m(\tilde g, s, \Phi),
$$
where
$$
E_m(\tilde g, s, \Phi) =\prod_{w \le \infty}  W_{m, w}(\tilde g_w, s, \Phi_w)
$$
for $m \ne 0$, and the constant term is given by
$$
E_0(\tilde g, s, \Phi) =\Phi(\tilde g, s) + M(s) \Phi(\tilde g, s) = \Phi(\tilde g, s) +
\prod_{w} W_{0, w}(\tilde g_w, s, \Phi_w).
$$
Here
$$
W_{m, w}(\tilde g_w, s, \Phi_w) =\int_{F_w} \Phi_w(J n(b) \tilde g_w, s)
\psi_w(-m b) db
$$
is the local Whittaker function, and $db$ is the Haar measure on
$F_w$ which is self-dual with respect to $\psi_w$.

When $w=\sigma_j$ is an infinite prime,
we identify $F_{\sigma_j}=F\otimes_{\sigma_j}\R$ with $\mathbb R$. Then  $I(s, \chi_w)$ is generated by
the functions
$$
\Phi_{\mathbb R}^{r_j}([k_\theta, 1]) = e^{i \theta r_j/2 },
\quad -\pi < \theta \le \pi,
$$
with $r_j \equiv\frac{n+2}{2}  \pmod{2}$.
Here
$$
k_\theta=\zxz {\cos \theta} {\sin \theta} {-\sin \theta}  {\cos
\theta},
$$
and  $[g, \epsilon]$ is the normalized coordinate system to write   $\tilde G_w$ as  $G_w \times \{ \pm 1\}$, see  \cite{KRYbook}.
For $k =(k_1, \cdots, k_d) \equiv (\frac{n+2}2, \cdots,
\frac{n+2}2) \pmod{2}$, we write $\Phi_\infty^k = \prod_j
\Phi_{\mathbb R}^{k_j} \in I(s, \chi_\infty)$.

\subsection{The map $\lambda$}
Let $w$ be a place of $F$.
Locally, for a quadratic space $(V_w, Q)$  of dimension $n+2$ with quadratic character $\chi_w$, there is a $\widetilde{\SL}_2(F_w)$-equivariant map
\begin{equation}
\lambda: \,  S(V(F_w)) \rightarrow I(s_0, \chi_w),  \quad
\lambda(\phi)(\tilde g) = \omega(\tilde g) \phi(0).
\end{equation}
We will also write $\lambda(\phi)$ for the  unique standard
section in $I(s, \chi_w)$ whose value at $s=s_0$ is $\lambda(\phi)$.
When $(V, Q)$ is a global quadratic space over $F$, we have a
$\widetilde{\Sl}_2(\A_F)$-equivariant map $$ \lambda: \,   S(V(\A_F))  \rightarrow I(s_0,
\chi),  \quad \lambda(\phi)(\tilde g) = \omega(\tilde g) \phi(0).
$$
The following local facts are well-known.

\begin{lemma} \label{lem:lambdainfty}
Let $V$ be a  quadratic space over $\mathbb R$ of signature $(p, q)$ with a specific orthogonal decomposition
$V =V^+ \oplus V^-$ into a positive definite space $V^+$ and a negative definite space $V^-$.  Let
$$
\phi^{p, q}(x) = e^{-2 \pi Q(x_+) + 2 \pi Q(x_-)} \in S(V),
$$
where $x =x_+ + x_- \in V$ with $x_{\pm} \in V_{\pm}$.
Then
$$
\lambda(\phi^{p, q}) = \Phi_{\mathbb R}^{\frac{p-q}2}.
$$
\end{lemma}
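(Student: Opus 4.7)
The plan is to exploit the multiplicativity of the Weil representation under orthogonal decompositions to reduce to definite quadratic spaces, where the Gaussian is a weight vector for the maximal compact subgroup of $\widetilde{\SL}_2(\R)$. Concretely, I would first factor $\phi^{p,q}(x) = \phi^{p,0}(x_+)\cdot \phi^{0,q}(x_-)$ along the given decomposition $V = V^+ \oplus V^-$, with $\phi^{p,0}(x_+) = e^{-2\pi Q(x_+)}$ on $V^+$ and $\phi^{0,q}(x_-) = e^{2\pi Q(x_-)}$ on $V^-$. Since the Weil representation respects orthogonal direct sums, $\omega_V(\tilde g) = \omega_{V^+}(\tilde g)\otimes \omega_{V^-}(\tilde g)$ on $S(V^+)\otimes S(V^-) = S(V)$, and hence $\lambda(\phi^{p,q})(\tilde g) = \lambda(\phi^{p,0})(\tilde g)\cdot \lambda(\phi^{0,q})(\tilde g)$. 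Because also $\Phi_\R^{(p-q)/2}=\Phi_\R^{p/2}\cdot \Phi_\R^{-q/2}$, this reduces the claim to the two definite cases.

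For $V^+$ positive definite of dimension $p$, the goal is to show that $\omega(\tilde k_\theta)\phi^{p,0} = e^{ip\theta/2}\phi^{p,0}$. The cleanest argument passes to the Fock (holomorphic) model: under the standard intertwiner the Gaussian corresponds to the constant function $1$, on which the maximal compact $\widetilde{\SO}(2)$ acts by the character of weight $p/2$. Equivalently, one can compute in the Schr\"odinger model the action of the infinitesimal generator of rotations---up to normalization the harmonic-oscillator Hamiltonian---and observe that $\phi^{p,0}$ is its ground state with eigenvalue $ip/2$, then exponentiate. Evaluating at $x=0$ recovers $\Phi_\R^{p/2}([k_\theta,1])=e^{ip\theta/2}$. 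The negative-definite case is analogous but with the opposite sign, giving $\lambda(\phi^{0,q})([k_\theta,1])=e^{-iq\theta/2}$, and multiplying these two contributions yields the lemma.

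The main subtlety, which I expect to be the principal obstacle, is the correct normalization of the metaplectic cocycle and the branch of the square root implicit in the coordinate $[g,\epsilon]$: one must verify that the conventions used to split $\widetilde{\SL}_2(\R)$ over its maximal compact subgroup and those used to define $\omega_{V,\psi_\infty}$ are consistent, so that the half-integer weights contributed by the two definite factors combine to give exactly $(p-q)/2$ rather than differing by a sign. Once this is pinned down, both sides of the claimed identity are standard sections in $I(s_0,\chi_{V,\infty})$ agreeing on $[k_\theta,1]$ for all $\theta$, and since a standard section is determined by its restriction to the maximal compact, the equality follows.
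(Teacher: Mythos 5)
Your argument is sound, but there is nothing in the paper to compare it against: the lemma is introduced with ``The following local facts are well-known'' and no proof is given. Your route --- factor $\phi^{p,q}=\phi^{p,0}\otimes\phi^{0,q}$ along $V=V^+\oplus V^-$, use the multiplicativity $\omega_V=\omega_{V^+}\otimes\omega_{V^-}$ of the Weil representation under orthogonal direct sums so that $\lambda$ becomes multiplicative, identify the Gaussian on a definite space of dimension $p$ (resp.\ $q$) as a $\tilde K$-eigenvector of weight $p/2$ (resp.\ $-q/2$) via the Fock model or the harmonic oscillator, and conclude from the Iwasawa decomposition that a section of $I(s_0,\chi)$ is determined by its restriction to the preimage of the maximal compact --- is the standard verification of this fact, and it is complete as a strategy. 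The one point you rightly flag as delicate is the cocycle normalization: the identity $\omega([k_\theta,1])\phi^{1,0}=e^{i\theta/2}\phi^{1,0}$ is exactly what the normalized coordinates $[g,\epsilon]$ of [KRY2] are designed to produce, and the paper's conventions are taken from there, so this is consistent. One small caution: the identity $\Phi_\R^{(p-q)/2}=\Phi_\R^{p/2}\cdot\Phi_\R^{-q/2}$ should be asserted only as an equality of restrictions to $\tilde K$ (which is all you use); as sections of induced representations the characters $\chi_{V^\pm}$ multiply to $\chi_V$ only up to the factor $(\sgn)^{pq}$ coming from $(-1)^{\ell(\ell-1)/2}\neq(-1)^{p(p-1)/2}(-1)^{q(q-1)/2}$, the discrepancy being absorbed by the genuineness of the half-integral-weight factors. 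Since you evaluate only on $[k_\theta,1]$ and then invoke uniqueness of the standard section with that restriction, this does not affect the conclusion.
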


\begin{lemma} \label{lem:lambdaunram}
Let $\mathfrak p$ be a  finite prime of $F$, and assume that $L_{\mathfrak p}$ is a unimodular lattice in $V_{\mathfrak p}$. Let $\chi_0 \in S(V_{\mathfrak p})$ be the characteristic function of $L_{\mathfrak p}$. Then $\Phi_{\mathfrak p}^0=\lambda(\chi_0)$ is the spherical section in  $I(s_0, \chi_{\mathfrak p})$ with $\lambda(\chi_0)(1)=1$, i.e., for all $k \in  K_{\mathfrak p}=\SL_2(\OO_{F_{\mathfrak p}})$ we have
$$
\Phi_{\mathfrak p}^0(\tilde g k) = \Phi_{\mathfrak p}^0(\tilde g).
$$
\end{lemma}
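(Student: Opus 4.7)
The plan is to verify the two claims directly from the definition $\lambda(\chi_0)(\tilde g) = \omega(\tilde g)\chi_0(0)$. The value at the identity is immediate: $\lambda(\chi_0)(1) = \chi_0(0) = 1$ since $0 \in L_{\mathfrak p}$. For the right $K_{\mathfrak p}$-invariance, since $\lambda$ is $\tilde G_{\mathfrak p}$-equivariant I have
\[
\lambda(\chi_0)(\tilde g k) = \omega(\tilde g k)\chi_0(0) = \omega(\tilde g)\bigl(\omega(k)\chi_0\bigr)(0),
\]
so it suffices to prove that $\omega(k)\chi_0 = \chi_0$ for every $k$ in (the image in $\tilde K_{\mathfrak p}$ of) $K_{\mathfrak p} = \SL_2(\calO_{F_{\mathfrak p}})$ under the canonical splitting.

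I would then check this on a generating set, namely $n(b)$ for $b\in\calO_{F_{\mathfrak p}}$, $m(a)$ for $a\in\calO_{F_{\mathfrak p}}^\times$, and the Weyl element $J$. For $n(b)$, the standard Schrödinger model formula gives $\omega(n(b))\chi_0(x) = \psi_{\mathfrak p}(bQ(x))\chi_0(x)$. Since $L_{\mathfrak p}$ is even we have $Q(x)\in\partial_{\mathfrak p}^{-1}$ for $x\in L_{\mathfrak p}$, hence $bQ(x)\in\partial_{\mathfrak p}^{-1}$; as $\psi_{\mathfrak p}=\psi_{\Q_p}\circ\tr$ with $\psi_{\Q_p}$ unramified, this character is trivial on $\partial_{\mathfrak p}^{-1}$, and the formula reduces to $\chi_0(x)$. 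For $m(a)$, one has $\omega(m(a))\chi_0(x) = \chi_{V,\mathfrak p}(a)|a|_{\mathfrak p}^{(n+2)/2}\chi_0(ax)$; since $a\in\calO_{\mathfrak p}^\times$ we have $|a|_{\mathfrak p}=1$ and $aL_{\mathfrak p}=L_{\mathfrak p}$, while $\chi_{V,\mathfrak p}$ is unramified at $\mathfrak p$ (as $V_{\mathfrak p}$ carries a unimodular lattice), so $\chi_{V,\mathfrak p}(a)=1$.

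The remaining case of $J$ is where the actual content of the lemma lies. The Weil action is
\[
\omega(J)\chi_0(x) = \gamma_{\mathfrak p}\int_{V_{\mathfrak p}}\chi_0(y)\psi_{\mathfrak p}((x,y))\,dy = \gamma_{\mathfrak p}\cdot\operatorname{vol}(L_{\mathfrak p})\cdot\chi_{L_{\mathfrak p}'}(x),
\]
where $dy$ is the self-dual Haar measure with respect to $\psi_{\mathfrak p}$ and $\gamma_{\mathfrak p}$ is the local Weil index of $V_{\mathfrak p}$. By the unimodularity assumption $L_{\mathfrak p}' = L_{\mathfrak p}$, and the self-dual volume of a unimodular even lattice equals $1$; the key ingredient is that $\gamma_{\mathfrak p}=1$, which holds for quadratic spaces admitting a unimodular lattice at an unramified prime.

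The main (indeed the only) obstacle is bookkeeping: one must be sure that the canonical splitting $G(F)\hookrightarrow\tilde G_{\A}$ used to define $\rho_L$ agrees with the metaplectic splitting on $K_{\mathfrak p}$ that makes $\chi_0$ fixed, and that the local Weil index indeed trivializes. Both are standard facts (compare \cite{KRYbook}), so in the writeup I would simply invoke them and then assemble the three generator computations above to conclude.
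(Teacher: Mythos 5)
The paper does not actually prove this lemma: it is stated (together with Lemma \ref{lem:lambdainfty}) under the heading ``the following local facts are well-known'' and is used as a black box, so there is no in-paper argument to compare against. Your proof is the standard one and is essentially correct: by equivariance of $\lambda$ the claim reduces to $\omega(k)\chi_0=\chi_0$ for $k\in K_{\mathfrak p}$, and since $\SL_2(\calO_{F_{\mathfrak p}})$ is generated by the $n(b)$, $b\in\calO_{F_{\mathfrak p}}$, together with $J$ (the $m(a)$ are then redundant but harmless), it suffices to do exactly the three computations you list. Two small points deserve a sentence each in the writeup. First, the facts you invoke at the end --- that $\chi_{V,\mathfrak p}$ is unramified, that $\gamma(V_{\mathfrak p})=1$, and that the self-dual volume of $L_{\mathfrak p}$ is $1$ --- are cleanest if you first perform the twist the authors use in the proof of Lemma \ref{lem:Whittakerfinite}(3): replace $\psi_{\mathfrak p}$ by $\tilde\psi_{\mathfrak p}(x)=\psi_{\mathfrak p}(\delta^{-1}x)$ and $Q$ by $\delta Q$, which leaves $\omega$ unchanged and converts $L_{\mathfrak p}$ into a lattice that is self-dual for an unramified character; the three assertions are then the textbook statements for unimodular lattices over $\calO_{F_{\mathfrak p}}$. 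Second, the splitting issue you flag is genuine only for $n$ odd, where the paper itself restricts to $\mathfrak p\nmid 2$ and uses the canonical splitting of $K_{\mathfrak p}$ (see the remark immediately following the lemma); for $n$ even the cocycle is trivial on $\SL_2(F_{\mathfrak p})$ and nothing needs to be checked. With those two remarks added, your argument is a complete and correct proof of a statement the paper leaves unproved.
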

When $n$ is odd, $\mathfrak p$ has to be an odd prime of $F$, and there is a
canonical splitting from $K_{\mathfrak p}$ into the metaplectic cover
In the above
identity, we viewed $k$ as an element of the metaplectic cover
via the
canonical splitting.

\subsection{Siegel theta functions and Siegel-Weil formula}
\label{sect:4.3}

A point $z \in \mathbb D$ gives rise to an  orthogonal decomposition
$V_{\sigma_1} =V \otimes_{F, \sigma_1} \mathbb R =z \oplus
z^\perp$.  So an element $x \in V_{\sigma_1}$ can be written as
$x =x_z + x_{z^\perp}$ with respect to this decomposition.  Let
\begin{align}
\phi_1(x, z) &=\phi^{n, 2}(x) = e^{-2 \pi \left(  Q(x_{z^\perp})-  Q(x_{z})\right)},  \quad x \in V_{\sigma_1}, z\in \mathbb D,
\\
\phi_j(x) &= \phi^{n+2, 0}(x) =e^{-2 \pi Q(x)}, \quad x \in V_{\sigma_j},  j >1
\notag
\end{align}
as in Lemma \ref{lem:lambdainfty}. For $x=(x_1,\dots,x_d)\in V(\R)$  let
$$
\phi_\infty(x, z) = \phi_1(x_1 , z) \prod_{j >1} \phi_j(x_j).
$$
Then  Lemma \ref{lem:lambdainfty} implies that
\begin{equation}\label{eq4.5}
\lambda(\phi_\infty(\cdot, z)) =\Phi_\infty^{\tilde \kappa}
\end{equation}
for any $z\in \D$. For a Schwartz function  $\phi_f \in S(V(\hat F))$, we definite its Siegel theta function
of weight $\tilde\kappa$ as
\begin{equation}
\theta(\tau, z, h; \phi_f)
=v^{-\tilde \kappa/2}  \sum_{x \in V(F)} \omega({\tilde g_\tau})\phi_\infty(x, z) \phi_f(h^{-1}x).
\end{equation}
Here $\tilde g_\tau=[n(u) m(\sqrt v), 1] \in \tilde G_\mathbb R$ for $\tau = u + i v \in \mathbb H^d$.  It is a (non-holomorphic)
Hilbert modular form for some congruence subgroup of $\SL_2(F)$. A simple calculation shows that
\begin{align}
\label{theta2}
\theta(\tau,z,h;\phi_f)
&= v_1  \sum_{x\in V(F)}\phi_f(h^{-1} x)
e\big( \tr Q(x_{z^\perp})\tau+ \tr Q(x_{z})\bar\tau\big).
\end{align}
Let
 \begin{align}
\label{theta5}
\Theta_L(\tau,z,h)=\sum_{\mu\in L'/L}\theta(\tau,z,h,\chi_\mu)\chi_\mu
\end{align}
be the $S_L$-valued Siegel theta function associated to $L$. It is a $S_L$-valued  Hilbert modular form of weight $\tilde \kappa$ with representation $\rho_L$, see also \cite[Section 3.3]{Br2}.
Note that compared to \cite{Br2} we have dropped the subscript $S$ from the notation (since there will be only theta functions of this type in the present paper) and have added a subscript $L$ referring to the lattice (since there will be theta functions for different lattices).

Another way to construct such a modular form is  via the coherent Eisenstein series
\begin{align}
E(\tau, s, \phi_f, \tilde\kappa) &=v^{-\tilde\kappa/2} E(\tilde g_\tau, s,
\lambda(\phi_f) \otimes \Phi_\infty^{\tilde\kappa}),
\\
 E_{L}(\tau, s, \tilde\kappa) &=\sum_{\mu \in L'/L}E(\tau, s, \chi_\mu,
 \tilde\kappa)  \chi_\mu.
 \end{align}
Under our assumption on $V$, the Eisenstein series $E_L(\tau, s, \tilde\kappa)$ is holomorphic in $s$ at $s_0$, and its value at $s=s_0$ is a $S_L$-valued Hilbert modular form of weight $\tilde\kappa$ with representation $\rho_L$.

 Let $\Omega$ be the invariant K\"ahler form on $\mathbb D$
 normalized as in \cite{Br2}. Then $\Omega^n$ is an invariant volume form on
 $X_K$. We denote
 $$
\vol(X_K) =\int_{X_K} \Omega^n.
 $$
 If $n\geq 1$ and $C$ is a divisor on $X_K$, we define its degree as
$$
\deg(C)=\int_C \Omega^{n-1}.
$$
 When $n=0$, $\D$ consists of two points $z^\pm$ given by
$V_{\sigma_1}$ with the two possible choices of  an orientation.
Let $\kay =F(\sqrt{-\det V})$ be the quadratic
extension of $F$ associated to $\chi_V$. It is a CM number
field with maximal totally real subfield $F$, and
$H\cong \Res_{k/\mathbb Q} \mathbb G_m$ as algebraic groups. Under this
identification, $K\subset H(\hat{\Q})$ is a compact open subgroup
of $\hat k^\times$, and
\begin{align}
\label{eq:degCM0}
X_K = \kay^\times\backslash \{ z^{\pm} \} \times \hat \kay^\times/K = \{
z^\pm \} \times  \kay^\times\backslash \hat \kay^\times/K
\end{align}
consists of two copies of the idele class group $ \kay^\times\backslash \hat \kay^\times/K$.
For arithmetic reasons, it is convenient to count each point in $X_K$ with  multiplicity $2/w_K$, where
$w_K$ is the number of roots of unity which are contained in $K$ under the embedding $\kay^\times \subset \hat{\kay}^\times$.
In this case,  the `volume' with respect to the counting measure is
\begin{align}
\label{eq:degCM1}
\vol(X_K) = \frac{4}{w_K} |\kay^\times \backslash \hat{\kay}^\times/K|.
\end{align}
When $K=\hat{\OO}_\kay^\times$, then $w_K =w_\kay$ is the number of roots of unity in $\calO_k^\times$ and
\begin{align}
\label{eq:degCM2}
\vol(X_K) = \frac{4}{w_\kay} h_\kay,
\end{align}
where $h_\kay$ is the ideal class number of $\kay$.

\begin{lemma}[Siegel--Weil formula] \label{Siegel-Weil}
Let the notation be as above.  Then
$$
E_{L}(\tau, s_0,  \tilde\kappa) = \frac{c}{\vol (X_{K})}
\int_{X_{K}} \Theta_{L}(\tau, z, h) \Omega^n.
$$
Here $c=1$ for $n \ge 1$, and $c=2$ for  $n=0$.
In particular,  when $n=0$,  $X_K$ is of dimension zero and one has
\begin{align}
\label{eq:kms2}
E_L(\tau, 0, \tilde \kappa)= \frac{2}{\vol (X_K)}\sum_{[z,h]\in X_K} \Theta_{L}(\tau,z,h)
:=\frac{2}{\vol (X_K)}  \sum_{[z, h] \in \supp(X_K)} \frac{2}{w_\kay}\Theta_{L}(\tau,z,h) .
\end{align}
Here each point counts with multiplicity $\frac{2}{w_K}$.
\end{lemma}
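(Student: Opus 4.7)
The plan is to deduce this from the classical adelic Siegel--Weil formula of Weil and Kudla--Rallis. Under our anisotropy (or small-Witt-rank) hypothesis on $V$, we are in the convergent range: the theta integral over $[\Orth(V)]=\Orth(V)(F)\bs\Orth(V)(\A_F)$ needs no regularization, and the coherent Eisenstein series $E(\tilde g,s,\lambda(\varphi))$ is holomorphic at $s=s_0=n/2$. The classical identity then reads
$$
E(\tilde g,s_0,\lambda(\varphi))\;=\;\frac{c}{\vol([\Orth(V)])}\,\int_{[\Orth(V)]}\theta(\tilde g,h;\varphi)\,dh
$$
for $\varphi\in S(V(\A_F))$, with $c=1$ when $\dim V>2$ and $c=2$ in the borderline case $\dim V=2$, the two cases of the statement.

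First I would specialize to the test function $\varphi=\chi_\mu\otimes\phi_\infty(\cdot,z)$ and invoke (\ref{eq4.5}), which says $\lambda(\phi_\infty(\cdot,z))=\Phi_\infty^{\tilde\kappa}$ independently of $z\in\D$. Multiplying by $v^{-\tilde\kappa/2}$ converts the adelic Eisenstein series to the classical $\mu$-component $E(\tau,s_0,\chi_\mu,\tilde\kappa)$, while the theta integrand becomes, after the same $v^{-\tilde\kappa/2}$-factor, the Siegel theta $\theta(\tau,z,h;\chi_\mu)$ with $z$ determined by the archimedian component of $h$. Next I would transfer the integral from $\Orth(V)$ to $H=\Res_{F/\Q}\Gspin(V)$ using the exact sequence $1\to F^\times\to\Gspin(V)\to\SO(V)\to 1$: the theta kernel is $\SO$-invariant and the centre acts trivially, so integration over $\Orth\backslash\Orth(\A_F)$ agrees with integration over $H(\Q)\bs H(\A)$ up to a measure constant that will be absorbed into $\vol(X_K)$. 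Using a Cartan-type decomposition $H(\A)=H(\Q)\cdot(K_\infty H(\R)^+)\cdot H(\hat\Q)$ with a suitable maximal compact $K_\infty\subset H(\R)$, the adelic integral pushes down to an integral over the double coset space $H(\Q)\bs(\D\times H(\hat\Q))/K=X_K$, with the invariant K\"ahler volume form $\Omega^n$ playing the role of $H(\R)^+$-Haar measure. Summing the resulting scalar identity against $\chi_\mu$ over $\mu\in L'/L$ gives the vector-valued version for $E_L$ and $\Theta_L$.

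For the case $n\geq 1$ the identification $\int_{X_K}\Omega^n=\vol(X_K)$ produces exactly the factor $c/\vol(X_K)$ with $c=1$. For $n=0$, $\D$ consists of the two points $z^{\pm}$ and $X_K$ is zero-dimensional; using (\ref{eq:degCM0})--(\ref{eq:degCM1}), the adelic integral collapses to a finite sum $\sum_{[z,h]\in\supp(X_K)}\frac{2}{w_K}\Theta_L(\tau,z,h)$, and the Siegel--Weil constant jumps to $c=2$ (equivalently, the two orientations $z^{\pm}$ both contribute and the Eisenstein series is now in the boundary rank-one case). The main obstacle in writing this out is the bookkeeping of Haar measures: matching the Tamagawa measure on $[\Orth(V)]$ with the counting measure on the idele class group $\kay^\times\bs\hat\kay^\times/K$ normalized by $2/w_K$, and verifying that the ratio of volumes yields precisely $c/\vol(X_K)$ with $c$ as stated. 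All the analytic input is contained in the classical Siegel--Weil theorem; the remaining work is essentially geometric and measure-theoretic.
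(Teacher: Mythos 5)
Your proposal is correct and follows essentially the same route as the paper: both invoke the classical adelic Siegel--Weil formula (the paper cites it for $[\SO(V)]$, you for $[\Orth(V)]$, a standard and harmless variation), use the identity $\lambda(\phi_\infty(\cdot,z))=\Phi_\infty^{\tilde\kappa}$ to identify the archimedian section, pull the theta integral back to $H=\Res_{F/\Q}\GSpin(V)$ via the surjection onto $\SO(V)$, and descend through the isomorphism $H(\Q)\bs H(\A)/K_\infty K\cong X_K$ by a change of variables. The measure bookkeeping you flag as the remaining obstacle is exactly what the paper also leaves implicit in its final ``change of variables'' step.
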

\begin{proof}
For a Schwartz function $\phi\in S(V(\A_F))$, let $\theta(\tilde g,t,\phi)$ be the corresponding theta function on $\tilde G_\A\times \SO(V)(\A_F)$.
We fix a base  point $z_0\in \D$ and let $\phi_\infty= \phi_\infty(\cdot,z_0)$.
Taking into account \eqref{eq4.5}, the Siegel--Weil formula (see \cite[Theorem 4.1]{Ku:Integrals}, \cite{KR1}) asserts that
$$
E(\tilde g, s_0, \lambda(\chi_\mu)\otimes\Phi_\infty^{\tilde\kappa})=\frac{c}{\vol([\SO(V)])
}\int_{[\SO(V)]} \theta(\tilde g, t, \chi_\mu \otimes \phi_\infty) dt.
$$
Here $[\SO(V)]=\SO(V)(F) \backslash \SO(V)(\A_F)$, and $dt$ is an
invariant  measure on $[\SO(V)]$ induced by a  Haar measure on $\SO(V)(\A_F)$.

We write $\theta(\tilde g, h, \phi) = \theta(\tilde g, \pi(h), \phi)$ for
$h \in H(\A)$ where $\pi$ is the canonical surjection from $H(\A)=\GSpin(V)(\A_F)$
to $\SO(V)(\A_F)$. Notice that
$$
H(\Q) \backslash H(\A)/K_\infty K \cong X_K,  \quad [
h_\infty h_f]\mapsto [h_{\sigma_1}(z_0), h_f]
$$
is an isomorphism of manifolds, where $K_\infty$ is the stabilizer
of $z_0$ in $H(\R)\cong \prod_j \Gspin(V_{\sigma_j})$.
Then it is
easy to see that
$$
v^{-\tilde\kappa/2} \theta(\tilde g_\tau, h_f h_\infty, \lambda(\chi_\mu
\otimes \phi_\infty) = \theta(\tau, z, h_f, \chi_\mu)
$$
with $z= h_\infty(z_0)$. Now the lemma follows from a change of
variables.
\end{proof}

\subsection{Incoherent Eisenstein series}
\label{sect:4.4}

Here  we consider the  following Eisenstein series of weight $\kappa$ associated to $L$:
\begin{align}
E_L(\tau, s, \kappa) &=\sum_{\mu \in L'/L} E(\tau, s, \mu,\kappa) \chi_\mu,
\\
E(\tau, s, \mu,\kappa) &= v^{-\kappa/2} E(\tilde g_\tau, s,  \lambda( \chi_\mu) \otimes \Phi_\infty^\kappa).
\end{align}
It is incoherent in the sense that it comes from an incoherent quadratic space over $\A_F$ (see Section \ref{sect:global} for details) in a natural way.
Let $\V =\prod_{w\leq \infty} \V_w$ be the `incoherent' quadratic space over
$\A_F$  obtained from $V$ by taking
$\V_w =  V_w$ for every place $w \ne \sigma_1$, and letting
$\V_{\sigma_1}$ be positive definite of dimension $n+2$. In particular, $\hat{\V}=\prod_{w<\infty}\V_w$ is equal to
$\hat{V}$. Let $\LL $ be the image of $\hat{L}$ in
$\hat{\V}$.
Then  $\LL'/\LL \cong L'/L$, and  $\chi_\mu =\Char(\mu + \LL)
\in S(\hat{\V})$. For  $x \in \V_\infty =\prod_j \V_{\sigma_j}$
we let
$$
\phi_{\V, \infty}(x) = \prod_{j=1}^d e^{- 2 \pi Q(x_j)}.
$$
Then Lemma \ref{lem:lambdainfty} implies that
$\lambda(\phi_{\V, \infty}) = \Phi_\infty^\kappa$. So
\begin{equation}
 E(\tau, s, \mu,\kappa) = v^{-\kappa/2} E(\tilde g_\tau, s,
 \lambda(\chi_\mu \otimes \phi_{\V, \infty})).
 \end{equation}

The coherent  Eisenstein series $E_L(\tau, s, \tilde\kappa)$ and the incoherent Eisenstein series $E_L(\tau, s, \kappa)$  are related by the Maass lowing operator in $\tau_1$ as follows (see \cite{Ku:Integrals}, \cite{Br2}):
\begin{align}
\label{eq:eisrel}
L_{\kappa_1}^{(1)} E_L(\tau,s,\kappa)= \frac{1}{2}(s+1-\kappa_1)E_L(\tau,s,\tilde\kappa).
\end{align}
In particular, we have
\begin{align}
\label{eq:eisrel2}
2 L_{\kappa_1}^{(1)}  E_L'(\tau,s_0,\kappa)=E_L(\tau, s_0, \tilde\kappa) .
\end{align}
For the rest of this section, we will study the the derivative $ E_L'(\tau, s_0, \kappa)$.

\subsection{The local Whittaker functions}

For an integer $r \ge 0$ and a finite prime $\mathfrak p$ of $F$,  let $L_r = L_{\mathfrak p} \oplus H^r$, where $H =\OO_{\mathfrak p}^2 $ is the standard quadratic lattice with  with the quadratic form $Q(x, y) =\delta_{\mathfrak p}^{-1}xy$, where $\delta_\mathfrak p$ is a generator of $\partial_\mathfrak p$. Let $V_r = L_r \otimes_{v} F_{\mathfrak p} $ be the associated quadratic space. Notice that $H$ is unimodular (with respect to $\psi_{\mathfrak p}$), and so $L_r'/L_r \cong L_{\mathfrak p}'/L_{\mathfrak p}$. The same calculation as in \cite[Appendix]{Ku:Annals} gives
   \begin{equation}
    W_{m, \mathfrak p}(1, s, \chi_\mu) = \gamma(V_{\mathfrak p}) [L_{\mathfrak p}':L_{\mathfrak p}]^{-\frac{1}2} |\partial_{\mathfrak p}|^{\frac{1}2} W_{\mathfrak p}(s, m, \mu),
   \end{equation}
  where $\gamma(V_{\mathfrak p})$ is an $8$-th root of unity called the local Weil index (see \cite{KuSplit}), and
  \begin{equation} \label{eq4.18}
  W_{\mathfrak p}(r+s_0, m, \mu) =\int_{F_{\mathfrak p}} \int_{\mu + L_r} \psi_{\mathfrak p}(b( Q(x)-m)) dx db
  \end{equation}
  for any integer $r \ge 0$.  Here we normalize the Haar measure such that
   $$
   \vol(L_r, dx) =1, \quad \vol(\OO_{\mathfrak p}, db) =1.
   $$
   The  local Weil indices  have  the product formula:
  \begin{equation} \label{WeilIndex}
   -\prod_{w \le \infty} \gamma(\V_{w}) =\prod_{w \le \infty} \gamma(V_{w}) =1.
   \end{equation}
The same calculation as in \cite[Sections 2-4]{Ya:density}  (see also \cite{KY}) gives the following lemma.

\begin{lemma}
\label{lem:Whittakerfinite}
Let $\mathfrak p$ be a finite prime of $F$.
Then $W_{\mathfrak p}(s, m, \mu) =0$ unless $m \in Q(\mu) + \partial_{\mathfrak p}^{-1}$.
  Assume this condition, and let $\norm(\mathfrak p)$ be the cardinality of the residue field of $F_{\mathfrak p}$.

(1) \quad $W_{\mathfrak p}(s, m, \mu)$  is a polynomial of $\norm(\mathfrak p)^{s_0-s}$ with coefficients in $\mathbb Q$.

(2) \quad If $L_{\mathfrak p}$ is unimodular and $n$ is even, then $\lambda(\chi_0)$ is a spherical section
  in $I(s, \chi_\mathfrak p)$,  i.e., it is $\SL_2(\OO_{\mathfrak p})$-invariant. In this case,  one has
$$
W_{\mathfrak p}(s, m, 0) =\frac{1}{L(s+1, \chi_{\mathfrak p})}
 \sum_{i=0}^{\ord_{\mathfrak p} (m  \partial_{\mathfrak p})} (\chi_{\mathfrak p}(\varpi_{\mathfrak p}) \norm({\mathfrak p})^{-s})^i.
$$
Here $\varpi_{\mathfrak p}$ is a uniformizer of $F_{\mathfrak p}$,  and
$$
W_{\mathfrak p}(s, 0, 0) =
   \frac{L_{\mathfrak p}(s, \chi_{\mathfrak p})}{L_{\mathfrak p}(s+1, \chi_{\mathfrak p})}.
$$

(3)\quad When $L_{\mathfrak p}$ is unimodular and $n$ is odd,  $\lambda(\chi_0)$ is a spherical section
 in $I(s, \chi_\mathfrak p)$. In this case,  one has
 $$
 W_{\mathfrak p}(s, m, 0)= \begin{cases}
 0 &\ff m \notin \partial_{\mathfrak p}^{-1},
 \\
   \frac{L_{\mathfrak p}(s+\frac{1}2, \chi^{(m)}_{\mathfrak p})}{\zeta_{\mathfrak p}(2s+ 1)}b_\mathfrak p(\partial_\mathfrak p m, s+\frac{1}2) &\ff 0\ne  m \in \partial_{\mathfrak p}^{-1} ,
   \\
     \frac{\zeta_{\mathfrak p}(2s)}{\zeta_{\mathfrak p}(2s+1)} &\ff m=0.
     \end{cases}
 $$
 Here $\chi_\mathfrak p^{(m)} =(2\alpha m  , \cdot )_\mathfrak p$ is a quadratic  twist of $\chi_\mathfrak p$ if   $\chi_\mathfrak p=(\alpha, \cdot )_\mathfrak p$, and
 $$
 b_\mathfrak p(\partial_\mathfrak p m, s)= \frac{1-v_\mathfrak p \norm(\mathfrak p)^{-s} + v_\mathfrak p \norm(\mathfrak p)^{k-(1+2k)s} - \norm(\mathfrak p)^{(k+1)(1-2s)}}{1- \norm(\mathfrak p)^{1-2s}}
 $$
with $k=k(m)= [\frac{\ord_\mathfrak p (m \partial_p)}2]$, and
$$
v_\mathfrak p =\begin{cases}
   0 &\ff \ord_\mathfrak p (m \partial_p) \equiv 1 \pmod{2},
   \\
   \chi_\mathfrak p^{(m)}(\pi_\mathfrak p) &\ff \ord_\mathfrak p (m \partial_p) \equiv 0 \pmod{2}.
   \end{cases}
$$

(4) \quad When $V_{\mathfrak p}$ is anisotropic, one has
$$
W_{\mathfrak p}(s_0, 0, \mu)=0.
$$
\end{lemma}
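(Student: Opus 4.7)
The plan is to evaluate the local Whittaker integral
\[
W_{\mathfrak p}(r+s_0, m, \mu) = \int_{F_{\mathfrak p}} \int_{\mu + L_r} \psi_{\mathfrak p}\bigl(b(Q(x)-m)\bigr)\, dx\, db
\]
explicitly by interchanging the order of integration and exploiting the hyperbolic enlargement $L_r = L_{\mathfrak p}\oplus H^r$. First I would check the support condition $m\in Q(\mu)+\partial_{\mathfrak p}^{-1}$: for $b\in \partial_{\mathfrak p}^{-1}\calO_{\mathfrak p}$ the inner integral equals $\psi_{\mathfrak p}(-bm)\vol(\mu+L_r)$, and integrating over a compact neighborhood forces $m\equiv Q(\mu)\pmod{\partial_{\mathfrak p}^{-1}}$ for nonvanishing, while for $b$ of small valuation the Gauss sum over $\mu+L_r$ vanishes outside a bounded valuation range.

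For part (1), I would stratify $F_{\mathfrak p}$ by $|b|$, writing the integral as a finite sum of Gauss-type integrals over $\mu+L_r$. The added hyperbolic planes contribute a factor involving a geometric series in $\norm(\mathfrak p)^{-s}$ which, after substituting $s=r+s_0$, exhibits $W_{\mathfrak p}(s,m,\mu)$ as a polynomial in $X=\norm(\mathfrak p)^{s_0-s}$. Rationality of the coefficients follows from rationality of the normalized Gauss sums on $L_{\mathfrak p}$ combined with the rational volumes chosen in the normalization $\vol(L_r)=\vol(\calO_{\mathfrak p})=1$.

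For parts (2) and (3), unimodularity of $L_{\mathfrak p}$ makes $\lambda(\chi_0)$ the spherical vector in $I(s,\chi_{\mathfrak p})$, and the computation reduces to the classical unramified Whittaker calculation for the Siegel--Weil section. In the even $n$ case, $\chi_{\mathfrak p}$ is an unramified quadratic character and a direct expansion of the inner Gauss sum against the geometric series in $b$ produces the claimed finite sum divided by $L(s+1,\chi_{\mathfrak p})$, with the $m=0$ value following from the Hecke--type sum $\sum_i(\chi_{\mathfrak p}(\varpi)\norm(\mathfrak p)^{-s})^i$. In the odd $n$ case, the extra square-root factor from the Weil representation forces the quadratic twist $\chi_{\mathfrak p}^{(m)}$, and careful bookkeeping of the valuation $k(m)=[\ord_{\mathfrak p}(m\partial_{\mathfrak p})/2]$ and the parity of $\ord_{\mathfrak p}(m\partial_{\mathfrak p})$ produces the polynomial $b_{\mathfrak p}(\partial_{\mathfrak p}m,s+\tfrac12)$. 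This is precisely the computation carried out in \cite[Sections 2--4]{Ya:density}, and I would normalize my setup to match theirs and invoke the result.

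For part (4), when $V_{\mathfrak p}$ is anisotropic, the only $x\in V_{\mathfrak p}$ with $Q(x)=0$ is $x=0$, hence the orbital integral on the right side of the local Siegel--Weil identity at $m=0$ reduces to a single point of measure zero. Equivalently, at $s=s_0$ the outer $b$-integral against the constant $\psi_{\mathfrak p}(0)=1$ is proportional to the local zeta factor $L_{\mathfrak p}(0,\chi_{V_{\mathfrak p}})^{-1}$ paired with an orbital density that vanishes by anisotropy; the analytic continuation of the polynomial from (1) evaluated at $s=s_0$ is therefore $0$. The main obstacle is the odd $n$ computation in (3): tracking the Weil index, the quadratic twist, and the case distinction in $v_{\mathfrak p}$ is delicate, and I would simply cite \cite{Ya:density} and \cite{KY} after verifying the normalization of $\psi$, $dx$, $db$ and the lattice $H$ agrees.
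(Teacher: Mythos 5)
Your proposal matches the paper's approach: the paper likewise derives the lemma from the local density computations of [Ya1] (and [KY]), writing out only case (3) in detail --- by twisting $\psi_{\mathfrak p}$ and $Q$ by the different so that the lattice becomes unimodular for an unramified character and [Ya1, Theorem 3.1] applies after splitting off hyperbolic planes --- and leaving (1), (2), (4) to the reader, for which your sketches are the standard arguments intended. Two small points to repair in the write-up: in the support-condition argument the relevant range is $b\in\calO_{\mathfrak p}$ (not $b\in\partial_{\mathfrak p}^{-1}$), where the inner integral equals $\psi_{\mathfrak p}\bigl(b(Q(\mu)-m)\bigr)\vol(\mu+L_r)$ since $Q(x)-Q(\mu)\in\partial_{\mathfrak p}^{-1}$ on $\mu+L_r$; and in (4) there is no ``analytic continuation of the polynomial'' to perform --- one simply observes that for $n\geq 1$ the defining integral already converges at $s=s_0$ and computes the local representation density of $0$, which vanishes because anisotropy forces all approximate zeros of $Q$ to lie near the origin.
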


\begin{proof}
We sketch a proof of the formula in (3) and leave the others to the reader. We write $L=L_\mathfrak p$ and $\OO=\OO_\mathfrak p$ in the proof. Let $\delta=\delta_\mathfrak p$ be a uniformizer of $\partial_F$, and $\tilde{\psi}(x) = \psi(\delta^{-1} x)$. Then $\tilde \psi$ is unramified. Let
$\tilde L$ be the lattice $L$ but equipped with the quadratic form $\tilde Q(x) = \delta Q(x)$.  Then $\tilde L$ is unimodular with respect to $\tilde\psi$. So one has
$$
\tilde L \cong \OO^{n+2}, \quad Q(\vec x) = \epsilon  x^2 + \sum_{j=1}^{\frac{n+1}2 }  y_j z_j,
$$
for some $\epsilon \in \OO^\times$. Using this notation, one sees
that $\chi_{\tilde L} =(2 \epsilon, \cdot)_\mathfrak p$. So $\epsilon =
2\alpha \delta \mod (F_\mathfrak p^\times)^2$. Let $\tilde L(0)
=\OO$ with quadratic form $\epsilon x^2$.  Then one has by
definition
\begin{align*}
W_\mathfrak p(r+s_0, m, 0)&= \int_{F_\mathfrak p} \int_{L\oplus
\OO^{2r}} \psi_\mathfrak p(b Q(\vec x) )d\vec x \, \psi_\mathfrak
p(-m b) db
\\
 &=\int_{F_\mathfrak p}\int_{\OO^{n+2 +2r}} \tilde \psi(bx^2)dx \prod_{j=1}^{r+\frac{n+1}2}\tilde\psi(b y_j z_j)  \prod_j dy_j \, dz_j
 \, \tilde{\psi} (-\delta m b) db
 \\
 &:= \tilde{W}_{\mathfrak p} (r+s_0, \delta m,  0).
 \end{align*}
Notice that the last integral  $\tilde{W}_{\mathfrak p} (r+s_0, m,  0)$ is the same normalized Whittaker function as the one defined in (\ref{eq4.18}) but with respect to $\tilde L(0)$ and $\tilde\psi$. It is also  the local density function studied in \cite{Ya:density}  with respect to $\tilde L(0)$ and $\tilde\psi$ when $F_\mathfrak p=\Q_p$. The argument and the formula works for general $F_\mathfrak p$ when $\mathfrak p\nmid 2$. Applying \cite[Theorem 3.1]{Ya:density}, one obtains the formula. Indeed,  the integral is zero if $\delta m \notin \OO$. When $0\ne m \delta \in \OO$,  and
$a:=\ord_\mathfrak p(m \delta) =\ord_\mathfrak p (\partial_\mathfrak p m) \ge 0$ is even, $\chi_\mathfrak p^{(m)}$ is unramified, and  \cite[Theorem 3.1]{Ya:density} gives ($X= q^{-r}$ and $q=\norm(\mathfrak p)$)
\begin{align*}
 \tilde{W}_{\mathfrak p} (r-\frac{1}2, \delta m,  0)
  &= 1 + (1-q^{-1}) \sum_{0 <k \le \frac{a}2} (q X)^{k} + v_{\mathfrak p} q^{\frac{a}2} X^{a+1}
  \\
   &= \frac{L(r, \chi_\mathfrak p^{(m)})}{\zeta_{\mathfrak p}(2r)} b_{\mathfrak p}(\delta m, r)
\end{align*}
as claimed.  When $a$ is odd,  $\chi_\mathfrak p^{(m)}$ is ramified and $L(s, \chi_{\mathfrak p}^{(m)})=1$, and $v_\mathfrak p =0$. In this case,
 \cite[Theorem 3.1]{Ya:density} gives
\begin{align*}
 \tilde{W}_{\mathfrak p} (r-\frac{1}2, \delta m,  0)
  &= 1 + (1-q^{-1}) \sum_{0 < k \le \frac{a-1}2} (q X^2)^{k} - q^{-1} (qX^2)^{a+1}
  \\
   &= (1-X^2) \frac{1- q^{\frac{a+1}2} X^{a+1}}{1- qX^2}
\end{align*}
as claimed.
\end{proof}

  Let
 $$
\Psi(a, b; z)
 =\frac{1}{\Gamma(a)}\int_0^{\infty}
   e^{-z r} (r+1)^{b-a-1} r^{a-1} dr
$$
be the standard confluent hypergeometric function of the second kind \cite{lebedev},
where
$a>0$, $z >0$ and $b$ is any real number. It satisfies the
functional equation \cite[p.~265]{lebedev}
$$
\Psi(a, b; z)
 =z^{1-b} \Psi(1+a-b, 2-b; z).
$$
For convenience, we also define
$$
\Psi(0, b;z)=\lim_{a\rightarrow 0+}  \Psi(a, b; z)=1.
$$
So $\Psi(a, b; z)$ is well-defined for $z>0$, $a \ge \min\{0, b-1\}$.
Finally, for any number $n$, we define
$$
\Psi_n(s, z)
 =\Psi(\frac{1}2(1+ n+s), s+1; z).
$$
Then $(2.7)$ implies
that
$\Psi_n(s, z)=z^{-s} \Psi_n(-s, z)$.
Set
\begin{equation}
W_{\sigma_j}(\tau_j, s, m, \kappa) = \frac{W_{m_j, \sigma_j}(\tau_j, s, \Phi_{\mathbb R}^{\frac{n+2}2})}{\gamma(\V_{\sigma_j})} e(-m_j \tau_j),
\end{equation}
with
$m_j=\sigma_j(m)$.
Then \cite[Proposition 15.1, (15.10) and (15.11)]{KRYComp}   gives the following lemma.

\begin{lemma} \label{lem:Whittakerinfty} \quad
(i) For $m_j > 0$, we have
$$
W_{\sigma_j}(\tau_j, s, m, \kappa)
 =2 \pi\, v_j^\frac{s-s_0}2 \, (2 \pi m)^{s}\, \frac{\Psi_{-s_0 -1}(s, 4 \pi m_j v_j)
}{\Gamma(\frac{s+s_0}2 +1)}.
$$
Moreover,
\begin{align*}
W_{\sigma_j}(\tau_j, s_0, m, \kappa)&=\frac{2 \pi (2\pi m_j)^{s_0}}{\Gamma(s_0+1)},
\\
\frac{W_{\sigma_j}'(\tau_j, s_0, m, \kappa)}{W_{\sigma_j}(\tau_j, s_0, m, \kappa)}&=\frac{1}2 \left[ \log( \pi m_j ) -\frac{\Gamma'(s_0+1 )}{\Gamma(s_0+1 )}
           +J(s_0, 4 \pi m_j v_j)\right],
\end{align*}
where  (for $t >0$, $a \in \mathbb R$)
$$
J(a, t):=\int_{0}^\infty e^{-  t x}
   \frac{(x+1)^{a}-1}{ x}\, dx.
$$

(ii) \quad  For $m_j  <0$, we have
$$
W_{\sigma_j}(\tau_j, s, m, \kappa)
 =2 \pi\,  v^{\frac{s-s_0}2} \, (-2 \pi  m_j)^{s} \, \frac{\Psi_{s_0+1} (s, -4 \pi m_j v_j)
}{\Gamma(\frac{s-s_0}2)}\, e^{4 \pi m_j v_j}.
$$
In particular, $W_{\sigma_j}(\tau_j, s_0, m, \kappa)=0$. Moreover,
$$
W_{\sigma_j}'(\tau_j, s_0, m, \kappa)
 =2^{-s_0} \pi\, v_j^{-s_0}\, \Gamma(-s_0, -4 \pi m_j v_j),
$$
where $\Gamma(a,t)$ denotes the incomplete gamma function.

(iii) \quad For $m=0$,
$$
W_{\sigma_j}(\tau_j, s, m, \kappa)
=2 \pi\, v_j^{-\frac{1}2(s+s_0)}\, \frac{2^{-
s}\Gamma(s)}{\Gamma(\frac{s+s_0}2  +1)\Gamma(\frac{s-s_0}2)}.
$$
In particular, for $ n>0$,  one has $W_{\sigma_j}(\tau_j, s_0, m, \kappa) =0$, and
$$
W_{\sigma_j}'(\tau_j, s_0, m, \kappa)=\frac{\pi}{2^{s_0} s_0} v_j^{-s_0}.
$$
For $n=0$, one  has
$$
W_{\sigma_j}(\tau_j, s, m, \kappa)
=v_j^{-\frac{s}2} \frac{\sqrt\pi\Gamma(\frac{s+1}2)}{ \Gamma(\frac{s+2}2)},
 \quad W_{\sigma_j}(\tau_j, 0, m, \kappa)=\pi .
$$
\end{lemma}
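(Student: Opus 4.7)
The plan is to derive this lemma directly from \cite[Proposition 15.1, (15.10), (15.11)]{KRYComp}, which already computes the raw Whittaker function $W_{m_j,\sigma_j}(\tilde g_{\tau_j},s,\Phi_{\R}^{(n+2)/2})$ in terms of the $\Psi$-function. The only real work is to translate those formulas through the normalization
$$W_{\sigma_j}(\tau_j,s,m,\kappa)=\gamma(\V_{\sigma_j})^{-1}e(-m_j\tau_j)W_{m_j,\sigma_j}(\tilde g_{\tau_j},s,\Phi_{\R}^{(n+2)/2})$$
and then specialize at the critical value $s=s_0$.

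First I would set up the integral
$$W_{m_j,\sigma_j}(\tilde g_{\tau_j},s,\Phi_{\R}^{(n+2)/2})=\int_{\R}\Phi_{\R}^{(n+2)/2}(Jn(b)\tilde g_{\tau_j},s)\psi_{\sigma_j}(-m_jb)\,db,$$
compute the Iwasawa decomposition of $Jn(b)\tilde g_{\tau_j}$, and record the resulting integrand as a rational function of $b$ times a phase. Pulling out the factor $e(-m_j\tau_j)$ and completing the square turns the remaining integral into a standard one that matches the integral representation
$$\Psi(a,b;z)=\frac{1}{\Gamma(a)}\int_0^{\infty}e^{-zr}r^{a-1}(1+r)^{b-a-1}\,dr,$$
so that each part reduces to the formulas for $\Psi_{\pm(s_0+1)}(s,\,\pm 4\pi m_jv_j)$ displayed in the statement. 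The $\gamma(\V_{\sigma_j})$ factor is absorbed in the standard local Weil-representation computation and matches the normalization of $W_{\sigma_j}$.

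Next I would evaluate at $s=s_0$. For part (i), $\Psi_{-s_0-1}(s_0,z)=\Psi(0,s_0+1;z)=1$ by the paper's convention, which gives the value at $s_0$; the logarithmic derivative follows by differentiating $\Psi(\tfrac12(1-s_0-1+s),s+1;z)$ at $s=s_0$ inside the integral representation, which produces exactly $J(s_0,4\pi m_jv_j)$, plus contributions from differentiating $v_j^{(s-s_0)/2}$, $(2\pi m_j)^s$ and $\Gamma(\tfrac{s+s_0}{2}+1)$. For part (ii), note that $\Psi_{s_0+1}(s_0,z)=\Psi(s_0+1,s_0+1;z)$, and the classical identity $\Gamma(-s_0,z)=e^{-z}\Psi(s_0+1,s_0+1;z)$ (for $z>0$) combined with $1/\Gamma(\tfrac{s-s_0}{2})=\tfrac{s-s_0}{2}+O((s-s_0)^2)$ near $s_0$ yields the vanishing of the value and the claimed formula for the derivative after cancelling $e^{\pm z}$. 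For part (iii), the $m=0$ integral collapses to a beta integral and is evaluated using $B$- and $\Gamma$-function identities; the pole of $\Gamma(\tfrac{s-s_0}{2})$ forces vanishing at $s_0$ when $n>0$, and a direct Laurent expansion gives the $\pi/(2^{s_0}s_0)v_j^{-s_0}$ derivative; the case $n=0$ (where $s_0=0$) is obtained by taking the limit $s_0\to 0$ in the general formula.

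The main obstacle is bookkeeping rather than analysis: keeping the normalizations $\gamma(\V_{\sigma_j})$, the rescaling of the $\Psi$-argument by $4\pi$ versus $2\pi$, the powers of $v_j$, and the sign conventions (\emph{i.e.}\ that the paper encodes the $m<0$ case via the second solution of Whittaker's equation) all consistent with the conventions in \cite{KRYComp}. Once this dictionary is in place, the four statements of the lemma follow by direct specialization and a one-variable Taylor expansion at $s=s_0$.
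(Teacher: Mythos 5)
Your proposal follows exactly the route the paper takes: the paper gives no proof of this lemma at all, but simply cites \cite[Proposition 15.1, (15.10) and (15.11)]{KRYComp} and reads off the stated formulas after the normalization $W_{\sigma_j}(\tau_j,s,m,\kappa)=\gamma(\V_{\sigma_j})^{-1}e(-m_j\tau_j)W_{m_j,\sigma_j}(\tilde g_{\tau_j},s,\Phi_{\R}^{(n+2)/2})$, which is precisely your plan. Your additional sketch of the underlying Iwasawa/confluent-hypergeometric computation and the specialization at $s=s_0$ (using $\Psi(0,b;z)=1$, the identity $\Psi(s_0+1,s_0+1;z)=e^{z}\Gamma(-s_0,z)$, and the simple zero of $1/\Gamma(\tfrac{s-s_0}{2})$) is consistent with how those formulas are obtained in the cited reference.
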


Using the above notation and (\ref{WeilIndex}), we have
\begin{equation} \label{eq: Eisenstein}
E(\tau,s, \mu,\kappa) = \sum_{m \in F} B(v, s, m, \mu) q^m
\end{equation}
where
\begin{equation} \label{eq4.20}
B(v, s, m, \mu) =-\frac{1}{([L':L] D)^{\frac{1}2}}\prod_{\mathfrak p
<\infty} W_{\mathfrak p}(s, m, \mu) \cdot \prod_{j=1}^d W_{\sigma_j}(\tau_j, s,
m, \kappa),
\end{equation}
for $m \ne 0$ and
\begin{equation} \label{eq4.21}
 B(v, s, 0,\mu)= \chi_\mu(0) v^{\frac{1}2 (s-s_0)} -\frac{1}{([L':L] D)^{\frac{1}2}}
 \prod_{\mathfrak p <\infty} W_{\mathfrak p}(s, 0, \mu) \cdot \prod_{j=1}^d W_{\sigma_j}(\tau_j, s, 0, \kappa).
\end{equation}

\subsection{The derivative of $E_L(\tau, s, \kappa)$, the case $n\ge 1$}
\label{sect:4.6}

In this subsection, we assume $n \ge 1$, so $s_0=n/2 >0$.

\begin{proposition}  \label{prop4.7}
Assume that $V$ is anisotropic or has  Witt rank over $F$ less than $n$.
 Then $E(\tau, s_0, \mu,\kappa)$ is a holomorphic Hilbert modular form of weight $\kappa$. Write
$B(m, \mu) =B(v, s_0,  m, \mu)$.

(1) \quad When $ m\gg 0$, one has
$$
B'(v, s_0, m, \mu) = \frac{1}2 B(m, \mu) \sum_{j=1}^d J(s_0, 4 \pi
m_i v_j) + \beta(m, \mu)
$$
for some number $\beta(m, \mu)$ independent of $v$.

(2) \quad When there is a  $j$ with $m_j <0$, one has
$$
B'(v, s_0, m, \mu)=D(m, \mu) v_j^{-s_0} \Gamma(-s_0, -4 \pi m_j v_j)
$$
for some constant $D(m, \mu)$ independent of $v$.  Moreover, $D(m, \mu)=0$ if more than one $m_j <0$.

(3) \quad The constant term is given by $B(0, \mu) =\chi_\mu(0)$ and
$$
B'(v, s_0, 0, \mu) = \frac{1}2 \chi_\mu(0) \log \norm(v) + D(0, \mu) \norm(v)^{-s_0}$$
for some constant $D(0, \mu)$ independent of $v$. Finally, $D(0, \mu)=0$ unless $d=1$ and $V$ is isotropic.
\end{proposition}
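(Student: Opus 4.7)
The plan is to use the Fourier expansion \eqref{eq: Eisenstein}--\eqref{eq4.21} and the local Whittaker computations of Lemmas \ref{lem:Whittakerfinite} and \ref{lem:Whittakerinfty}, differentiating factor by factor at $s=s_0$. First I would establish the holomorphy statement: since $m\in F$ satisfies $\sigma_j(m)=0$ if and only if $m=0$, each non-zero $m$ is either totally positive, totally negative, or mixed. If at least one $m_j<0$, then by Lemma \ref{lem:Whittakerinfty}(ii) the archimedean factor $W_{\sigma_j}(\tau_j,s_0,m,\kappa)$ vanishes, so $B(v,s_0,m,\mu)=0$. For $m=0$ and $n\geq 1$, Lemma \ref{lem:Whittakerinfty}(iii) gives $W_{\sigma_j}(\tau_j,s_0,0,\kappa)=0$, so the product term in \eqref{eq4.21} vanishes and $B(0,\mu)=\chi_\mu(0)$. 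Only the $m\gg 0$ coefficients survive, proving holomorphy once we know holomorphy of $E_L(\tau,s,\kappa)$ in $s$ at $s_0$ (which follows from the Witt rank hypothesis via the standard theory, see e.g.~\cite{Ku:Integrals}).

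For part (1), when $m\gg 0$ all local Whittaker functions are non-zero at $s_0$, so $B(v,s_0,m,\mu)\neq 0$ and logarithmic differentiation gives
\begin{equation*}
\frac{B'(v,s_0,m,\mu)}{B(m,\mu)}=\sum_{\mathfrak p<\infty}\frac{W_{\mathfrak p}'(s_0,m,\mu)}{W_{\mathfrak p}(s_0,m,\mu)}+\sum_{j=1}^{d}\frac{W_{\sigma_j}'(\tau_j,s_0,m,\kappa)}{W_{\sigma_j}(\tau_j,s_0,m,\kappa)}.
\end{equation*}
The finite sum is $v$-independent by Lemma \ref{lem:Whittakerfinite}(1), and by Lemma \ref{lem:Whittakerinfty}(i) the $v$-dependent part of each archimedean term is exactly $\tfrac{1}{2}J(s_0,4\pi m_j v_j)$. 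The remaining $v$-independent pieces collect into $\beta(m,\mu)$.

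For part (2), write $B(v,s,m,\mu)$ as a product over places and apply the Leibniz rule. If exactly one index $j_0$ has $m_{j_0}<0$, then $W_{\sigma_{j_0}}(\tau_{j_0},s_0,m,\kappa)=0$ while all other factors are non-zero at $s_0$, so only the term differentiating $W_{\sigma_{j_0}}$ survives. Plugging in the explicit value of $W_{\sigma_{j_0}}'(\tau_{j_0},s_0,m,\kappa)$ from Lemma \ref{lem:Whittakerinfty}(ii) yields the claimed shape with $D(m,\mu)$ equal to a $v$-independent product of Whittaker values at the other places times $-2^{-s_0}\pi/(([L':L]D)^{1/2})$. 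If two or more indices have $m_j<0$, then every term in the Leibniz expansion contains at least one undifferentiated vanishing factor, forcing $B'(v,s_0,m,\mu)=0$, hence $D(m,\mu)=0$.

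For part (3), split $B(v,s,0,\mu)$ as in \eqref{eq4.21} into the degenerate term $\chi_\mu(0)v^{(s-s_0)/2}$ and the product term. Differentiating the first at $s=s_0$ gives $\tfrac{1}{2}\chi_\mu(0)\log\norm(v)$ since $v^{(s-s_0)/2}=\prod_j v_j^{(s-s_0)/2}$. For the product term, by Lemma \ref{lem:Whittakerinfty}(iii) every archimedean factor vanishes at $s_0$ when $n>0$; thus if $d>1$ every term of the Leibniz expansion still contains at least one vanishing factor and the derivative is zero. Only when $d=1$ can the derivative be non-zero, in which case it equals $W_{\sigma_1}'(\tau_1,s_0,0,\kappa)$ times $\prod_{\mathfrak p}W_{\mathfrak p}(s_0,0,0)$; using Lemma \ref{lem:Whittakerinfty}(iii) one reads off the $v_1^{-s_0}=\norm(v)^{-s_0}$ shape, and by Lemma \ref{lem:Whittakerfinite}(4) the finite product, and hence $D(0,\mu)$, vanishes unless $V$ is isotropic at every finite place, which under the hypothesis on Witt rank is the remaining case. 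The main bookkeeping obstacle throughout is the vanishing pattern of the archimedean Whittaker factors and keeping careful track of which term of the Leibniz rule survives at $s_0$; once that is organized, each formula reduces to a direct substitution from Lemma \ref{lem:Whittakerinfty}.
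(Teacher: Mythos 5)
Your overall strategy (Fourier expansion, local Whittaker lemmas, Leibniz rule at $s=s_0$) is the same as the paper's, and parts (1) and (2) are essentially right in outline. But there is a genuine gap in your treatment of the constant term, which is exactly where the paper spends most of its effort. The product of the finite Whittaker functions at $m=0$ does not converge to something harmless at $s=s_0$: grouping the unramified places gives $\prod_{\mathfrak p}W_{\mathfrak p}(s,0,\mu)=\frac{L^{S}(s,\chi)}{L^{S}(s+1,\chi)}W_{S}(s,0,\mu)$ for $n$ even (and $\frac{\zeta^{S}(2s)}{\zeta^{S}(2s+1)}W_S(s,0,\mu)$ for $n$ odd), and the numerator has a simple \emph{pole} at $s=s_0$ precisely in the borderline cases $n=2$ with $\chi$ trivial and $n=1$. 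So your argument ``each archimedean factor vanishes at $s_0$, hence the product term vanishes, and for $d>1$ its derivative vanishes too'' is not valid: the pole can absorb one of the $d$ archimedean zeros, leaving $\ord_{s=s_0}M(s,\mu)=d-1+\ord_{s=s_0}W_S(s,0,\mu)$, which is not enough when $d\le 2$. This is where the hypothesis on the Witt rank must actually be used: via the even Clifford algebra one identifies $V$ with (the trace-zero part of) a quaternion algebra $B$, shows $B$ must be division, hence ramified at one or two finite primes, and then Lemma \ref{lem:Whittakerfinite}(4) gives $W_{\mathfrak p}(s_0,0,\mu)=0$ at those primes, supplying the missing orders of vanishing. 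Deferring this to ``the standard theory'' skips the core of the proof of holomorphy and of the statement about $D(0,\mu)$.

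A smaller but real inaccuracy: in part (1) you assert that for $m\gg 0$ all local Whittaker functions are nonzero at $s_0$, so that $B(m,\mu)\neq 0$ and logarithmic differentiation applies. This is false in general; e.g.\ by Lemma \ref{lem:Whittakerfinite}(2) the factor $W_{\mathfrak p}(s_0,m,0)$ vanishes when $\chi_{\mathfrak p}(\varpi_{\mathfrak p})=-1$ and $\ord_{\mathfrak p}(m\partial)$ is odd. The paper handles the case $W_{S(m)}(s_0,m,\mu)=0$ separately: then $B(m,\mu)=0$ and the only surviving Leibniz term involves $W'_{S(m)}$ times $v$-independent archimedean values, so $B'(v,s_0,m,\mu)$ is constant in $v$ and the stated identity holds with that constant as $\beta(m,\mu)$. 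This case split should be added; with it, and with the order-of-vanishing analysis for the constant term restored, your argument matches the paper's.
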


\begin{proof}
We first assume that $n$ is even and $m\neq 0$.
Let $S(m)$ be the
set of finite primes $\mathfrak p$ of $F$ for which  $L_{\mathfrak p}$ is not unimodular
or $\ord_{\mathfrak p}(m\partial) >0$. This is a finite set. Lemma
\ref{lem:Whittakerfinite} implies (write
$C=-([L':L]D)^{-1/2}$ temporarily) that
$$
B(v, s, m, \mu)=\frac{C}{ L^{S(m)}(s+1, \chi)} W_{S(m)}(s,m,
\mu) \prod_{j=1}^d W_{\sigma_j}(\tau_j, s, m, \kappa).
$$
Here for a finite set $S$ of finite primes of $F$, we denote
\begin{align*}
L^S(s, \chi) &=\prod_{\mathfrak p \notin S} L_{\mathfrak p}(s, \chi_{\mathfrak p}),
\\
W_S(s, m, \mu) &=\prod_{\mathfrak p \in S} W_{\mathfrak p}(s, m, \mu).
\end{align*}

When $m \gg 0$ is totally positive, Lemma \ref{lem:Whittakerinfty}
implies that
$$
B(m, \mu)=B(v, s_0, m, \mu) = \frac{(2\pi)^{d (s_0+1)}
\norm(m)^{s_0}  C }{\Gamma(s_0+1)^d L^{S(m)}(s_0+1, \chi)}
W_{S(m)}(s_0, m, \mu)
$$
is independent of $v =\Im(\tau)$. If $W_{S(m)}(s_0, m, \mu) =0$,
i.e., $B(m, \mu)=0$, then
$$
B'(v, s_0, m, \mu) =\frac{(2\pi)^{d (s_0+1)} \norm(m)^{s_0}  C
}{\Gamma(s_0+1)^d L^{S(m)}(s_0+ 1, \chi)}W_{S(m)}'(s_0, m, \mu)
$$
is independent of $v$, so (1) holds with $B(m, \mu)=0$ and $\beta(m, \mu) =B'(v, s_0, m, \mu)$.  If $W_{S(m)}(s_0, m, \mu) \ne 0$, i.e.,
$B(m, \mu)\ne 0$, then Lemma \ref{lem:Whittakerinfty} implies
\begin{align*}
\frac{B'(v, s_0, m, \mu)}{B(m, \mu)}
 &= -\frac{L^{S(m), \prime}(s_0+ 1, \chi)}{L^{S(m)}(s_0+1, \chi)}
   + \frac{W_{S(m)}'(s_0, m, \mu)}{W_{S(m)}(s_0, m, \mu)}
    + \sum_j \frac{W_{\sigma_j}'(\tau_j, s_0, m, \kappa)}{W_{\sigma_j}(\tau_j, s_0, m, \kappa)}
    \\
    &= \frac{1}2 \sum_j J(s_0,  4 \pi m_j v_j) +\frac{\beta(m,
    \mu)}{B(m, \mu)}
\end{align*}
for some constant $\beta(m, \mu)$. This proves the case $m \gg 0$.

When $m_j <0$, then $W_{\sigma_j}(\tau_j, s_0, m, \kappa) =0$ and
$$
W_{\sigma_j}'(\tau_j, s_0, m, \kappa)
 =2^{-s_0} \pi\, v_j^{-s_0}\, \Gamma(-s_0, -4 \pi m_j v_j)
$$
by Lemma \ref{lem:Whittakerinfty}. Since $L(s_0+1, \chi) \ne 0$,
the case $m_j <0$ is now clear,

Finally we consider the case $m=0$. The constant term is by Lemma
\ref{lem:Whittakerfinite} and Lemma \ref{lem:Whittakerinfty} given by
$$
E_0(\tau, s, \mu,  \kappa) = \chi_\mu(0) \norm(v)^{\frac{s-s_0}2}  +M(s, \mu) \norm(v)^{-\frac{s+s_0}2},
$$
with
$$
 M(s, \mu)= C   \frac{L^{S}(s,
 \chi)}{L^S(s+1, \chi)} W_{S}(s, 0, \mu)
 \frac{A(s)^d}{\Gamma(\frac{s-s_0}2)^d}.
$$
Here $S$ is the set of finite primes of $F$ where $L_{\mathfrak p}$ is not
unimodular, and
$$
A(s) =\frac{2^{1-s+s_0} \pi \Gamma(s)}{\Gamma(\frac{s+s_0}2)}.
$$
%
So
$$
E_0(\tau, s_0, \mu, \kappa) =\chi_\mu(0) +M(s_0, \mu) \norm(v)^{-s_0},
$$
and
$$
E_0'(\tau, s_0, \mu, \kappa) = \frac{1}2 (\chi_\mu(0) +M(s_0, \mu)) \log \norm(v)
   + M'(s_0, \mu) \norm(v)^{-s_0}.
$$
Now we have to analyze $\ord_{s=s_0} M(s, \mu)$ case by case.

{\bf Case 1}: When $n>2$ or $n=2$ with $\chi$ non-trivial, the $L$-functions in $M(s, \mu)$ have no zeros or  poles at $s_0$, so
$$
\ord_{s=s_0} M(s_0, \mu) \ge d.
$$
Consequently, $M(s_0, \mu) =0$.  Moreover  $M'(s_0, \mu) =0$ if  $d>1$. When $d=1$ and $V$ is anisotropic, $V$ has to be anisotropic at
some finite prime $\mathfrak p$ (since it is isotropic at infinity). This implies
$$
W_\mathfrak p(1, s_0, \mu) =0,
$$
and
$$
\ord_{s=s_0} M(s, \mu) \ge 1+1 =2.
$$

{\bf Case 2}: When $n=2$ and $\chi$ is trivial, $L^S(s, \chi) =\zeta^S(s)$ has a simple pole at $s=s_0=1$, and one has also
$$
\ord_{s=s_0} M(s_0, \mu) = d-1 + \ord_{s=s_0} W_S(s, 0, \mu).
$$
When $d>2$, one has again $M(s_0, \mu) =M'(s_0, \mu) =0$.

Let $C^+(V)$ be the even Clifford algebra of $V$ over $F$. Then $C^+(V) =B \times B$ for a quaternion algebra $B$ over $F$
by \cite[Lemma 0.2]{KRCrelle}, since the quadratic extension $\kay/F$ associated to $\chi_V=\chi$ is $F \times F$. Furthermore,
\cite[Lemma 0.3]{KRCrelle} implies that
$$
(V, Q) \cong (B, \alpha \det)
$$
for some $\alpha \in F^\times$, where $\det $ is the reduced norm of $B$.
Our assumption on the signature of $V$ implies that $B$ is split at
$\sigma_1$,   and ramified at the other  infinite primes.

If $d=2$, then there is at least one finite prime $\mathfrak p \in S$ such that $B$ is ramified, i.e, $V_{\mathfrak p}$ is
anisotropic. This implies
$$
W_{\mathfrak p}(s_0, 0, \mu) =0, \quad W_S(s_0, 0, \mu) =0,
$$
and consequently
$$\ord_{s=s_0}M(s, 0, \mu) \ge 2.
$$

If $d =1$, then $B$ is either $M_2(\mathbb Q)$ or a division quaternion algebra. But $B=M_2(\mathbb Q)$  implies that the Witt rank of
$V$ is $2$, contradicting our assumption. So $B$ is an indefinite division algebra, and there are at least two finite primes
$p$ and $q$ at which $V$ is anisotropic. This implies
$$
W_p(s_0, 0, \mu) =W_q(s_0, 0, \mu) =0,
$$
and so
$$
\ord_{s=s_0} M(s, \mu) =\ord_{s=s_0} W_S(s, 0, \mu) \ge 2.
$$
In summary we have proved for $n$ even and under our assumption on $V$ that $M(s_0, \mu)=0$. Hence
$$
E_0(\tau, s_0, \mu, \kappa) =\chi_\mu(0)
$$
is holomorphic, and thus $E(\tau, s_0, \mu, \kappa)$ is holomorphic.  Moreover,
$$
E_0'(\tau, s_0, \mu, \kappa) =\frac{1}2 \chi_\mu(0) \log \norm(v) + D(0, \mu) \norm(v)^{-s_0}
$$
with $D(0, \mu):=M'(s_0, \mu) =0$ unless $d=1$ and $V$ is isotropic.  This proves the assertion when $n$ is even.

Next, we assume that $n$ is odd. The proof  is similar  with a  slight change due to the different $L$-functions showing up
 in the formulas in Lemma \ref{lem:Whittakerfinite}. We deal with the constant term which is trickier and leave the other cases
 to the reader. By Lemma \ref{lem:Whittakerfinite} and Lemma \ref{lem:Whittakerinfty} we have
$$
E_0(\tau, s, \mu,  \kappa) = \chi_\mu(0) \norm(v)^{\frac{s-s_0}2}  +M(s, \mu) \norm(v)^{-\frac{s+s_0}2},
$$
with
$$
 M(s, \mu)= C_1   \frac{\zeta^{S}(2s)}{\zeta^S(2s+1)} W_{S}(s, 0, \mu)
 \frac{A(s)^d}{\Gamma(\frac{s-s_0}2)^d},
$$
for some constant $C_1$. Here $A(s)$ and $S$ are the same as above. The case $n >1$  or $d >2$ is clear, notice that $V$ is always
isotropic when $d=1$ and $n \ge 3$.
Now assume that $n=1$ and  $d =1, 2$. In this case, $\zeta^S(2s)$ has a simple pole at $s=s_0=\frac{1}2$, and so
$$
\ord_{s=s_0} M(s, \mu) = d-1 + \ord_{s=s_0} W_S(s, 0, \mu).
$$
Since $\dim V=3$, $C^+(V)=B$ is a quaternion algebra over $F$ and $(V, Q) \cong (B^0 , \alpha \det )$ for some $\alpha \in F^\times$,
where $B^0$ is the subspace of $B$ of trace zero elements.

When $d=2$, $B$ is split  at $\sigma_1$ and ramified at $\sigma_2$. So $B$ has to be ramified at some finite prime $\mathfrak p$, and
$V$ is anisotropic at $\mathfrak p$. This implies that $\mathfrak p \in S$, and
$$
W_\mathfrak p(s_0, 0, \mu)=0, \quad  W_S(s_0, 0, \mu) =0.
$$
So
$
\ord_{s=s_0} M(s, \mu) \ge 2
$.

When $d=1$, $B$ has to be an indefinite division quaternion over $\Q$ since $V$ is anisotropic by our assumption in this case.
So $B$ has to be ramified at least at two finite primes $p$ and $q$, i.e., $V$ is anisotropic at $p$ and $q$. So
$$
\ord_{s=s_0} M(s, \mu) =\ord_{s=s_0} W_S(s, 0, \mu) \ge 2.
$$
This proves that
$$
E_0(\tau, s_0, \mu, \kappa) =\chi_\mu(0)
$$
is holomorphic, and
$$
E_0'(\tau, s_0, \mu, \kappa) =\frac{1}2 \chi_\mu(0) \log\norm(v) + D(0,\mu) \norm(v)^{-s_0}
$$
with $D(0, \mu) =M'(s_0, \mu) =0$ unless $d=1$ and $V$ is isotropic.
\end{proof}

\begin{corollary}
Let the notation and assumption be as in  Proposition \ref{prop4.7}.   Write
$$
\mathcal E_L(\tau) =E_L'(\tau, s_0, \kappa).
$$
Then we have
$$
\mathcal E_L(\tau) =\sum_{j=0}^{d} \mathcal E_L^{(j)}(\tau) ,
$$
where, for $ j >0$,
\begin{align*}
\calE_L^{(j)}(\tau)&:= \frac{1}{2}\log(v_j)\chi_0+ \norm(v)^{-s_0} \sum_{\mu \in L'/L} D(0, \mu) \chi_\mu \notag
\\
&\phantom{=} +\frac{1}2 \sum_{\mu\in L'/L}\sum_{\substack{m\in Q(\mu)+ \partial^{-1}\\m\gg0}}
B_L(m,\mu)J(s_0,4\pi m_j v_j)q^m\chi_\mu   \\
&\phantom{=} +\sum_{\mu\in L'/L}\sum_{\substack{m\in Q(\mu) + \partial^{-1}\\m_j<0\\ \text{$m_i>0$ for $i\neq j$}}}
D(m, \mu) v_j^{-s_0} \Gamma(-s_0, - 4 \pi m_j v_j)  q^m\chi_\mu, \notag
\end{align*}
is holomorphic in all $\tau_i$ with $i \ne j$ (but non-holomorphic in
$\tau_j$),  and
\begin{align*}
\calE_L^{(0)}(\tau):= \calE_L(\tau)-\calE_L^{(1)}(\tau)-\dots-\calE_L^{(d)}(\tau) +(d-1) \norm(v)^{-s_0} \sum_{\mu \in L'/L} D(0, \mu) \chi_\mu.
\end{align*}
 is holomorphic in $\tau$. Finally, $D(0, \mu)=0$ unless $d=1$ and $V$ is isotropic.
\end{corollary}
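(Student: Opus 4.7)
The statement is a bookkeeping exercise that reorganizes the data of Proposition \ref{prop4.7}. My plan is to differentiate the Fourier expansion \eqref{eq: Eisenstein}--\eqref{eq4.21} of $E_L(\tau,s,\kappa)$ coefficient-wise at $s=s_0$, obtaining
$$
\calE_L(\tau)=\sum_{\mu\in L'/L}\sum_{m\in F} B'(v,s_0,m,\mu)\, q^m\,\chi_\mu,
$$
and then to invoke Proposition \ref{prop4.7} to identify $B'(v,s_0,m,\mu)$ in each of its four cases: totally positive $m$; an exceptional component $m_j<0$ with the remaining $m_i>0$; more than one negative component, where the coefficient vanishes; and the constant term $m=0$.

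Next I would distribute these terms across the indices $j=1,\dots,d$. Writing $\log \norm(v)=\sum_j\log v_j$ in the constant term and splitting the sum $\sum_j J(s_0,4\pi m_j v_j)$ appearing in the totally positive Fourier coefficients of Proposition \ref{prop4.7}(1), I would assign to $\calE_L^{(j)}$ the piece $\tfrac12\log(v_j)\chi_0$ of the constant term, the $j$-th summand $\tfrac12 B_L(m,\mu)J(s_0,4\pi m_j v_j)q^m\chi_\mu$ for each totally positive $m$, and the mixed-sign coefficient $D(m,\mu)v_j^{-s_0}\Gamma(-s_0,-4\pi m_j v_j)q^m\chi_\mu$ attached to those $m$ with $m_j<0$ and $m_i>0$ for $i\neq j$, together with an auxiliary copy of $\norm(v)^{-s_0}\sum_\mu D(0,\mu)\chi_\mu$ as bookkeeping.

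To verify that $\calE_L^{(j)}$ is holomorphic in $\tau_i$ for every $i\neq j$, I would use that $q^m=\prod_i e(m_i\tau_i)$ is holomorphic in each $\tau_i$ separately, so the non-holomorphic dependence of the Fourier coefficients listed above is confined to $v_j$. The auxiliary term $\norm(v)^{-s_0}\sum_\mu D(0,\mu)\chi_\mu$ is the one piece that would spoil this; by the final assertion of Proposition \ref{prop4.7}(3), however, $D(0,\mu)=0$ unless $d=1$ and $V$ is isotropic, and when $d=1$ there are no indices $i\neq j$, so the claim holds vacuously in that range.

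Finally, $\calE_L^{(0)}$ is defined by subtracting $\sum_{j=1}^d\calE_L^{(j)}$ from $\calE_L$ and restoring $(d-1)$ copies of $\norm(v)^{-s_0}\sum_\mu D(0,\mu)\chi_\mu$, in order to balance the $d$ bookkeeping copies against the single copy present in $\calE_L$. What remains is precisely the sum of the holomorphic Fourier coefficients $\beta(m,\mu)q^m\chi_\mu$ for totally positive $m$ supplied by Proposition \ref{prop4.7}(1), so $\calE_L^{(0)}$ is holomorphic in $\tau$. The final assertion about $D(0,\mu)$ is immediate from Proposition \ref{prop4.7}(3). The only real subtlety in the argument is that the factor $\norm(v)^{-s_0}$ is non-holomorphic in every $\tau_i$; Proposition \ref{prop4.7}(3) is exactly the vanishing statement needed to dispose of this obstruction when $d>1$.
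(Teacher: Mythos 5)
Your proposal is correct and follows essentially the same route as the paper: the decomposition is pure bookkeeping from Proposition \ref{prop4.7}, the residual term $\calE_L^{(0)}$ collapses to $\sum_{m\gg 0}\beta(m,\mu)q^m\chi_\mu$ which is visibly holomorphic, and the only threat to holomorphicity of $\calE_L^{(j)}$ in $\tau_i$ for $i\neq j$ is the $\norm(v)^{-s_0}D(0,\mu)$ term, which is disposed of exactly as you say via $D(0,\mu)=0$ unless $d=1$, the $d=1$ case being vacuous.
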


\begin{proof}
Everything is immediate from Proposition \ref{prop4.7} except the claims about holomorphicity.
One has by Proposition  \ref{prop4.7} that
$$
\calE_L^{(0)}(\tau)= \sum_{\mu \in L'/L}\sum_{\substack{m \gg 0 \\ \mu \in L'/L}}\beta( m, \mu) q^m \chi_\mu,
$$
which is visibly holomorphic.

  When $d \ge 2$,  one has $D(0, \mu)=0$, and thus $\calE^{(j)}(\tau)$ is holomorphic in $\tau_i$ for $i\neq j$.
  When $d=1$, the claim on $\calE^{(j)}$ is empty for $j=1$.
\end{proof}

\subsection{The derivative of $E_L(\tau, s_0, \kappa)$, the case $n=0$}

\label{sect:4.7}

The case $n=0$ is special and has a particularly nice formula. For a non-zero $m \in F$, let $\Diff(\V, m)$ be the set of primes $w$ of $F$ such that
$\V_w$ does not represent $m$, following Kudla \cite{Ku:Annals} ($\mathbb V$ is defined in Section  \ref{sect:4.4}).
 Then Kudla proved that $|\Diff(\V, m)|$ is finite and odd, and for every $w \in \Diff(\V, m)$,
 the local Whittaker function $W_{\mathfrak p}(s, m, \mu)$ (when $w=\mathfrak p <\infty$) or
  $W_{\sigma_j}(\tau_j, s, m, \kappa)$ (when $w =\sigma_j$) vanishes at the center $s_0=0$. Let $\kay$ be the totally imaginary quadratic extension of $F$ associated to $\chi=\chi_{\V} =\chi_V$. It is easy to check that $w \in \Diff(\V, m)$ implies that $w$ is non-split in $\kay$. We normalize
\begin{align}
W_{m, \mathfrak p}^*(s,  \chi_\mu) &=|\partial D_{\kay/F}|_{\mathfrak p}^{-\frac{1}2}  L_{\mathfrak p}(s+1, \chi_{\mathfrak p})
  \frac{W_{m, \mathfrak p}(1, s, \chi_\mu)}{\gamma(\V_\mathfrak p)}
\\
 &=\frac{L(s+1, \chi_\mathfrak p)}{([L_\mathfrak p':L_\mathfrak p] | D_{\kay/F}|_\mathfrak p)^{\frac{1}2}} W_\mathfrak p(s, m, \mu), \notag
 \end{align}
and
\begin{align}
W_{m, \sigma_j}^*(\tau_j, s,  \kappa)
 &=L_{\sigma_j}(s+1, \chi_{\sigma_j}) \frac{W_{m, \sigma_j}(\tau_j, s,  \Phi_{\sigma_j}^+)}{\gamma(\V_{\sigma_j})} e(-m_j \tau_j) \\
 \nonumber
 &= L_{\sigma_j}(s+1, \chi_{\sigma_j})W_{\sigma_j}(\tau_j, s, m, \kappa).
\end{align}
Here $D_{\kay/F}$ is the relative discriminant of $\kay/F$ (we also denote $\partial_{\kay/F}$ for the relative different), and
$$
L_{\sigma_j}(s, \chi_{\sigma_j}) = \pi^{-\frac{s+1}2} \Gamma(\frac{s+1}2).
$$
 Then we have the following result due to Howard and the second author  \cite[Theorem 6.2.7]{HY}.

 \begin{lemma} \label{lem: integral} Let the notation be as above.

 (1) \quad  One has
 $W_{m, \mathfrak p}^*(0,  \chi_\mu)  \in \mathbb Q$, and  $W_{m, \mathfrak p}^{*, \prime}(0, \chi_\mu) \in \mathbb Q \log \norm(\mathfrak p)$.

 (2) \quad One has
 $W_{m, \mathfrak p}^*(0,  \chi_\mu)  \in \mathbb Z$  unless   $\mu \notin L_\mathfrak p$  and $\mathfrak  p | 2$.

 (3) \quad When  $(L_\mathfrak p, Q) \cong (\OO_{\kay, \mathfrak p}, \xi_\mathfrak p x \bar x)$ with $\ord_\mathfrak p \xi_\mathfrak p =- \ord_\mathfrak p \partial$, one has
 $W_{m, \mathfrak p}^{*, \prime}(0, \chi_\mu) \in \mathbb Z\log \norm(\mathfrak p)$,  unless  $\mu \notin L_\mathfrak p$  and  $ \mathfrak  p | 2$.
\end{lemma}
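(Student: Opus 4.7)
The plan is to reduce the three claims to explicit formulas for the local Whittaker function $W_{\mathfrak p}(s,m,\mu)$, combined with the polynomial structure guaranteed by Lemma~\ref{lem:Whittakerfinite}(1). Since $n=0$, we have $\dim V=2$ and $V$ corresponds to a CM field $k/F$, so at each finite prime $\mathfrak p$ the completion $V_{\mathfrak p}$ is (up to a scalar) isometric to $(k_{\mathfrak p}, \xi_{\mathfrak p} x\bar x)$ for some $\xi_{\mathfrak p}\in F_{\mathfrak p}^\times$. This reduces the evaluation of $W_{\mathfrak p}(s,m,\mu)$ to a classical local representation density computation, splitting into the three cases (i)~$\mathfrak p$ split in $k$, (ii)~$\mathfrak p$ inert, and (iii)~$\mathfrak p$ ramified.

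For part~(1), I would use Lemma~\ref{lem:Whittakerfinite}(1), which says $W_{\mathfrak p}(s,m,\mu)\in\mathbb Q[N(\mathfrak p)^{-s}]$. The normalizing factor $([L_{\mathfrak p}':L_{\mathfrak p}]|D_{k/F}|_{\mathfrak p})^{-1/2}$ in the definition of $W_{m,\mathfrak p}^\ast$ may a priori contain a square root, but this square root is cancelled against the denominator in the volume computation coming from (4.17)--(4.18) together with the local discriminant identity $[L_{\mathfrak p}':L_{\mathfrak p}]\cdot|\partial_{\mathfrak p}|^2 = |D_{k/F}|_{\mathfrak p}^{-1}\cdot (\text{unit square})$ for the CM lattice. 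The $L$-factor $L_{\mathfrak p}(s+1,\chi_{\mathfrak p})$ is rational in $N(\mathfrak p)^{-s}$, so $W_{m,\mathfrak p}^\ast(s,\chi_\mu)\in\mathbb Q(N(\mathfrak p)^{-s})$ is regular at $s=0$ and takes rational values; the derivative at $s=0$ produces a rational multiple of $\log N(\mathfrak p)$ coming from differentiating $N(\mathfrak p)^{-s}$.

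For parts~(2) and~(3), I would invoke the explicit evaluation of $W_{\mathfrak p}(s,m,\mu)$ for the standard CM lattice $(\mathcal O_{k,\mathfrak p},\xi_{\mathfrak p}x\bar x)$ in each of the split/inert/ramified cases. In the split and inert non-dyadic cases the representation numbers are visibly integral (they arise from counting $\mathcal O_{k,\mathfrak p}$-ideals of a given norm), and the differentiation at $s=0$ produces a polynomial in $N(\mathfrak p)^{-s}$ whose coefficients are integers. In the ramified non-dyadic case one uses the explicit evaluation via Gauss sums, which remains integer-valued. The normalization by $L_{\mathfrak p}(s+1,\chi_{\mathfrak p})$ clears the denominator coming from the Euler factor. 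These computations are carried out in detail in \cite{HY}, so I would cite Theorem~6.2.7 there for the nontrivial cases and only sketch how the formula for $W_{m,\mathfrak p}(1,s,\chi_\mu)$ from \cite{KRYComp} specializes.

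The main obstacle is the dyadic case $\mathfrak p\mid 2$. For $\mu\in L_{\mathfrak p}$ the standard representation-theoretic computation still goes through because the lattice $L_{\mathfrak p}=\mathcal O_{k,\mathfrak p}$ is well-behaved. However, for $\mu\notin L_{\mathfrak p}$ at a dyadic prime, the coset $\mu+L_{\mathfrak p}$ can introduce a half-integral Gauss sum whose value or derivative is only rational, not integral; this is precisely the exception flagged in the statement. So I would formulate and carry out the dyadic analysis only under the hypothesis $\mu\in L_{\mathfrak p}$, and observe that in the excluded dyadic/non-integral $\mu$ case the obstruction is a single controlled Gauss-sum factor of possible denominator $2$.
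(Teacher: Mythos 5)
Your proposal is correct and follows essentially the same route as the paper: the actual proof delegates the whole lemma to \cite[Theorem~6.2.7]{HY}, and your rationality argument for part~(1) via Lemma~\ref{lem:Whittakerfinite}(1) together with the split/inert/ramified sketch for parts~(2) and~(3) is just an unpacking of what that citation contains. The one step the paper carries out explicitly that you elide is the renormalization needed to match the conventions of \cite{HY} at primes dividing the different: replacing $\psi_{\mathfrak p}$ by the unramified character $\tilde\psi_{\mathfrak p}(x)=\psi_{\mathfrak p}(\delta^{-1}x)$ and $Q$ by $\delta Q$ gives $W_{m,\mathfrak p}^{*,\psi_{\mathfrak p}}(s,\chi_\mu)=W_{\delta m,\mathfrak p}^{*,\tilde\psi_{\mathfrak p}}(s,\chi_\mu)$, which puts the normalized Whittaker function in exactly the form treated there; this is bookkeeping rather than a gap.
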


\begin{proof}
Let $\delta \in F_\mathfrak p^\times $ with $\delta \OO_\mathfrak p = \partial_{\mathfrak p}$. Let $\tilde\psi_\mathfrak p(x) = \psi_{\mathfrak p}(\delta^{-1} x)$, and let $\tilde{\V}_\mathfrak p = \V_\mathfrak p$ with a different quadratic form $\tilde Q(x) = Q(\delta x)$. Then \begin{equation} \label{eq:psi}
\omega_{\V_\mathfrak p, \psi_\mathfrak p} = \omega_{\tilde{\V}_\mathfrak p, \tilde{\psi}_\mathfrak p}.
\end{equation}
If $\phi \in S(\V_\mathfrak p)$, we view it also as a function in $S(\tilde{\V}_\mathfrak p)$. Then their images in $I(0, \chi_\mathfrak p)$ are the same, and thus the associated Whittaker functions are related via
\begin{equation}
|\delta|_\mathfrak p^{\frac{1}2} W_{\delta m,  \mathfrak p}^{\tilde{\psi}_\mathfrak p}(g, s, \phi) =  W_{m,  \mathfrak p}^{\psi_\mathfrak p}(g, s, \phi).
\end{equation}
Here we use the super script $\psi$ to indicate the dependence of the local Whittaker functions on $\psi$. So one has
\begin{equation} \label{eq: Whittaker-psi}
W_{m, \mathfrak p}^{*, \psi_\mathfrak p}(s, \chi_\mu) =W_{\delta m, \mathfrak p}^{*, \tilde{\psi}_\mathfrak p}(s, \chi_\mu)
\end{equation}
where the right hand side is the normalization  in\cite[Section 6]{HY}. Now our lemma is just \cite[Theorem 6.2.7]{HY}.
\end{proof}

Let $S=S(L)$ be the set of finite primes of $F$ such that $L_{\mathfrak p}$
is not unimodular (with respect to $\psi_{\mathfrak p}$), and for a non-zero $m
\in F$ let $S(m)$ be the union of  $S$ and the set of
finite primes of $F$ dividing $m\partial$. Then Lemma
\ref{lem:Whittakerfinite} and Lemma \ref{lem:Whittakerinfty} imply that
\begin{align} \label{eq: WhittakerValue}
 W_{m, \mathfrak p}^*(0,  \chi_\mu) &=1   &&\ff\mathfrak p \notin S(m), \mathfrak p<\infty, \notag
 \\
 W_{m, \sigma_j}^*(\tau_j, 0,  \kappa) &=2  && \ff m_j >0,
 \\
 W_{m, \sigma_j}^{*, \prime}(\tau_j, 0,  \kappa) &=\Gamma(0, -4 \pi m_j v_j)   && \ff m_j <0. \notag
\end{align}
Let $A= \norm_{F/\Q}(\partial D_{\kay /F})=D\norm_{F/\Q}( D_{\kay /F}) $ where
$D_{\kay/F}$ is the relative discriminant of $\kay/F$, and let
$$
\Lambda(s, \chi) =A^{\frac{s}2} \prod_{\mathfrak p < \infty} L_{\mathfrak p}(s, \chi_{\mathfrak p})
  \prod_{j=1}^d L_{\sigma_j}(s, \chi_{\sigma_j})
$$
be the complete $L$-function of $\chi$. Then $\Lambda(s, \chi)=\Lambda(1-s, \chi)$ and
$\Lambda(1, \chi)\in \mathbb Q$.
Finally, define
\begin{align} \label{eq4.26}
W^{(\infty)}(m, \mu) &=\prod_{\mathfrak p\in S(m)} W_{m \mathfrak p}^*(0, \mu),
\\
W^{(\mathfrak p \infty)}(m, \mu) &=\prod_{\mathfrak p'\in S(m), \mathfrak p'\ne \mathfrak p} W_{m, \mathfrak p}^*(0, \mu). \notag
\end{align}

\begin{proposition} \label{prop4.9} \label{prop: n=0}
Let the notation and assumption be as above.  Then  $E_L(\tau, 0,  \kappa)=0$. Write for $\mu \in \LL'/\LL=L'/L$ as in (\ref{eq: Eisenstein}):
$$
E(\tau, s, \mu,  \kappa) =\sum_{m \in Q(\mu) + \partial^{-1}} B(v, s, m, \mu) q^m \chi_\mu.
$$

(1) \quad  When $\Diff(\V, m) = \{ \mathfrak p\}$  consists of a unique finite prime $\mathfrak p$  of $F$, then
$$
B'(v, 0, m, \mu) = -\frac{2^{d}\log \norm(\mathfrak p)}{\Lambda(1, \chi)} W^{(\mathfrak p \infty)}(m, \mu) \frac{W_{m, \mathfrak p}^{*, \prime}(0, \mu)}{\log \norm(\mathfrak p)}.
$$
Moreover,
$$
\frac{W_{m, \mathfrak p}^{*, \prime}(0, \mu)}{\log \norm(\mathfrak p)} = \frac{1}2 (1 + \ord_\mathfrak p (m \partial))
$$
if $\mathfrak p \notin S(m)$.

(2) \quad  When $\Diff(\V, m) = \{ \sigma_j \}$, one has in particular  $m_j =\sigma_j (m) <0$ and $m_l >0$ for $l \ne j$, and
$$
B'(v, 0, m, \mu) = -\frac{2^{d-1}}{\Lambda(1, \chi)} W^{(\infty)}(m, \mu) \Gamma(0, - 4 \pi m_j v_j).
$$

(3) \quad When  $|\Diff(\V, m) | >1$, one has $B'(v, 0, m, \mu) =0$.

(4) \quad One has
$$
B'(v, 0, 0, \mu) = \chi_\mu(0) \log N(v) + \beta(0, \mu)
$$
for some constant $\beta(0, \mu)$.
\end{proposition}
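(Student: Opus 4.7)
The plan is to repackage the product formula \eqref{eq4.20} in terms of the normalized local Whittaker functions and then exploit the parity of $|\Diff(\V,m)|$ via the product rule.

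First I would rewrite the $m$-th Fourier coefficient. Using the normalizations
$W_{m,\mathfrak p}=([L'_\mathfrak p:L_\mathfrak p]|D_{\kay/F}|_\mathfrak p)^{1/2}\,L_\mathfrak p(s+1,\chi_\mathfrak p)^{-1}W^*_{m,\mathfrak p}$ and $W_{\sigma_j}=L_{\sigma_j}(s+1,\chi_{\sigma_j})^{-1}W^*_{m,\sigma_j}$, the archimedean and non-archimedean normalization constants in \eqref{eq4.20} collapse to $(D\,\norm(D_{\kay/F}))^{-1/2}=A^{-1/2}$, and the product of local $L$-factors assembles into the completed $L$-function. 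This yields the clean identity
\begin{equation*}
B(v,s,m,\mu)\;=\;-\,\frac{A^{s/2}}{\Lambda(s+1,\chi)}\prod_{w\le\infty}W^*_{m,w}(s,\mu)\qquad (m\ne0).
\end{equation*}
By Kudla's theorem $|\Diff(\V,m)|$ is odd, so at least one factor $W^*_{m,w}(0,\mu)$ vanishes; hence $B(v,0,m,\mu)=0$. For $m=0$, I would use \eqref{eq4.21} directly; combining the unramified local formulas in Lemmas \ref{lem:Whittakerfinite} and \ref{lem:Whittakerinfty} with $\Lambda(1,\chi)=\Lambda(0,\chi)$ gives $M(0,\mu)=-\chi_\mu(0)$, so the constant term also vanishes and $E_L(\tau,0,\kappa)=0$.

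Next, differentiate in $s$. Any term coming from the derivative of the prefactor $A^{s/2}/\Lambda(s+1,\chi)$ is multiplied by the full product $\prod_wW^*_{m,w}(0,\mu)$, which already vanishes once $|\Diff|\ge1$. Hence only the Leibniz terms contribute, giving
\begin{equation*}
B'(v,0,m,\mu)\;=\;-\,\frac{1}{\Lambda(1,\chi)}\sum_{w_0}W^{*\prime}_{m,w_0}(0,\mu)\prod_{w\ne w_0}W^*_{m,w}(0,\mu).
\end{equation*}
If $|\Diff(\V,m)|\ge2$, every summand still contains a vanishing factor, yielding (3). If $|\Diff(\V,m)|=\{w_0\}$, only the term with derivative at $w_0$ survives. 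Evaluating the surviving factors by \eqref{eq: WhittakerValue}, each of the $d$ or $d-1$ positive archimedean factors contributes $2$, and the remaining finite factors assemble into $W^{(\mathfrak p\infty)}(m,\mu)$ or $W^{(\infty)}(m,\mu)$; this yields the formulas of (1) and (2) on the nose, and the identification of $W^{*\prime}_{m,\sigma_j}(0,\mu)/L_{\sigma_j}(1,\chi_{\sigma_j})^{-1}$ with $\Gamma(0,-4\pi m_j v_j)$ follows by combining $L_{\sigma_j}(1,\chi_{\sigma_j})=\pi^{-1}$ with Lemma \ref{lem:Whittakerinfty}(ii) at $s_0=0$. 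For the unramified explicit formula in (1), I would take $\mathfrak p\notin S(m)$; since $\mathfrak p\in\Diff(\V,m)$ forces $\chi_\mathfrak p(\varpi_\mathfrak p)=-1$ (the prime is inert in $\kay$), Lemma \ref{lem:Whittakerfinite}(2) gives a finite alternating geometric sum in $-\norm(\mathfrak p)^{-s}$ whose derivative at $s=0$ is straightforward to evaluate to $\tfrac12(1+\ord_\mathfrak p(m\partial))\log\norm(\mathfrak p)$.

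Finally, for (4) I would use \eqref{eq4.21}, writing $B(v,s,0,\mu)=\chi_\mu(0)\,\norm(v)^{(s-s_0)/2}+M(s,\mu)\,\norm(v)^{-(s+s_0)/2}$ with $s_0=0$. Differentiating at $s=0$ and invoking $M(0,\mu)=-\chi_\mu(0)$ from the vanishing of $E_L(\tau,0,\kappa)$, the two logarithmic contributions add rather than cancel, producing exactly $\chi_\mu(0)\log\norm(v)+\beta(0,\mu)$ with $\beta(0,\mu):=M'(0,\mu)$ independent of $v$.

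The main obstacle is the bookkeeping in the first step—correctly tracking $[L':L]^{1/2}$, $D^{1/2}$, $\norm(D_{\kay/F})^{1/2}$, and the local factors of $A^{s/2}$ so that everything collapses into the completed $L$-function $\Lambda(s+1,\chi)$. Once that repackaging is in place, the odd parity of $|\Diff(\V,m)|$ makes all four cases fall out immediately from the product rule.
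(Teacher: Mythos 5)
Your strategy for the nonconstant coefficients is exactly the paper's: use the product formula $\prod_w\gamma(\V_w)=-1$ to rewrite $B(v,s,m,\mu)=-A^{s/2}\Lambda(s+1,\chi)^{-1}\prod_{w\le\infty}W^*_{m,w}(s,\mu)$, note that each $w\in\Diff(\V,m)$ kills the corresponding factor at $s=0$ (so $B$ vanishes there, and so does $B'$ when $|\Diff(\V,m)|>1$), and apply the Leibniz rule so that only the derivative at the unique bad place survives; the evaluations $W^*_{m,\sigma_j}(\tau_j,0,\kappa)=2$ for $m_j>0$, $W^{*,\prime}_{m,\sigma_j}(\tau_j,0,\kappa)=L_{\sigma_j}(1,\chi_{\sigma_j})\,W'_{\sigma_j}(\tau_j,0,m,\kappa)=\Gamma(0,-4\pi m_jv_j)$ for $m_j<0$, and the alternating geometric sum giving $\tfrac12(1+\ord_{\mathfrak p}(m\partial))\log\norm(\mathfrak p)$ at an inert unimodular prime, all match \eqref{eq: WhittakerValue} and the (1)--(3) of the statement on the nose.

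The one genuine gap is your treatment of the constant term, on which both the assertion $E_L(\tau,0,\kappa)=0$ and part (4) depend. You claim that ``combining the unramified local formulas \dots with $\Lambda(1,\chi)=\Lambda(0,\chi)$ gives $M(0,\mu)=-\chi_\mu(0)$.'' But Lemmas \ref{lem:Whittakerfinite} and \ref{lem:Whittakerinfty} give explicit values of $W_{\mathfrak p}(s,0,\mu)$ only when $L_{\mathfrak p}$ is unimodular and $\mu=0$; for $\mathfrak p\in S(L)$, and in particular for $\mu\neq 0$ (where one must show the bad local product vanishes at $s=0$, since $\chi_\mu(0)=0$), the cited formulas say nothing, and the functional equation of $\Lambda$ does not supply the missing local input. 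The paper closes this by a soft global argument: once all coefficients with $m\neq 0$ are known to vanish at $s=0$, the value $E(\tau,0,\mu,\kappa)$ equals its constant term, which is an honest constant; a nonzero constant cannot be a holomorphic modular form of weight $\kappa=(1,\dots,1)$, so the constant term vanishes as well. This simultaneously gives $E_L(\tau,0,\kappa)=0$ and forces $f(0)=-\chi_\mu(0)$ in the expansion $B(v,s,0,\mu)=\chi_\mu(0)\norm(v)^{s/2}+\norm(v)^{-s/2}f(s)$, from which (4) follows by differentiation exactly as you describe. Replacing your local computation by this modularity argument repairs the proof; the rest stands.
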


\begin{proof}
We have already shown $B(v,s, m, \mu) =0$ unless $m \in Q(\mu) +\partial^{-1}$. So we assume this condition throughout the proof.
Since $w \in \Diff(\V, m)$ implies $W_{m, w}(\tau, 0,
\mu,\kappa) =0$, one sees immediately that $B(\tau, 0,
m, \mu) =0$ for all $m \ne 0$.
Thus we have
$$
E(\tau, 0, \mu,\kappa)=B(\tau, 0, \mu, \kappa) =0$$ since it is
of weight $\kappa=(1, \cdots, 1)$.  Moreover, it implies
$B'(\tau, 0, m, \mu)=0$ if $|\Diff(\V, m)| >1$ for
$m\ne 0$. Since
$$
B(v, s, 0, \mu)= \chi_\mu(0) \norm(v)^{\frac{s}2} + \norm(v)^{-\frac{s}2} f(s)
$$
for some function $f(s)$ which is holomorphic at $s=0$, we see $f(0) =-\chi_\mu(0)$, and
$$
B'(v, 0, 0, \mu) = \chi_\mu(0) \log \norm(v) + f'(0).
$$
This proves (3) and (4).  To prove the other assertions, we notice for $m \ne 0$ that
\begin{align*}
B(v, s, m, \mu) &= \frac{|A|^{\frac{s}2}}{\Lambda(s+1, \chi)} \prod_{w \le \infty} \gamma(\V_w) \prod_{\mathfrak p} W_{m, \mathfrak p}^*(s, \mu)  \prod_{j} W_{m, \sigma_j}^*(\tau_j, s, \kappa)
\\
 &=-\frac{|A|^{\frac{s}2}}{\Lambda(s+1, \chi)}\prod_{\mathfrak p} W_{m, \mathfrak p}^*(s, \mu)  \prod_{j} W_{m, \sigma_j}^*(\tau_j, s, \kappa).
 \end{align*}
 Here we used (\ref{WeilIndex}).
Now (1) and (2) follow from (\ref{eq: WhittakerValue}) and (\ref{eq4.26}).
\end{proof}

\begin{corollary}
\label{cor4.10} \label{cor: n=0}
Let the notation and assumption be as in  Proposition \ref{prop4.9}. Write
$$
\mathcal E_L(\tau) =E_L'(\tau, s_0, \kappa).
$$
Then we have
$$
\mathcal E_L(\tau) =\sum_{j=0}^{d} \mathcal E_L^{(j)}(\tau).
$$
Here for  $ j >0$,
\begin{align*}
\calE_L^{(j)}(\tau)&:=\chi_\mu(0)  \log v_j -\frac{2^{d-1}}{\Lambda(1, \chi)} \sum_{\mu\in L'/L}\sum_{\substack{m\in Q(\mu) + \partial^{-1}\\m_j<0\\ \text{$m_i>0$ for $i\neq j$}}}
 W^{(\infty)}( m, \mu) \Gamma(0, - 4 \pi m_j v_j)  q^m \chi_\mu,
\end{align*}
is holomorphic in $\tau_i$ for all $i \ne j$ (but non-holomorphic in $\tau_j$),  and
\begin{align*}
\calE_L^{(0)}(\tau)
 &:=   \sum_{\mu \in L'/L} \beta_L(0, \mu) \chi_\mu    - \frac{2^d}{\Lambda(1, \chi)}\sum_{\substack{\mu \in L'/L, m\gg 0\\ m \in Q(\mu) + \partial^{-1}} } \beta_L^*(m, \mu) q^m \chi_\mu
\end{align*}
is holomorphic in $\tau$. Here
for  $ m \gg 0$ we have
$$
\beta_L^*(m, \mu) = \begin{cases} W^{(\mathfrak p \infty)}(m, \mu) W_{m,\mathfrak
p}^{*, \prime}(0, \mu) &\ff  \Diff(\V, m) =\{ \mathfrak p\},
\\
 0 &\hbox{otherwise.}
 \end{cases}
$$
In particular
$$
\beta_L^*(m, \mu) =\begin{cases}
   a_{\mathfrak p} \log \norm(\mathfrak p) &\ff \Diff(\V, m) =\{ \mathfrak p\},
   \\
    0 &\hbox{otherwise},
    \end{cases}
$$
for some constant $a_{\mathfrak p} \in \mathbb Q$. Furthermore    $a_{\mathfrak p} =0$
unless  $\mathfrak p$ is non-split in $\kay$ satisfying the additional
condition  that $\ord_{\mathfrak p} m\partial >0$ is odd   or $\mathfrak p \in S(L)$.
\end{corollary}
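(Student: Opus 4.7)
The plan is to obtain Corollary~\ref{cor: n=0} as a direct bookkeeping reorganization of Proposition~\ref{prop: n=0}, with the rationality claims supplied by Lemma~\ref{lem: integral}. I would begin by expanding
\[
\mathcal{E}_L(\tau) = \sum_{\mu \in L'/L}\sum_{m \in Q(\mu)+\partial^{-1}} B'(v, 0, m, \mu)\, q^m \chi_\mu
\]
using \eqref{eq: Eisenstein} and reading off the four cases of Proposition~\ref{prop: n=0}: the constant term $m = 0$ contributes $\chi_\mu(0)\log\norm(v) + \beta(0,\mu)$; for $m \neq 0$ the coefficient vanishes unless $|\Diff(\V,m)| = 1$, and in that case parts (1) and (2) give explicit expressions. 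Writing $\log\norm(v) = \sum_{j=1}^d \log v_j$, I would parcel out one copy of $\chi_0\log v_j$ and the $\sigma_j$-archimedian Diff contributions into $\calE_L^{(j)}$ for $j \geq 1$, and sweep all finite-prime Diff contributions together with the constants $\beta(0,\mu)$ into $\calE_L^{(0)}$; this reproduces the two displayed formulas verbatim, with the overall constants $-2^{d-1}/\Lambda(1,\chi)$ and $-2^d/\Lambda(1,\chi)$ coming directly from parts (2) and (1) of the proposition.

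Next, I would verify the holomorphicity claims. In $\calE_L^{(j)}$ for $j \geq 1$ the only non-holomorphic pieces are $\log v_j$ and $\Gamma(0, -4\pi m_j v_j)$, both depending solely on $v_j$, while the accompanying $q^m$ factors are holomorphic in every $\tau_i$; hence $\calE_L^{(j)}$ is holomorphic in $\tau_i$ for $i \neq j$. For $\calE_L^{(0)}$, the key observation is that $\Diff(\V,m) = \{\mathfrak{p}\}$ with $\mathfrak{p}$ finite forces $m_j > 0$ at every archimedean place, since each $\V_{\sigma_j}$ is positive definite and represents $m$ if and only if $m_j > 0$ (otherwise $\sigma_j$ would also lie in $\Diff(\V,m)$). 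Consequently the $q$-expansion of $\calE_L^{(0)}$ is supported in the totally positive cone, and $\calE_L^{(0)}$ is manifestly a holomorphic $q$-series on $\H^d$.

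Finally, comparing Proposition~\ref{prop: n=0}(1) with the definition of $\calE_L^{(0)}$ identifies
\[
\beta_L^*(m,\mu) = W^{(\mathfrak{p}\infty)}(m,\mu)\, W_{m,\mathfrak{p}}^{*,\prime}(0,\mu)
\]
when $\Diff(\V,m) = \{\mathfrak{p}\}$, and zero otherwise. Lemma~\ref{lem: integral}(1) then yields $W^{(\mathfrak{p}\infty)}(m,\mu) \in \mathbb{Q}$ and $W_{m,\mathfrak{p}}^{*,\prime}(0,\mu) \in \mathbb{Q}\log\norm(\mathfrak{p})$, giving $\beta_L^*(m,\mu) = a_\mathfrak{p}\log\norm(\mathfrak{p})$ with $a_\mathfrak{p} \in \mathbb{Q}$. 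For the vanishing criterion I would invoke the standard fact that $\mathfrak{p} \in \Diff(\V,m)$ forces $\mathfrak{p}$ to be non-split in $\kay$ (the local norm map from $\kay_\mathfrak{p}$ is surjective at split primes), and for $\mathfrak{p} \notin S(L)$ use the formula $\tfrac{W_{m,\mathfrak{p}}^{*,\prime}(0,\mu)}{\log\norm(\mathfrak{p})} = \tfrac{1}{2}(1+\ord_\mathfrak{p}(m\partial))$ from Proposition~\ref{prop: n=0}(1) together with the condition $W_{m,\mathfrak{p}}^{*}(0,\mu) = 0$ at unimodular inert primes, which enforces $\ord_\mathfrak{p}(m\partial) > 0$ odd; the remaining primes are absorbed into the alternative $\mathfrak{p} \in S(L)$.

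The only genuinely non-mechanical step is the positive-support observation for $\calE_L^{(0)}$; it is what cleanly separates the ``holomorphic part'' from the archimedian non-holomorphicity carried by each individual $\calE_L^{(j)}$, and thereby makes the decomposition into a well-defined sum of a holomorphic $q$-series and $d$ place-localized pieces. Everything else is a direct combination of Proposition~\ref{prop: n=0} and Lemma~\ref{lem: integral}.
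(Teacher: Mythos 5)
Your proposal is correct and is exactly the argument the paper intends: the corollary is stated without proof as an immediate regrouping of the Fourier coefficients computed in Proposition~\ref{prop: n=0}, with the rationality of $\beta_L^*(m,\mu)$ supplied by Lemma~\ref{lem: integral} and the vanishing criterion by the explicit unimodular formula. Your observation that a finite $\Diff(\V,m)=\{\mathfrak p\}$ forces $m\gg 0$ (so that $\calE_L^{(0)}$ is a genuine holomorphic $q$-series) is precisely the point the paper leaves implicit, and you have filled it in correctly.
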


\begin{remark}
\label{rem:E_L}
Notice that the incoherent Eisenstein series
$E_L(\tau, s, \kappa)$ depends only on $\LL \subset \hat{\V}$. For
this reason, we will sometimes write $E_\LL $ for $E_L$,  $\mathcal E_\LL$
for $\mathcal E_L$, and $\mathcal E_\LL^{(j)}$ for $\mathcal
E_L^{(j)}$.
\end{remark}

\begin{example}
\label{ex4.13} Let $\kay$ be a totally imaginary quadratic extension
of $F$ which is unramified at primes of $F$ above $2$, and let
$\chi=\chi_{\kay/F}$ be the quadratic Hecke character of $F$
associated to $\kay/F$. Suppose  that $\xi_\A =\hat{\xi} \xi_\infty
\in \A_F^\times=\hat{F}^\times F_{\infty}^\times$ satisfies
\begin{equation} \label{eq: beta}
\hat{\xi} \hat{\OO}_F = \hat{\partial}^{-1}, \quad
\xi_j:=\xi_{\sigma_j} >0, \quad  \chi(\xi_\A) =-1.
\end{equation}
Then $(\V, Q) = (\kay_\A,   \xi_\A z \bar z)$ is an incoherent
quadratic space over $\A_F$, which is positive definite at all
infinite places, and  $\chi_{\V} =\chi$.  It is easy to check that
$\mathfrak p \in  \Diff(\V, m)$ if and only if $\chi_\mathfrak
p(\xi_\mathfrak p m) =-1$.

Let $\LL =\hat{\OO}_k$, then
$$
\LL'/\LL \cong \partial_{\kay/F}^{-1}/\OO_\kay.
$$
Let
 $$E_\LL(\tau, s, \kappa) =\sum_{\mu \in  \partial_{\kay/F}^{-1}/\OO_\kay} E(\tau, s, \mu, \kappa)\chi_\mu
 $$
 be the incoherent Eisenstein series  in Proposition \ref{prop: n=0}, and write
 $$
 \mathcal E_\LL^{(0)}(\tau)= \sum_{\mu \in  \partial_{\kay/F}^{-1}/\OO_\kay} \beta_\LL(0, \mu) \chi_\mu - \frac{2^d}{\Lambda(1, \chi)}\sum_{\substack{\mu \in  \partial_{\kay/F}^{-1}/\OO_\kay, m \gg 0 \\ m \in Q(\mu) + \partial^{-1}}} \beta_\LL^*(m, \mu) q^m \chi_\mu
 $$
as in Corollary \ref{cor: n=0}.
Then we have:

\begin{proposition}
\label{prop:ex}
Let the notation and assumption be as
above. Let $o(\mu)$ be the number of prime ideals $\mathfrak l$
of $F$ which ramify in  $\kay$ and for which $\mu_{\mathfrak l} \in
\OO_{\kay, \mathfrak l}$.
Moreover, for an ideal $\mathfrak a$ of $F$, we let
$$
\rho (\mathfrak a) = |\{ \mathfrak A \subset \OO_\kay;\;   \norm_{\kay/F}(\mathfrak A) =\mathfrak a\}|.
$$
Assume that $ m \in F$ is totally positive.
If $|\Diff(\V, m) |
>1$ or $m \notin Q(\mu) +
\partial^{-1}$, we have  $\beta_\LL^*(m, \mu) =0$.

If $\Diff(\V, m) =\{ \mathfrak  p \}$ and $m \in
Q(\mu) +
\partial^{-1}$, then $\mathfrak p$ is
non-split in $\kay$, and $\beta_\LL^*(m, \mu) \in \mathbb Z \log
\norm(\mathfrak p)$ is given by
$$
\beta_\LL^*(m, \mu)=2^{o(\mu) -1} (1+ \ord_\mathfrak p(m
\partial)) \times
\begin{cases} \rho(m \partial D_{\kay/F} \mathfrak p^{-1}) \log
\norm(\mathfrak p), & \text{if $\mathfrak p$ is inert in $\kay$,}\\
\rho(m \partial D_{\kay/F} ) \log \norm(\mathfrak p),
& \text{if $\mathfrak p$ is ramified in $\kay$.}
\end{cases}
$$
\end{proposition}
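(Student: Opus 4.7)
\emph{Proof proposal.} The plan is to reduce the statement to the local computation of the Whittaker functions at the bad prime $\mathfrak p$ and the evaluation of $W^{(\mathfrak p\infty)}(m,\mu)$ at all other finite primes, and then to assemble these pieces. First, by Corollary~\ref{cor: n=0} we already have $\beta_\LL^*(m,\mu)=0$ whenever $|\Diff(\V,m)|>1$, and $W_{m,\mathfrak q}^*(0,\mu)$ vanishes identically in $s$ unless $m\in Q(\mu)+\partial_\mathfrak q^{-1}$, so the first half of the proposition is immediate. For the second half we assume $\Diff(\V,m)=\{\mathfrak p\}$, so that
\[
\beta_\LL^*(m,\mu)\;=\;W^{(\mathfrak p\infty)}(m,\mu)\cdot W_{m,\mathfrak p}^{*,\prime}(0,\mu),
\]
and the task is to evaluate each factor explicitly using the specific lattice $\LL_\mathfrak q=\OO_{\kay,\mathfrak q}$ with quadratic form $\xi_\mathfrak q z\bar z$ and $\ord_\mathfrak q\xi_\mathfrak q=-\ord_\mathfrak q\partial$.

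For the product $W^{(\mathfrak p\infty)}(m,\mu)$, I would compute $W_{m,\mathfrak q}^*(0,\mu)$ separately for each type of finite prime $\mathfrak q\ne\mathfrak p$ in $S(m)$. At primes $\mathfrak q$ split in $\kay$, $V_\mathfrak q$ is a hyperbolic plane so $L_\mathfrak q$ is unimodular and Lemma~\ref{lem:Whittakerfinite}(2) gives $W_{m,\mathfrak q}^*(0,\mu)=1+\ord_\mathfrak q(m\partial)$ (a finite geometric sum in $\chi_\mathfrak q(\varpi_\mathfrak q)=1$), which equals the local factor of $\rho$ at $\mathfrak q$. At inert primes $\mathfrak q\ne\mathfrak p$, the absence of $\mathfrak q$ from $\Diff$ forces $\ord_\mathfrak q(m\partial)$ to be even and Lemma~\ref{lem:Whittakerfinite}(2) again gives exactly the local $\rho$-factor. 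At ramified primes $\mathfrak q$, one uses Lemma~\ref{lem: integral} together with the description $L_\mathfrak q=(\OO_{\kay,\mathfrak q},\xi_\mathfrak q x\bar x)$ (noting that $\kay/F$ is unramified above $2$ by hypothesis, so the exceptional case of Lemma~\ref{lem: integral} does not occur); the factor $2^{o(\mu)-1}$ in the final formula is produced here, one factor of $2$ per ramified prime with $\mu_\mathfrak l\in\OO_{\kay,\mathfrak l}$, reflecting the two square roots in the local density of $x\bar x$.

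For the local factor at $\mathfrak p$ itself, I split into the two cases of the proposition. If $\mathfrak p$ is inert in $\kay$, then $\ord_\mathfrak p(m\partial)$ must be odd (since $\chi_\mathfrak p(\xi_\mathfrak p m)=-1$) and the explicit polynomial formula of Lemma~\ref{lem:Whittakerfinite}(2) combined with Lemma~\ref{lem: integral}(3) yields
\[
\frac{W_{m,\mathfrak p}^{*,\prime}(0,\mu)}{\log\norm(\mathfrak p)}\;=\;\tfrac{1}{2}\bigl(1+\ord_\mathfrak p(m\partial)\bigr)\rho_\mathfrak p(m\partial D_{\kay/F}\mathfrak p^{-1}),
\]
which is precisely the inert contribution once combined with $W^{(\mathfrak p\infty)}$. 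If $\mathfrak p$ is ramified in $\kay$, then $\mathfrak p\mid D_{\kay/F}$ and the local density of the form $\xi_\mathfrak p x\bar x$ on $\OO_{\kay,\mathfrak p}$ (again via Lemma~\ref{lem: integral}(3), valid because $\mathfrak p\nmid 2$) produces the factor $(1+\ord_\mathfrak p(m\partial))\rho_\mathfrak p(m\partial D_{\kay/F})$, together with a factor $2$ coming from the ramified contribution to $o(\mu)$.

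The main obstacle is the explicit evaluation of the derivative $W_{m,\mathfrak p}^{*,\prime}(0,\mu)$ in the ramified case, since Lemma~\ref{lem:Whittakerfinite} does not cover non-unimodular lattices and one must fall back on the normalized formulas of Howard--Yang recorded in Lemma~\ref{lem: integral}, together with a careful bookkeeping of the $\xi_\mathfrak q$-twists via~\eqref{eq: Whittaker-psi}; this is where the hypothesis that $\kay/F$ be unramified above $2$ is actually used, and where the integrality assertion $\beta_\LL^*(m,\mu)\in\mathbb Z\log\norm(\mathfrak p)$ comes from.
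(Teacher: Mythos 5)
Your overall strategy is the same as the paper's: reduce via Proposition~\ref{prop: n=0} to the product $\beta_\LL^*(m,\mu)=W^{(\mathfrak p\infty)}(m,\mu)\cdot W_{m,\mathfrak p}^{*,\prime}(0,\mu)$ and evaluate prime by prime, with the unramified primes handled by the geometric-sum formula (your use of Lemma~\ref{lem:Whittakerfinite}(2) there is fine and matches what \cite[Proposition 1.1]{YaValue} gives). The gap is at the ramified primes. You repeatedly lean on Lemma~\ref{lem: integral}, but that lemma only asserts integrality and rationality of $W_{m,\mathfrak p}^*(0,\chi_\mu)$ and its derivative; it does not supply any explicit values. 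To get the value $2$ at a ramified $\mathfrak l\ne\mathfrak p$ with $\mu_\mathfrak l\in\OO_{\kay,\mathfrak l}$, the value $1$ when $\mu_\mathfrak l\notin\OO_{\kay,\mathfrak l}$ (a case you do not treat at all, though it is needed to see that only the $o(\mu)$ primes contribute factors of $2$), and the derivative $(f_\mathfrak p+\ord_\mathfrak p(m\partial))\log\norm(\mathfrak p)$ at a ramified $\mathfrak p\in\Diff(\V,m)$, one must first pass to the unramified character $\tilde\psi_\mathfrak p(x)=\psi_\mathfrak p(\delta^{-1}x)$ via \eqref{eq: Whittaker-psi} and then quote the closed formulas $W^*_{m,\mathfrak p}(s,\chi_\mu)=1+\chi_\mathfrak p(\xi_\mathfrak p m)\norm(\mathfrak p)^{-s(f_\mathfrak p+N)}$ (ramified, $\mu_\mathfrak p\in\OO_{\kay,\mathfrak p}$) and $W^*_{m,\mathfrak p}(s,\chi_\mu)=1$ (ramified, $\mu_\mathfrak p\notin\OO_{\kay,\mathfrak p}$, $\mathfrak p\nmid 2$) from \cite[Propositions 1.1, 1.3]{YaValue} and \cite[Propositions 6.2.2, 6.2.4, 6.2.6]{HY}. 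Without these your "two square roots" heuristic is not a proof.

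There is also a bookkeeping problem in your ramified-at-$\mathfrak p$ case: you attach ``a factor $2$ coming from the ramified contribution to $o(\mu)$'' on top of $(1+\ord_\mathfrak p(m\partial))$. The correct accounting is that $\mathfrak p$ itself is one of the $o(\mu)$ primes but contributes the derivative $(f_\mathfrak p+\ord_\mathfrak p(m\partial))\log\norm(\mathfrak p)$ \emph{instead of} a factor $2$, so only the remaining $o(\mu)-1$ ramified primes produce $2$'s; in the inert case all $o(\mu)$ of them produce $2$'s but the derivative at $\mathfrak p$ carries an extra $\tfrac12$. Both cases land on $2^{o(\mu)-1}$, but by different mechanisms, and as written your count appears to yield $2^{o(\mu)}$ in the ramified case.
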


\begin{proof}
This is a  generalization of  \cite[Theorem 4.1]{Scho} and \cite[Theorem 2.6]{BY} and follows from Proposition  \ref{prop: n=0} and local results in \cite{YaValue} and \cite[Section 6]{HY}. Indeed, using the notation in the proof of Proposition  \ref{lem: integral}, we have
$$
W_{m, \mathfrak p}^*(s, \chi_\mu) = W_{\delta m, \mathfrak p}^{*, \tilde{\psi}_\mathfrak p}(s, \chi_\mu)
$$
by (\ref{eq: Whittaker-psi}), which is zero unless $m\delta  \in D_{\kay/F, \mathfrak p}^{-1}$. Here $\delta$     is an $\OO_{\mathfrak p}$-generator of  $ \partial_{\mathfrak p}$, and  $\tilde\psi_\mathfrak p(x) =\psi_\mathfrak p(\delta x)$ is unramified. Moreover, $(\tilde{\LL}_\mathfrak p, \tilde Q) = (\OO_{\kay, \mathfrak p}, \delta \xi_\mathfrak p z \bar z)$ with
$ \delta \xi_\mathfrak p \in \OO_\mathfrak p^\times$.  Denote $N=\ord_\mathfrak p (m \partial) = \ord_\mathfrak p (m \delta)$,  let $\pi_\mathfrak p$ be a uniformizer of $F_\mathfrak p$, and assume $m \in  (\partial_\mathfrak p D_{\kay/F, \mathfrak p})^{-1}$.

When $\mathfrak p$ is unramified in $\kay$, \cite[Proposition 1.1]{YaValue}  (also \cite[Proposition 6.2.2]{HY}) gives
$$
W_{m, \mathfrak p}^*(s, \chi_\mu) =\sum_{n=0}^N (\chi_\mathfrak p (\pi_\mathfrak p) \norm(\mathfrak p)^{-s})^n.
$$
 Consequently,
 $$
 W_{m, \mathfrak p}^*(0, \chi_\mu) = \sum_{n=0}^N (\chi_\mathfrak p (\pi_\mathfrak p))^n =\rho_\mathfrak p(m \partial )
 $$
 is the number of ways to write $m \partial$ as the norm of  integral ideals of $\kay_\mathfrak p$ (to $F_\mathfrak p$). In particular,
 it vanishes if and only if $\mathfrak p \in \Diff(\V, m )$, that is,  $\chi_\mathfrak p(\xi_\mathfrak p m) =\chi_\mathfrak p (\delta  m) =-1$. In such a case, one has
 $$
 W_{m, \mathfrak p}^{*, \prime}(0, \chi_\mu)= \frac{1}2 (1 + \ord_\mathfrak p (m\partial)) \log \norm(\mathfrak p).
 $$

When $\mathfrak p$ is ramified in $\kay$, and $\mu_\mathfrak p \in \LL_\mathfrak p=\OO_{\kay, \mathfrak p}$, one has by \cite[Proposition 1.3]{YaValue}  (see also \cite[Proposition 6.2.4]{HY}) that
$$
W_{m, \mathfrak p}^*(s, \chi_\mu) = 1 + \chi_{\mathfrak p}(\xi_\mathfrak p m) \norm(\mathfrak p)^{-s(f_\mathfrak p +N)},
$$
where $f =\ord_{\mathfrak p} D_{\kay/F}$. So
$$
W_{m, \mathfrak p}^*(0, \chi_\mu) = \begin{cases}
 0 &\ff \mathfrak p \in \Diff(\V, m),
 \\
 2 &\ff \mathfrak p \notin \Diff(\V, m).
 \end{cases}
$$
When $\mathfrak p \in \Diff(\V, m)$, one has
$$
W_{\mathfrak p}^{*, \prime}(0, \chi_\mu) = (f + \ord_\mathfrak p m \partial) \log \norm(\mathfrak p).
$$
Finally, when $\mathfrak p \nmid 2 $ is ramified in $\kay$, and
$\mu_\mathfrak p \notin \LL_\mathfrak p=\OO_{\kay, \mathfrak p}$, one
has by \cite[Proposition 6.2.6]{HY} that
$$
W_{m, \mathfrak p}^*(s, \chi_\mu) = \hbox{Char}( Q(\mu) + \partial_\mathfrak p^{-1})(m) =1.
$$

Now our formula follows pretty easily.  Indeed, if $|\Diff(\V, m)|
>1$ or $m \notin Q(\mu) + \partial^{-1}$, then $\beta_\LL^*(m, \mu) =0$.
So we may assume that $\Diff(\V, m) =\{ \mathfrak  p\}$ and $m \in
Q(\mu) + \partial^{-1}$. This implies in particular that  $\mu \in
\OO_{\kay, \mathfrak p}$ and $\mathfrak p$ is non-split in $\kay$.

When $\mathfrak p$ is inert in $\kay$, the above formulas and
Proposition \ref{prop: n=0} imply that
\begin{align*}
\beta_\LL^*(m, \mu) &=2^{o(\mu) -1} (1 + \ord_{\mathfrak p}(m
\partial)) \prod_{\mathfrak l \nmid  \mathfrak p D_{\kay/F}}
\rho_\mathfrak l(m \partial) \log \norm(\mathfrak p)
\\
 &= 2^{o(\mu) -1} (1 + \ord_{\mathfrak p}(m \partial))  \rho(m \partial \mathfrak p^{-1}) \log \norm(\mathfrak p)
\end{align*}
as claimed. Here we used the fact that for an integral ideal
$\mathfrak a$ of $F$
\begin{align*}
\rho(\mathfrak a) &=\prod_{\mathfrak l} \rho_\mathfrak l(\mathfrak a),
\end{align*}
and
$$
   \rho_\mathfrak l(\mathfrak a) = \begin{cases}
    1 &\ff \mathfrak l  \hbox{ is ramified in } \kay,
    \\
    \frac{1+ (-1)^{\ord_\mathfrak l \mathfrak a}}2 &\ff \mathfrak l  \hbox{ is inert in } \kay,
    \\
    1 +\ord_{\mathfrak l} \mathfrak a &\ff \mathfrak l  \hbox{ is split in } \kay.
    \end{cases}
$$

When $\mathfrak p$ is ramified in $\kay$, the above formulas and
Proposition \ref{prop: n=0} imply that
\begin{align*}
\beta_\LL^*(m, \mu) &=2^{o(\mu)-1 } (f_\mathfrak p + \ord_{\mathfrak
p}(m \partial)) \prod_{\mathfrak l \ne \mathfrak p} \rho_\mathfrak
l(m \partial) \log \norm(\mathfrak p)
\\
 &=2^{o(\mu)-1 } (1 + \ord_\mathfrak p ( m \partial) ) \rho(m \partial) \log \norm(\mathfrak p)
\end{align*}
as claimed, since $f_\mathfrak p=1$ under our assumption that $\kay/F$ is unramified at primes above $2$. This proves the proposition.
\end{proof}
\end{example}

\section{Automorphic Green functions}

\label{sect:green}
\label{sect:5}

Here we briefly recall from \cite{Br2} the construction of automorphic Green functions for special divisors on $X_K$ as regularized theta lifts of Whittaker forms.
For background on Arakelov theory we refer to \cite{SABK}, \cite{BKK}.

\subsection{Regularized theta lifts of Whittaker forms}

Let $L\subset V$ be an even $\calO_F$-lattice.
Recall from Section \ref{sect:4.3} that there is a corresponding Siegel theta function $\Theta_L(\tau,z,h)$
for $\tau\in \H^d$, $z\in \D$, and $h\in H(\hat\Q)$. It has weight $\tilde\kappa$.

For any $\mu\in L'/L$ and any totally positive $m\in  \partial^{-1}+Q(\mu)$, let $f_{m,\mu}(\tau,s)$ be the Whittaker form of weight of weight
\[
k=(\frac{2-n}{2}, \frac{2+n}{2},\dots,\frac{2+n}{2})
\]
with parameter $s$ defined in
\eqref{eq:deffms}.
The automorphic Green function $\Phi_{m,\mu}(z,h,s)$ for the divisor $Z(m,\mu)$
is defined as the regularized theta lift of  $f_{m,\mu}(\tau,s)$,
\begin{align*}
\Phi_{m,\mu}(z,h,s)&=
\frac{1}{\sqrt{D}}\int_{\tilde \Gamma_\infty\bs \H^d}^{reg}\langle f_{m,\mu}(\tau,s), \Theta_L(\tau, z,h)\rangle (v_2\cdots v_d)^{\ell/2}\,d\mu(\tau)\\
&=\frac{1}{\sqrt{D}}\int_{v\in (\R_{>0})^d}\left(\int_{u\in \calO_F\bs \R^d}
\langle f_{m,\mu}(\tau,s), \Theta_L(\tau, z,h)\rangle \,du\right) (v_2\cdots v_d)^{\ell/2}
\,\frac{dv}{\norm(v)^{2}}.
\end{align*}
The regularized integral converges if $\Re(s)>s_0+2$, where $s_0=n/2$.
It is proved in \cite{Br2} that $\Phi_{m,\mu}(z,h,s)$ has a meromorphic continuation to the whole $s$ plane with a simple pole at $s=s_0$. It satisfies a functional equation relating the values at $s$ and $-s$.

Now let $f\in  H_{k,\bar\rho_L}$ be a {\em harmonic} Whittaker form and write
\begin{align}
\label{eq:fexp}
f=\sum_{\mu\in L'/L} \sum_{m\gg 0} c(m,\mu)f_{m,\mu}(\tau).
\end{align}
The regularized theta lift $\Phi(z,h,f)$ of $f$ is defined as the constant term in the Laurent expansion at $s=s_0$ of
\[
\Phi(z,h,s,f):=\sum_{\mu\in L'/L} \sum_{m\gg 0} c(m,\mu)\Phi_{m,\mu}(z,h,s).
\]
It has a logarithmic singularity along the divisor $-2Z(f)$, where
\begin{align}
\label{eq:divf}
Z(f)=\sum_{\mu\in L'/L} \sum_{m\gg 0} c(m,\mu)Z(m,\mu).
\end{align}
In view of \cite[Corollary 5.16]{Br2}, the function $\Phi(z,h,f)$ is an Arakelov Green function for the divisor $Z(f)$.
Recall that the degree of $Z(m,\mu)$ is given by
\begin{align}
\label{eq:divf2}
B_L(m,\mu)= -\frac{\deg(Z(m,\mu))}{\vol(X_K)},
\end{align}
where $B_L(m,\mu)$ denotes the $(m,\mu)$-th coefficient
of the Eisenstein series $E_L(\tau,s_0,\kappa)$ as in Section \ref{sect:4.6} (see e.g. \cite[Remark 6.5]{Br2}).
If we put
\begin{align}
\label{eq:Bf}
B(f)&=\sum_{\mu\in L'/L} \sum_{m\gg 0} c(m,\mu)B_L(m,\mu), 
\end{align}
then by \cite[Corollary 5.9]{Br2}
the residue of $\Phi(z,h,s,f)$ at $s_0$ is equal to $-2B(f)$.  Consequently,
we have
\begin{align}
\Phi(z,h,f)= \lim_{s\to s_0} \left(\Phi(z,h,s,f)+\frac{ 2B(f)}{s-s_0} \right).
\end{align}

\subsection{A variant of the theta integral}

We now give a different regularized integral representation for $\Phi_{m,\mu}(z,h,s)$ which converges at $s_0$. It also leads to a new integral representation for the Green function $\Phi(z,h,f)$.
The idea is the same as in \cite[Section 6.2]{Br2}: We subtract the ``Eisenstein contribution'' of the Siegel theta function $\Theta_L$.
The remaining ``cuspidal contribution'' satisfies a better growth estimate as $v_i\to 0$ and therefore leads to a larger domain of convergence.
As before we
assume that $V$ is anisotropic over $F$ or that its Witt rank
is smaller than $n$.

We define the cuspidal part of the Siegel theta function by
\begin{align}
\label{eq:siecu}
\tilde \Theta_{L}(\tau,z,h)=\Theta_{L}(\tau,z,h)-E_L(\tau,s_0,\tilde\kappa).
\end{align}
It follows from Lemma \ref{Siegel-Weil} that the constant terms at all cusps of $\tilde\Gamma$ of this function vanish or are rapidly decreasing. Consequently,
$\tilde \Theta_{L}(\tau,z,h)$ is rapidly decreasing.

\begin{proposition}
\label{prop:thetatilde}
Assume the above hypothesis on  $V$.
\begin{enumerate}
\item[(i)]
The function  $\tilde \Theta_{L}(\tau,z,h)v^{\tilde\kappa/2}$ is  bounded  on $\H^d$.
\item[(ii)]
For $v_i\to 0$ we have uniformly in $u$ that $\tilde \Theta_{L}(\tau,z,h)=O(v^{-\tilde\kappa/2})$.
\item[(iii)]
The $m$-th Fourier coefficient of $\tilde \Theta_{L}(\tau,z,h)$ is bounded by
$O(v_1^{\frac{2-n}{4}})$ as $v_i\to 0$.
\end{enumerate}
\end{proposition}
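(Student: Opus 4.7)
The approach rests on three ingredients: the modular invariance of $\tilde\Theta_L(\tau,z,h)$ under $\tilde\Gamma$, the rapid decay at cusps that was established in the paragraph preceding the proposition (via the Siegel-Weil formula, Lemma \ref{Siegel-Weil}), and the holomorphic dependence of $\Theta_L$ and $E_L(\tau,s_0,\tilde\kappa)$ on the variables $\tau_2,\ldots,\tau_d$ coming from the positive-definiteness of $V$ at $\sigma_2,\ldots,\sigma_d$.

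For (i), since $\tilde\Theta_L$ transforms with weight $\tilde\kappa$ and unitary representation $\rho_L$ under $\tilde\Gamma$, the scalar function $|\tilde\Theta_L(\tau)|\,v^{\tilde\kappa/2}$ is $\tilde\Gamma$-invariant. It thus suffices to bound it on a fundamental domain $\calF\subset \H^d$ for $\tilde\Gamma$. On compact subsets of $\calF$, continuity of $\tilde\Theta_L$ gives boundedness, while in any cuspidal neighborhood the exponential decay of $\tilde\Theta_L$ (guaranteed by the vanishing of its zeroth Fourier coefficient at that cusp) dominates the polynomial factor $v^{\tilde\kappa/2}$. Statement (ii) is then an immediate reformulation of (i): the global bound $|\tilde\Theta_L(\tau)|\le C\,v^{-\tilde\kappa/2}$ in particular holds as $v_i\to 0$, uniformly in $u$.

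For (iii), the additional ingredient is holomorphicity in the positive-definite directions. Inspection of formula (\ref{theta2}) shows that at the positive-definite place $\sigma_j$ ($j>1$) one has $Q(x_{z^\perp})_j=Q(x_j)$ and $Q(x_z)_j=0$, so that $\Theta_L(\tau,z,h)$ depends on $\tau_j$ but not on $\bar\tau_j$. By Siegel-Weil the same is true of $E_L(\tau,s_0,\tilde\kappa)$, hence of the difference $\tilde\Theta_L$. Expanding $\tilde\Theta_L$ in a Fourier series in $u$ and imposing the Cauchy-Riemann equations in $\tau_j$ termwise, one finds that the $m$-th Fourier coefficient factors as
\[
c_m(v) \;=\; a_m(v_1)\prod_{j>1} e^{-2\pi m_j v_j},
\]
with $c_m\equiv 0$ unless $m_j\ge 0$ for every $j>1$. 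Specializing the bound from (i) at $v_j=1$ for $j>1$ yields $|a_m(v_1)|=O(v_1^{-\tilde\kappa_1/2})=O(v_1^{(2-n)/4})$; since $e^{-2\pi m_j v_j}\le 1$ whenever $m_j\ge 0$, this estimate persists for all $v_j>0$ and proves the claim.

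The principal obstacle is the rapid decay of $\tilde\Theta_L$ at cusps, which rests on Siegel-Weil together with the assumption that $V$ is anisotropic or has Witt rank less than $n$ (so that $E_L(\tau,s,\tilde\kappa)$ is holomorphic at $s_0$ and coincides with the $X_K$-average of $\Theta_L$). Once this is granted, the three statements of the proposition follow from the modular invariance in (i)-(ii) and the Cauchy-Riemann factorisation in (iii).
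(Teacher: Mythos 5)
Your proof is correct and follows essentially the same route as the paper's: (i) and (ii) come from the rapid decay of $\tilde\Theta_L$ at all cusps of $\tilde\Gamma$ via the standard fundamental-domain argument, and (iii) combines the bound from (ii) in the $v_1$-direction with holomorphicity of $\tilde\Theta_L$ in $\tau_2,\dots,\tau_d$. Your write-up merely makes explicit the factorization of the Fourier coefficients that the paper leaves implicit; the only cosmetic slip is that the Cauchy--Riemann equations alone give the factor $e^{-2\pi m_j v_j}$ but not the vanishing for $m_j<0$ (that comes from the explicit expansion \eqref{theta2} and Siegel--Weil, and in any case is not needed for the limit $v_j\to 0$).
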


\begin{proof}
Since  $\tilde \Theta_{L}(\tau,z,h)$ is rapidly decreasing at all cusps of $\tilde\Gamma$, (i) and (ii) follow by the usual argument. It remains to prove (iii). The behavior of the Fourier coefficients as $v_1\to 0$ is a direct consequence of (ii). Moreover, since $\tilde \Theta_{L}(\tau,z,h)$ is holomorphic in $\tau_2,\dots,\tau_d$ its Fourier coefficients are bounded as $v_i\to 0$ for $i=2,\dots,d$.
\end{proof}

\begin{proposition}
We have that
\begin{align*}
\Phi_{m,\mu}(z,h,s)&=
\frac{1}{\sqrt{D}}\int_{\tilde \Gamma_\infty\bs \H^d}^{reg}\langle f_{m,\mu}(\tau,s), \tilde \Theta_L(\tau, z,h)\rangle (v_2\cdots v_d)^{\ell/2}\,d\mu(\tau)\\
&\phantom{=}{}+\frac{2n}{(s^2-s_0^2)\Gamma(\frac{s-s_0}{2}+1)}\frac{\deg(Z(m,\mu))}{\vol(X_K)}.
\end{align*}
Here the regularized theta integral converges for $\Re(s)>1$.
\end{proposition}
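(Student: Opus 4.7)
The strategy is to substitute the splitting $\Theta_L = \tilde\Theta_L + E_L(\tau,s_0,\tilde\kappa)$ from \eqref{eq:siecu} into the defining regularized integral for $\Phi_{m,\mu}(z,h,s)$ and evaluate the two resulting pieces separately. The integral against $\tilde\Theta_L$ will be the claimed cuspidal expression (convergent for $\Re(s)>1$), while the Eisenstein piece must yield the explicit correction term.

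Convergence of the cuspidal integral for $\Re(s)>1$ is proved by combining the estimates in Proposition~\ref{prop:thetatilde} with the asymptotics of $\calM_s$ from \eqref{Masy1} and \eqref{Masy2}. Near $v_1\to\infty$, $\tilde\Theta_L$ is of uniform rapid decay and easily dominates the factor $\calM_s(-4\pi m_1 v_1)\, e(-m_1\bar\tau_1)$, which grows like a constant times $e^{2\pi m_1 v_1}$. Near $v_1\to 0$, the Fourier coefficient bound in Proposition~\ref{prop:thetatilde}(iii), combined with the small-argument behavior $\calM_s(-4\pi m_1 v_1) = O(v_1^{s/2+n/4})$ and the measure factor $dv_1/v_1^2$, produces an integrand of order $v_1^{(s-3)/2}$, integrable precisely when $\Re(s)>1$. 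The $u$-integration is unproblematic by translation invariance, and the $\tau_j$-integrals for $j\ge 2$ cause no trouble since $\tilde\Theta_L$ is bounded and holomorphic in those variables.

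The Eisenstein contribution
$$I(s) := \frac{1}{\sqrt D}\int_{\tilde\Gamma_\infty\bs\H^d}^{reg}\langle f_{m,\mu}(\tau,s),\, E_L(\tau,s_0,\tilde\kappa)\rangle (v_2\cdots v_d)^{\ell/2}\,d\mu(\tau)$$
is evaluated by Fourier expansion. Since $f_{m,\mu}(\tau,s)$ is supported on the $\chi_\mu$ component and has $u$-phase $e(-\tr(mu))$, the $u$-integration over $\calO_F\bs\R^d$ isolates the $(m,\mu)$-Fourier coefficient of $E_L(\tau,s_0,\tilde\kappa)$, and $I(s)$ reduces to a pure $v$-integral. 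That Fourier coefficient factors into local archimedean Whittaker functions at all $d$ places; its geometric content --- namely the factor $\deg(Z(m,\mu))/\vol(X_K)$ --- is extracted via the Siegel--Weil formula (Lemma~\ref{Siegel-Weil}) together with \eqref{eq:divf2} and the relation \eqref{eq:eisrel2} linking the coherent and incoherent Eisenstein series. The archimedean Whittaker factors for $j\ge 2$ are given explicitly in Lemma~\ref{lem:Whittakerinfty}, reducing those $v_j$-integrals to beta-type evaluations.

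The technical heart of the computation is the remaining one-dimensional integral in $v_1$, essentially a Mellin transform of the $M$-Whittaker function $M_{-k_1/2,s/2}$ against an exponential. Using the integral representation \eqref{eq:m1} together with the series \eqref{kummer1}, this integral evaluates to a quotient of gamma functions exhibiting the pole $1/(s^2-s_0^2)$. The principal obstacle is the bookkeeping required to assemble the normalization $C(m,k,s)$ of \eqref{eq:defcms}, the Siegel--Weil constant, and the gamma factors coming from all local archimedean Whittaker integrals into exactly the prefactor $2n/\bigl((s^2-s_0^2)\Gamma(\tfrac{s-s_0}{2}+1)\bigr)$. As a consistency check, the simple pole at $s=s_0$ has residue $2\deg(Z(m,\mu))/\vol(X_K)$, matching the residue of $\Phi_{m,\mu}(z,h,s)$ at $s_0$ recorded in \cite[Corollary~5.9]{Br2}.
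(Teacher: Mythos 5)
Your decomposition $\Theta_L=\tilde\Theta_L+E_L(\tau,s_0,\tilde\kappa)$ is exactly the paper's starting point, and your convergence analysis of the cuspidal piece is correct and in fact more explicit than what the paper records: combining the bound $O(v_1^{(2-n)/4})$ of Proposition~\ref{prop:thetatilde}(iii) with $\calM_s(-4\pi m_1v_1)=O(v_1^{s/2+n/4})$ and the measure $dv_1/v_1^2$ gives an integrand of order $v_1^{(s-3)/2}$ near $v_1=0$, integrable precisely for $\Re(s)>1$; your residue consistency check against \cite{Br2} is also right. Where you diverge is the Eisenstein term, and there your route is left incomplete at the decisive point. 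The paper never Fourier-expands $E_L(\tau,s_0,\tilde\kappa)$: it applies the Siegel--Weil formula (Lemma~\ref{Siegel-Weil}) in the opposite direction, replacing $E_L(\tau,s_0,\tilde\kappa)$ by $\frac{1}{\vol(X_K)}\int_{X_K}\Theta_L(\tau,z,h)\,\Omega^n$ inside the integral, so that the Eisenstein contribution becomes $\frac{1}{\vol(X_K)}\int_{X_K}\Phi_{m,\mu}(z,h,s)\,\Omega^n$, the average of the Green function over $X_K$, which is evaluated in closed form by Theorem~5.7 of \cite{Br2} applied with the test function $1$; the constant $2n/\bigl((s^2-s_0^2)\Gamma(\tfrac{s-s_0}{2}+1)\bigr)$ comes out of that citation directly. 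Your plan --- isolate the $(m,\mu)$-Fourier coefficient of the coherent Eisenstein series and compute a Mellin transform of the $M$-Whittaker function --- amounts to reproving that theorem. It is feasible, but the entire content of the proposition is the exact prefactor, and you defer precisely this to unperformed ``bookkeeping''; moreover the geometric input is not as immediate as you suggest, since \eqref{eq:divf2} concerns the incoherent series $E_L(\tau,s_0,\kappa)$ and the passage to the $(m,\mu)$-coefficient of the coherent series of weight $\tilde\kappa$ goes through the derivative via \eqref{eq:eisrel2}. Either carry out the Whittaker integral in full, or (more efficiently) recognize the Eisenstein piece as the $X_K$-average of $\Phi_{m,\mu}$ and quote the known formula for that average.
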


\begin{proof}
By definition we have
\begin{align*}
\Phi_{m,\mu}(z,h,s)&=
\frac{1}{\sqrt{D}}\int_{\tilde \Gamma_\infty\bs \H^d}^{reg}\langle f_{m,\mu}(\tau,s), \tilde \Theta_L(\tau, z,h)\rangle (v_2\cdots v_d)^{\ell/2}\,d\mu(\tau)\\
&\phantom{=}{}+\frac{1}{\sqrt{D}}\int_{\tilde \Gamma_\infty\bs \H^d}^{reg}\langle f_{m,\mu}(\tau,s), E_L(\tau,s_0, \tilde\kappa)\rangle (v_2\cdots v_d)^{\ell/2}\,d\mu(\tau).
\end{align*}
In view of Lemma \ref{Siegel-Weil}, the second summand on the right hand side
is equal to
\begin{align*}
&\frac{1}{\sqrt{D}\vol(X_K)}\int_{X_K}
\int_{\tilde \Gamma_\infty\bs \H^d}^{reg}\langle f_{m,\mu}(\tau,s), \Theta_L(\tau, z,h)\rangle (v_2\cdots v_d)^{\ell/2}\,d\mu(\tau)\,\Omega^n\\
&=\frac{1}{\vol(X_K)}\int_{X_K} \Phi_{m,\mu}(z,h,s)\,\Omega^n \\
&=\frac{2n}{(s^2-s_0^2)\Gamma(\frac{s-s_0}{2}+1)}\frac{\deg(Z(m,\mu))}{\vol(X_K)}.
\end{align*}
Here the last equality follows from Theorem 5.7 of \cite{Br2} applied with the test function $1$. This proves the identity of the proposition. The convergence statement follows from
Proposition \ref{prop:thetatilde} and the asymptotics for Whittaker forms, see Section \ref{sect:3.2} and \cite[Section~4]{Br2}.
\end{proof}

Let $f\in  H_{k,\bar\rho_L}$ be a harmonic Whittaker form.
If $n>2$, we define
\begin{align}
\label{eq:tildephi}
\tilde\Phi(z,h,f)&=
\frac{1}{\sqrt{D}}\int_{\tilde \Gamma_\infty\bs \H^d}^{reg}\langle f(\tau), \tilde \Theta_L(\tau, z,h)\rangle (v_2\cdots v_d)^{\ell/2}\,d\mu(\tau)
\\ \nonumber
&=\frac{1}{\sqrt{D}}\int_{v\in (\R_{>0})^d}\left(\int_{u\in \calO_F\bs \R^d}
\langle f(\tau), \tilde\Theta_L(\tau, z,h)\rangle \,du\right) (v_2\cdots v_d)^{\ell/2}
\,\frac{dv}{\norm(v)^{2}}.
\end{align}
Note that the regularized integral converges.
If $n\geq 1$, we define $\tilde\Phi(z,h,f)$ as the value at $s'=0$ of the holomorphic continuation in $s'$ of
\begin{align}
\label{eq:tildephi2}
\frac{1}{\sqrt{D}}\int_{\tilde \Gamma_\infty\bs \H^d}^{reg}\langle f(\tau), \tilde \Theta_L(\tau, z,h)\rangle (v_2\cdots v_d)^{\ell/2}\norm(v)^{s'}\,d\mu(\tau).
\end{align}
Here the regularized integral converges for $\frac{2-n}{4}<\Re(s')<1$. The fact that it has a continuation in $s'$ can be deduced from the continuation in $s$ of $\Phi(z,h,s,f)$.

\begin{corollary}
Let $f\in  H_{k,\bar\rho_L}$ be a harmonic Whittaker form. We have
\begin{align*}
\Phi(z,h,f)&=
\tilde \Phi(z,h,f)+B(f)\big(\Gamma'(1)+2/n\big).
\end{align*}
\end{corollary}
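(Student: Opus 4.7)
The plan is to combine the previous proposition (which rewrites $\Phi_{m,\mu}(z,h,s)$ as a convergent regularized integral involving $\tilde\Theta_L$ plus an explicit meromorphic correction term) with the definition of $\Phi(z,h,f)$ as the constant term of the Laurent expansion of $\Phi(z,h,s,f)$ at $s=s_0$.

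First I would sum the identity from the preceding proposition against the coefficients $c(m,\mu)$ of $f$. Using the formula \eqref{eq:divf2} to rewrite $\deg(Z(m,\mu))/\vol(X_K) = -B_L(m,\mu)$ and the definition \eqref{eq:Bf} of $B(f)$, this gives
\begin{equation*}
\Phi(z,h,s,f) = I(s,f) - \frac{2n\,B(f)}{(s^2-s_0^2)\,\Gamma(\tfrac{s-s_0}{2}+1)},
\end{equation*}
where $I(s,f) := \frac{1}{\sqrt D}\int^{reg}_{\tilde\Gamma_\infty\backslash\H^d}\langle f(\tau,s), \tilde\Theta_L(\tau,z,h)\rangle (v_2\cdots v_d)^{\ell/2}\,d\mu(\tau)$ and $f(\tau,s) = \sum c(m,\mu) f_{m,\mu}(\tau,s)$. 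By the proposition, $I(s,f)$ is holomorphic in a half-plane containing $s_0$, and its value at $s_0$ is exactly $\tilde\Phi(z,h,f)$ (this is immediate for $n>2$ from \eqref{eq:tildephi}, and follows by analytic continuation in the auxiliary parameter $s'$ of \eqref{eq:tildephi2} in general).

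Next I would expand the correction term near $s=s_0$. Since $s^2 - s_0^2 = (s-s_0)(s+s_0)$ with $s+s_0 = n + (s-s_0)$, and $\Gamma(\tfrac{s-s_0}{2}+1) = 1 + \tfrac12\Gamma'(1)(s-s_0) + O((s-s_0)^2)$, one gets the Laurent expansion
\begin{equation*}
\frac{1}{(s^2-s_0^2)\,\Gamma(\tfrac{s-s_0}{2}+1)} = \frac{1}{n(s-s_0)} - \frac{1}{n^2} - \frac{\Gamma'(1)}{2n} + O(s-s_0).
\end{equation*}
Multiplying by $-2n\,B(f)$ produces the principal part $-\frac{2B(f)}{s-s_0}$ (which correctly matches the residue statement recalled from \cite[Corollary 5.9]{Br2}) plus a constant term $B(f)\bigl(\Gamma'(1) + 2/n\bigr)$.

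Finally, I would take the constant term in the Laurent expansion at $s=s_0$:
\begin{equation*}
\Phi(z,h,f) = I(s_0,f) + B(f)\bigl(\Gamma'(1) + 2/n\bigr) = \tilde\Phi(z,h,f) + B(f)\bigl(\Gamma'(1) + 2/n\bigr).
\end{equation*}
No step is really a serious obstacle, since all the analytic input (meromorphic continuation of $\Phi(z,h,s,f)$, residue computation, convergence of the $\tilde\Theta_L$-integral) has already been established; the only mild subtlety is justifying $I(s_0,f)=\tilde\Phi(z,h,f)$ in the range $1\le n\le 2$, which is handled by the regularization via $\norm(v)^{s'}$ in \eqref{eq:tildephi2} together with the analytic continuation of $\Phi(z,h,s,f)$ provided by \cite{Br2}.
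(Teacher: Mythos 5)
Your proposal is correct and is precisely the argument the paper leaves implicit: the corollary is stated without proof immediately after the proposition, and the intended derivation is exactly your summation against $c(m,\mu)$, the substitution $\deg(Z(m,\mu))/\vol(X_K)=-B_L(m,\mu)$, and the Laurent expansion of $2n/\bigl((s^2-s_0^2)\Gamma(\tfrac{s-s_0}{2}+1)\bigr)$ at $s=s_0$, whose principal part reproduces the residue $-2B(f)$ and whose constant term gives $B(f)\bigl(\Gamma'(1)+2/n\bigr)$. Your expansion is arithmetically correct, and your closing remark about the $1\le n\le 2$ case correctly identifies the only point (the $\norm(v)^{s'}$ regularization of \eqref{eq:tildephi2}) that the paper itself also treats only by reference.
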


\begin{remark}
By construction, $\tilde \Phi(z,h,f)$ is normalized such that
\[
\int_{X_K}\tilde \Phi(z,h,f) \Omega^n =0.
\]
\end{remark}

\section{CM values of automorphic Green functions}

\label{sect:6}


Let $W\subset V$ be a maximal totally positive definite subspace.
Recall that the CM cycle $Z(W)$ is given by
\begin{align*}
H_W(\Q)\bs \D_W\times H_W(\hat\Q)/(H_W(\hat \Q)\cap K)
\longrightarrow X_K.
\end{align*}
If $f\in   H_{k,\bar\rho_L}$ is a harmonic Whittaker form, we aim to compute the CM value
\begin{align}
\label{tocompute}
\Phi(Z(W),f):=
\sum_{[z,h]\in  Z(W) } \Phi(z,h,f):= \sum_{ [z, h] \in \supp(Z(W))} \frac{2}{w_K} \Phi(z, h, f).
\end{align}
Recall that each  point in $Z(W)$ is counted with  multiplicity $2/w_K$ (see Section \ref{sect:4.3}).

By means of the splitting $V=W\oplus W^\perp$, we obtain $\calO_F$-lattices
\begin{align*}
P&=L\cap W,\\
N&=L\cap W^\perp.
\end{align*}
The lattice $P$ is totally positive definite, while $N$ has signature $((0,2),(2,0),\dots ,(2,0))$.
Then  $N\oplus P\subset L$ is a sublattice of finite index.
As in \cite[Lemma 3.1]{BY}, we may view $f$ as a Whittaker form $f_{P\oplus N}$ for the sublattice $P\oplus N$.
We have
\begin{align}
\label{eq:latticeflex}
\langle f, \Theta_L \rangle =\langle f_{P\oplus N}, \Theta_{P \oplus N} \rangle,
\end{align}
so we may assume below that $L=P\oplus N$ if we
replace $f$ by $f_{P\oplus N}$.

For any cusp form $g\in S_{\kappa,\rho_L}$ we define an $L$-function by means
of the convolution integral
\begin{align}
\label{eq:L} L(g, W,s)=\big( \Theta_P(\tau)\otimes E_N(\tau,s,\kappa_N),\,
g(\tau)\big)_{Pet},
\end{align}
where $E_N(\tau,s,\kappa_N)$ denotes the incoherent Eisenstein series of weight $\kappa_N=(1,\dots, 1)$
associated to the lattice $N\subset W^\perp$ defined in Section \ref{sect:4.4}.
The Petersson scalar product is normalized as in \cite[(4.21)]{Br2}.
The meromorphic continuation of the Eisenstein series
$E_N(\tau,s,\kappa_N)$ leads to a meromorphic continuation of $L(g, W,
s)$ to the whole complex plane. At $s=0$, the center of symmetry,
$L(g, W, s)$ vanishes because the Eisenstein series vanishes at that point by Proposition \ref{prop4.9}.
%
Let
\begin{align}
g(\tau)&=\sum_{\mu\in L'/L} \sum_{m\gg0} b(m,\mu) q^m\chi_\mu,\\
\Theta_P(\tau)&=\sum_{\mu\in P'/P} \sum_{m } r(m,\mu)
q^m\chi_\mu
\end{align}
be the Fourier expansion of $g$ and $\Theta_P$, respectively. Using
the usual unfolding argument, one obtains a Dirichlet series
expansion of $L(g,W,s)$. For instance, if the narrow class number of $F$ is one, we have
\begin{align}
\label{eq:luf} L(g, W,
s)=(4\pi)^{-d(s+n)/2}\Gamma\left(\tfrac{s+n}{2}\right)^d\sum_{m\in \partial^{-1}/(\calO_F^\times)^2}\sum_{\mu\in
P'/P} r(m,\mu) \overline{b(m,\mu)}  \norm(m)^{-(s+n)/2}.
\end{align}
Here the first sum runs through the orbits of $\partial^{-1}$ modulo the action of $(\calO_F^\times)^2$, the group of units which are a square.


Before stating our main result we need the following lemmas.

\begin{lemma}
\label{lem:gr1}
Let $m\in F$ be totally positive. The $m$-th Fourier coefficient of $\Theta_P\otimes \calE_N -2\calE_L$ is $O( v^{-\kappa/2})$ as $v_i\to 0$.
\end{lemma}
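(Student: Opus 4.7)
My strategy is to show that $\Theta_P\otimes\calE_N-2\calE_L$ is, in fact, holomorphic in $\tau$, whereupon its $m$-th Fourier coefficients are independent of $v$ and the claimed bound is trivial. Using the decompositions from the corollary following Proposition~\ref{prop4.7} (applied to $L$, with $n_L=n$) and from Corollary~\ref{cor4.10} (applied to $N$, with $n_N=0$), I would split
\[
\Theta_P\otimes\calE_N-2\calE_L=\bigl(\Theta_P\otimes\calE_N^{(0)}-2\calE_L^{(0)}\bigr)+\sum_{j=1}^{d}\bigl(\Theta_P\otimes\calE_N^{(j)}-2\calE_L^{(j)}\bigr).
\]
The $j=0$ summand is holomorphic by construction, so its $m$-th Fourier coefficient is independent of $v$. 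For each $j\geq 1$ the non-holomorphy of both $\calE_N^{(j)}$ and $\calE_L^{(j)}$ is confined to the single variable $\tau_j$, so it suffices to show that each $\Theta_P\otimes\calE_N^{(j)}-2\calE_L^{(j)}$ is holomorphic in $\tau_j$.

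Applying the Maass lowering operator $L^{(j)}_{\kappa_j}$ and using equation~\eqref{eq:eisrel2} (and its analogue at the other archimedean places for $j\geq 2$), together with the fact that $\Theta_P$ is holomorphic, one obtains
\[
2\,L^{(j)}_{\kappa_j}(\Theta_P\otimes\calE_N)=\Theta_P\otimes E_N(\tau,0,\tilde\kappa_N),\qquad L^{(j)}_{\kappa_j}(2\calE_L)=E_L(\tau,s_0,\tilde\kappa).
\]
The crux is then the see-saw identity
\[
\Theta_P(\tau)\otimes E_N(\tau,0,\tilde\kappa_N)=2\,E_L(\tau,s_0,\tilde\kappa),
\]
which I would derive from the Siegel--Weil formula (Lemma~\ref{Siegel-Weil}) combined with the see-saw dual pairs $(\SL_2,\Orth(V))$ and $(\SL_2\times\SL_2,\Orth(W)\times\Orth(W^\perp))$ coming from the splitting $V=W\oplus W^\perp$. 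Concretely, at the CM point $z_W\in\D_W$ the Siegel theta function factors as $\Theta_L(\tau,z_W,h)=\Theta_P(\tau)\otimes\theta_N(\tau,z_W,h)$ for $h\in H_W(\hat\Q)$, and applying Siegel--Weil for $V$ over $X_K$ together with its analogue for $N$ over the CM cycle $Z(W)$ matches the Fourier expansions on both sides. Granting this identity, each $L^{(j)}_{\kappa_j}$ annihilates the difference, so $\Theta_P\otimes\calE_N-2\calE_L$ is holomorphic in every $\tau_j$, hence globally holomorphic.

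The main obstacle is the rigorous proof of the see-saw identity. While natural from the dual pair viewpoint, it requires matching local Whittaker integrals at every place of $F$: at the finite places one exploits the factorization $\lambda(\chi_L)=\lambda(\chi_P)\otimes\lambda(\chi_N)$ coming from $L=P\oplus N$, while at the infinite places one must track the normalizations tied to the incoherent structure at $\sigma_1$ and in particular correctly identify the factor of $2$ on the right-hand side, which reflects both the two orientation components of $\D_W$ and the relative volume normalizations between $X_K$ and $Z(W)$.
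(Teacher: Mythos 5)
Your proposal hinges on the ``see-saw identity'' $\Theta_P(\tau)\otimes E_N(\tau,0,\tilde\kappa_N)=2\,E_L(\tau,s_0,\tilde\kappa)$, and this identity is false, so the argument collapses. The Siegel--Weil formula (Lemma~\ref{Siegel-Weil}) identifies $E_L(\tau,s_0,\tilde\kappa)$ with the average of $\Theta_L(\tau,z,h)$ over the full $n$-dimensional variety $X_K$, whereas by \eqref{eq:kms2} and the splitting $\Theta_L(\tau,z_W^{\pm},h)=\Theta_P(\tau)\otimes\Theta_N(\tau,z_W^{\pm},h)$, the function $\Theta_P\otimes E_N(\tau,0,\tilde\kappa_N)$ is the average of $\Theta_L(\tau,z,h)$ over the zero-dimensional CM cycle $Z(W)$ only. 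These two averages have (essentially) matching constant terms, but their positive Fourier coefficients differ; indeed their difference is precisely the kernel in the proof of Theorem~\ref{thm:fund}, where
\[
\sum_{[z,h]\in Z(W)}\tilde\Theta_L(\tau,z,h)=\frac{\deg(Z(W))}{2}\Bigl(\Theta_P(\tau)\otimes E_N(\tau,0,\tilde\kappa_N)-2E_L(\tau,s_0,\tilde\kappa)\Bigr).
\]
If your identity held, this would vanish identically and Theorem~\ref{thm:fund} would give $\tilde\Phi(Z(W),f)=0$ for every harmonic Whittaker form $f$ and every CM cycle --- contradicted, via Theorem~\ref{thm:fundtr} and Theorem~\ref{thm:bop}, by the manifestly $W$-dependent nonzero CM values in Table~\ref{tab:1}. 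Equivalently, applying $L^{(1)}$ and \eqref{eq:eisrel2} as you do shows that $2L^{(1)}\bigl(\Theta_P\otimes\calE_N-2\calE_L\bigr)$ equals exactly this nonvanishing difference, so $\Theta_P\otimes\calE_N-2\calE_L$ is genuinely non-holomorphic and the lemma cannot be reduced to holomorphy. (A secondary gap: even at the formal level, the ``analogues of \eqref{eq:eisrel2} at the other archimedean places'' that you invoke for $j\ge 2$ involve the neighboring quadratic spaces $V_j$ and the conjugate varieties $X_{K,j}$, which your argument does not set up.)

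What is true, and what the paper proves, is only a growth estimate. One isolates the constant terms $\calE_{N,0}$ and $\calE_{L,0}$ at each cusp; by Propositions~\ref{prop4.7} and~\ref{prop4.9} the $\log\norm(v)$ contributions of $\chi_{0+P}\otimes\calE_{N,0}$ and $2\calE_{L,0}$ cancel, leaving an $S_L$-valued constant plus an $O(\norm(v)^{-s_0})$ term modulo rapidly decreasing functions. Subtracting a suitable linear combination $G$ of holomorphic Eisenstein series of weight $\kappa$, the remainder decays at all cusps, so $|\Theta_P\otimes\calE_N-2\calE_L-G|\,v^{\kappa/2}$ is bounded on $\H^d$, which yields the stated $O(v^{-\kappa/2})$ bound on the Fourier coefficients. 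You would need to replace your holomorphy claim by an argument of this type.
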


\begin{proof}
Let $\calE_{N,0}(\tau)$  denote the $S_N$-valued constant term of the Fourier expansion of $\calE_{N}(\tau)$ at the cusp $\infty$.
Let $\calE_{L,0}(\tau)$  denote the $S_L$-valued constant term of the Fourier expansion of $\calE_{L}(\tau)$ at the cusp $\infty$.
Then $\calE_N-\calE_{N,0}$ and $\calE_L-\calE_{L,0}$ are rapidly decreasing at the cusp $\infty$.
Consequently,
\begin{align*}
\Theta_P\otimes \calE_N -2\calE_L &= \Theta_P\otimes \calE_{N,0} -2\calE_{L,0} +R_1\\
&=\chi_{0+P}\otimes\calE_{N,0} -2\calE_{L,0} +R_2,
\end{align*}
where $R_1$ and $R_2$ are rapidly decreasing at the cusp $\infty$.
It follows from Proposition \ref{prop4.7} and Proposition \ref{prop4.9}
that
the $\log(\norm(v))$ parts of
$\chi_{0+P}\otimes \calE_{N,0}$ and $2\calE_{L,0}$ cancel.
Hence
\[
\chi_{0+P}\otimes\calE_{N,0} -2\calE_{L,0}=\beta + \alpha \norm(v)^{-s_0}
\]
for some $S_L$-valued constants $\beta$ and $\alpha$. Moreover, $\alpha=0$ when $n=1$.
Consequently, near the cusp $\infty$, the function
$\Theta_P\otimes \calE_N -2\calE_L$ is the sum of a  constant and a function that decays at least as $O(\alpha \norm(v)^{-s_0})$.
The same analysis applies to the other cusps.
Hence there exists a linear combination $G$ of holomorphic Eisenstein series of weight $\kappa$ for the group $\tilde\Gamma$ such that
$\Theta_P\otimes \calE_N -2\calE_L-G$ decays as  $O(\alpha \norm(v)^{-s_0})$ at all cusps. (When $\alpha=0$ it is rapidly decreasing.)
Therefore
$|\Theta_P\otimes \calE_N -2\calE_L-G|v^{\kappa/2}$ is bounded on $\H^d$.
This implies the assertion.
\end{proof}

\begin{lemma}
\label{lem:gr2}
Let $m\in F$ be totally positive, and let $j\in \{1,\dots,d\}$.
The $m$-th Fourier coefficient of $\Theta_P\otimes \calE_N^{(j)} -2\calE_L^{(j)}$ is $O(1)$, as $v_i\to 0$ for $i\neq j$. It is $O( v^{-\kappa_j/2})$ as $v_j\to 0$.
\end{lemma}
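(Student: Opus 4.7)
The strategy is to exploit the fact that each piece $\calE^{(j)}$ (with $j\geq 1$) is holomorphic in the variables $\tau_i$ for $i\neq j$. Inspecting the explicit Fourier expansions in the corollary following Proposition~\ref{prop4.7} (for $\calE_L$ when $n\geq 1$) and in Corollary~\ref{cor: n=0} (for $\calE_N$, and for $\calE_L$ when $n=0$), one sees that both $\calE_L^{(j)}$ and $\calE_N^{(j)}$ are holomorphic in $\tau_i$ for every $i\neq j$. Since $\Theta_P$ is a fully holomorphic theta series attached to the totally positive definite lattice $P$, the tensor product $\Theta_P\otimes\calE_N^{(j)}$ inherits this partial holomorphicity. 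A standard Cauchy--Riemann argument then shows that the $m$-th Fourier coefficient (viewed as a function of $v$) of $\Theta_P\otimes\calE_N^{(j)}-2\calE_L^{(j)}$ depends on $v_i$, $i\neq j$, only through the exponential factor $e^{-2\pi m_i v_i}$. This immediately gives the first estimate: as $v_i\to 0$ for $i\neq j$, these exponentials tend to one, so the $m$-th coefficient approaches a finite limit and is therefore $O(1)$.

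For the second estimate the plan is to apply Lemma~\ref{lem:gr1} together with the decomposition
$$
\Theta_P\otimes\calE_N-2\calE_L \;=\; \sum_{j'=0}^{d}\bigl(\Theta_P\otimes\calE_N^{(j')}-2\calE_L^{(j')}\bigr),
$$
which follows from the splittings $\calE_L=\sum_{j'=0}^d\calE_L^{(j')}$ and $\calE_N=\sum_{j'=0}^d\calE_N^{(j')}$ provided by the two corollaries. Lemma~\ref{lem:gr1} gives that the $m$-th Fourier coefficient of the left hand side is $O(v^{-\kappa/2})$ as the $v_i$ tend to $0$; fixing the $v_i$ with $i\neq j$ at positive values, this specializes to $O(v_j^{-\kappa_j/2})$ as $v_j\to 0$. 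On the right hand side, the summand with $j'=0$ is fully holomorphic, since both $\calE_L^{(0)}$ and $\calE_N^{(0)}$ are holomorphic in $\tau$, so its $m$-th Fourier coefficient is bounded uniformly in $v_j$; each summand with $j'\geq 1$ and $j'\neq j$ is holomorphic in $\tau_j$, so by the first paragraph (applied with $j'$ in place of $j$) its $m$-th Fourier coefficient is independent of $v_j$ up to a bounded exponential factor, hence $O(1)$ as $v_j\to 0$.

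Subtracting these $d$ contributions of order $O(1)$ from the total bound $O(v_j^{-\kappa_j/2})$ leaves the desired estimate for the remaining $j'=j$ summand. The main point requiring care is to read off the partial holomorphicity of $\calE_L^{(j)}$ and $\calE_N^{(j)}$ (and the full holomorphicity of $\calE^{(0)}$) from the explicit expansions; this is exactly what the two corollaries provide. Once that is in place, the argument is entirely formal and poses no analytic obstacle, since all required bounds on the full Fourier coefficients have already been supplied by Lemma~\ref{lem:gr1}.
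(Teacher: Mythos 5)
Your proposal is correct and follows essentially the same route as the paper: the first estimate comes from holomorphy of $\Theta_P\otimes \calE_N^{(j)}-2\calE_L^{(j)}$ in $\tau_i$ for $i\neq j$, and the second from subtracting the part holomorphic in $\tau_j$ (the paper writes it as $\Theta_P\otimes(\calE_N-\calE_N^{(j)})-2(\calE_L-\calE_L^{(j)})$, which is just your sum over $j'\neq j$ grouped together) and invoking Lemma~\ref{lem:gr1} for the full sum.
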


\begin{proof}
The function $\Theta_P\otimes \calE_N^{(j)} -2\calE_L^{(j)}$ is holomorphic in $\tau_i$ for $i\neq j$.
Hence its $m$-th Fourier coefficient is independent of $\tau_i$, and therefore bounded as $v_i\to 0$.

Moreover, we have
\[
\Theta_P\otimes \calE_N^{(j)} -2\calE_L^{(j)}=
(\Theta_P\otimes \calE_N -2\calE_L)
- \left(\Theta_P\otimes  (\calE_N-\calE_N^{(j)}) -2(\calE_L -\calE_L^{(j)})
\right).
 \]
The second quantity  on the right hand side is holomorphic in $\tau_j$, and therefore its $m$-th Fourier coefficient is bounded as $v_j\to 0$. Hence the growth in $v_j$ follows from Lemma~\ref{lem:gr1}.
\end{proof}

\begin{theorem}
\label{thm:fund}
Let $f\in   H_{k,\bar\rho_L}$ and assume that
$Z(W)$ and $Z(f)$ do not intersect on $X_K$.
If $n>2$ then the the value of the automorphic Green function
$\tilde\Phi(z,h,f)$ at the CM cycle $Z(W)$ is given by
\begin{align*}
\tilde\Phi(Z(W),f)&=-
\frac{\deg(Z(W))}{\sqrt{D}}\cdot
\int_{\tilde \Gamma_\infty\bs \H^d}^{reg}
\big\langle \overline{\delta ( f)}, \,\Theta_P\otimes \calE_N^{(1)}-  2\calE_L^{(1)}\big\rangle v^\kappa d\mu(\tau).
\end{align*}
If $n\geq 1$ then the CM value $\tilde\Phi(Z(W),f)$ is given by the value at $s'=0$ of the holomorphic continuation in $s'$ of
\begin{align*}
-\frac{\deg(Z(W))}{\sqrt{D}}\cdot
\int_{\tilde \Gamma_\infty\bs \H^d}^{reg}
\big\langle \overline{\delta ( f)}, \,\Theta_P\otimes \calE_N^{(1)}-  2\calE_L^{(1)}\big\rangle  v^\kappa \norm(v)^{s'}d\mu(\tau).
\end{align*}
Here the regularized integral converges for $\frac{2-n}{4}<\Re(s')<1$.
\end{theorem}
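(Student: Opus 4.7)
The strategy is to reduce the sum over $Z(W)$ to an integral against Eisenstein series via Siegel--Weil, convert the resulting coherent Eisenstein series into derivatives of incoherent ones by the Maass lowering operator, and then integrate by parts in $\tau_1$.

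First, using \eqref{eq:latticeflex} I would replace $L$ by $P\oplus N$ and factor the Siegel theta as $\Theta_L(\tau,z,h)=\Theta_P(\tau,h)\cdot\Theta_N(\tau,z,h)$. Since $\Theta_P$ is independent of $z$, Lemma~\ref{Siegel-Weil} applied to the quadratic space $W^\perp$ (the $n=0$ case, with $Z(W)$ identified with the corresponding zero-dimensional Shimura variety) gives
$$\sum_{[z,h]\in Z(W)}\Theta_N(\tau,z,h)\;=\;\frac{\deg Z(W)}{2}\,E_N(\tau,0,\tilde\kappa_N).$$
Interchanging this CM sum with the regularized integral in \eqref{eq:tildephi}/\eqref{eq:tildephi2} and subtracting $\deg Z(W)\cdot E_L(\tau,s_0,\tilde\kappa)$ then yields
$$\sum_{[z,h]\in Z(W)}\tilde\Theta_L(\tau,z,h)\;=\;\tfrac{\deg Z(W)}{2}\bigl[\Theta_P\otimes E_N(\tau,0,\tilde\kappa_N)-2E_L(\tau,s_0,\tilde\kappa)\bigr].$$

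Next, I apply the Maass-lowering identity \eqref{eq:eisrel2} to both coherent Eisenstein series. Because $\Theta_P$ is holomorphic, $L^{(1)}$ commutes past it, and
$$\Theta_P\otimes E_N(\tau,0,\tilde\kappa_N)-2E_L(\tau,s_0,\tilde\kappa)\;=\;2L^{(1)}G, \qquad G:=\Theta_P\otimes\calE_N-2\calE_L.$$
The decompositions $\calE_L=\sum_{j=0}^d\calE_L^{(j)}$ and $\calE_N=\sum_{j=0}^d\calE_N^{(j)}$ from Sections \ref{sect:4.6} and \ref{sect:4.7} show that only the $j=1$ pieces fail to be holomorphic in $\tau_1$, so $L^{(1)}G=L^{(1)}G^{(1)}$ with $G^{(1)}:=\Theta_P\otimes\calE_N^{(1)}-2\calE_L^{(1)}$.

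The final step is integration by parts in $\tau_1$ on the fundamental domain $\tilde\Gamma_\infty\backslash\H^d$. The relation $\overline{\delta(f)}=v_1^{k_1-2}L^{(1)}f$ from \eqref{def:delta}, together with the weight-matching identities $k_1+\kappa_1=2$ and $\ell/2=\kappa_j$ for $j\ge 2$, converts $L^{(1)}f\cdot(v_2\cdots v_d)^{\ell/2}d\mu(\tau)$ into $\overline{\delta(f)}\cdot v^\kappa d\mu(\tau)$; periodicity in $u_1$ kills the $u_1$-boundary contributions, and the sign $-1$ in the theorem comes from integration by parts. The main obstacle is controlling the $v_1\to 0,\infty$ boundary terms. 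At $v_1\to\infty$ the rapid decay of $f$ dominates; at $v_1\to 0$ the asymptotic $f_{m,\mu}(\tau,s_0)=O(v_1^{n/2})$ read off from \eqref{Mspecial}, combined with the growth estimate on $G^{(1)}$ in Lemma \ref{lem:gr2}, gives vanishing for $n>2$. For $n\in\{1,2\}$ the regularizing factor $\norm(v)^{s'}$ must be carried through Siegel--Weil, the lowering identity, and the integration by parts, and the asserted equality is obtained by holomorphic continuation to $s'=0$, in direct parallel with \cite[Section~6]{Br2}.
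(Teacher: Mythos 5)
Your overall architecture coincides with the paper's proof: split $\Theta_L=\Theta_P\otimes\Theta_N$, apply the $n=0$ Siegel--Weil formula to the CM sum, rewrite the coherent Eisenstein series via the lowering-operator identity \eqref{eq:eisrel2} as $\bar\partial_1$ of $\calE_N^{(1)}$ and $\calE_L^{(1)}$, and integrate by parts over $\tilde\Gamma_\infty\bs\H^d$, with the $v_1\to 0$ boundary term controlled by the $O(v_1^{n/2})$ asymptotics of $f$ against the $O(v_1^{-\kappa_1/2})$ growth of the Eisenstein data, exactly as in the paper.

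There is, however, a genuine error in your treatment of the boundary term at $v_1\to\infty$: you claim it vanishes because of ``the rapid decay of $f$,'' but a harmonic Whittaker form does \emph{not} decay there. From \eqref{eq:fs0} one has $|f_{m,\mu}(\tau)|\sim C\,e^{2\pi m_1 v_1}e^{-2\pi\sum_{j\geq 2}m_jv_j}$ as $v_1\to\infty$, i.e.\ exponential growth in $v_1$ (these forms are designed to mimic a principal part $q^{-m}$). The correct argument, and the one the paper gives, is that after pairing with $\Theta_P\otimes\calE_N^{(1)}-2\calE_L^{(1)}$ and extracting the constant term $g_0(v)$, the exponentials cancel and what survives is governed by the non-holomorphic factors in the Fourier coefficients of $\calE_L^{(1)}$ and $\calE_N^{(1)}$: one uses $J(s_0,4\pi m_1v_1)=O(v_1^{-1})$ and the exponential decay of $\Gamma(-s_0,-4\pi m_1v_1)$ to conclude $g_0(v)=O(v_1^{-1})$. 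This is not a cosmetic point: it is precisely the place where the decomposition $\calE=\sum_j\calE^{(j)}$ earns its keep. If your reasoning were valid it would apply verbatim to the holomorphic piece $\calE^{(0)}$ as well, whereas in the proof of Theorem \ref{thm:fundtr} the same $v_1\to\infty$ limit taken against $\Theta_P\otimes\calE_\N^{(0)}-2\calE_\LL^{(0)}$ does \emph{not} vanish and produces the main term $\CT\langle\calP,\Theta_P\otimes\calE_\N^{(0)}-2\calE_\LL^{(0)}\rangle$. You need to replace the decay-of-$f$ claim by the analysis of the constant term of the product.
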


\begin{proof}
First, we assume that $n>2$ so that we can use the integral representation \eqref{eq:tildephi}.
In view of \eqref{eq:latticeflex} we may also assume without loss of generality that $L=P\oplus N$.
By definition we have
\begin{align}
\label{eq:tpc}
\tilde\Phi(Z(W),f)&=
 \frac{1}{\sqrt{D}}\cdot
\int_{\tilde \Gamma_\infty\bs \H^d}^{reg}
\sum_{[z,h]\in Z(W)}
\big\langle f(\tau), \tilde \Theta_L(\tau, z_W^+,h)\big\rangle (v_2\cdots v_d)^{\ell/2}\, d\mu(\tau).
\end{align}

For $z=z_W^\pm$ and $h\in
H_W(\hat \Q)$, the Siegel theta function $\Theta_L(\tau,z,h)$ splits up
as a product
\begin{align}
\label{splittheta} \Theta_L(\tau,z_W^\pm,h)= \Theta_P(\tau)\otimes
\Theta_N(\tau,z_W^\pm,h).
\end{align}
Here $\Theta_P(\tau)=\Theta_P(\tau,1)$ is the holomorphic
$S_P$-valued theta function of parallel weight $n/2$ associated to the
totally positive definite lattice $P$.
On the other hand, according to the  Siegel Weil formula \eqref{eq:kms2} we have
\begin{align}
\sum_{[z,h]\in  Z(W)} \Theta_{L}(\tau,z,h)
=\frac{\deg(Z(W))}{2}\cdot E_N(\tau,s,\tilde\kappa_N),
\end{align}
where $E_N(\tau,s,\tilde\kappa_N)$ denotes the coherent Hilbert Eisenstein series defined in Section \ref{sect:4.3} of weight $\tilde\kappa_N=(-1,1,\dots,1)$  associated to the lattice $N\subset W^\perp$.
Inserting this into \eqref{eq:tpc} we obtain
\begin{align*}
\tilde\Phi(Z(W),f)&=
\frac{\deg( Z(W))}{2\sqrt{D}}\\
&\times\int_{\tilde \Gamma_\infty\bs \H^d}^{reg}
\left\langle f(\tau), \Theta_P(\tau)\otimes E_N(\tau,0,\tilde\kappa_N)
-2E_L(\tau,\tfrac{n}{2},\tilde\kappa_V)
\right\rangle (v_2\cdots v_d)^{\ell/2}\, d\mu(\tau).
\end{align*}
In view of \eqref{eq:eisrel2}, if we write
$\eta= (v_2\cdots v_d)^{\ell/2}d\tau_1 d\mu(\tau_2)\cdots d\mu(\tau_d)$,
we have the following identities of differential forms on $\H^d$:
\begin{align*}
-2\bar \partial \calE_N^{(1)}(\tau)\eta &=-2\bar \partial_1 \calE_N(\tau)\eta=E_N(\tau,0,\tilde\kappa_N)(v_2\cdots v_d)^{\ell/2}d\mu(\tau),\\
-2\bar \partial \calE_L^{(1)}(\tau)\eta &=-2\bar \partial_1 \calE_L(\tau)\eta=E_L(\tau,n/2,\tilde\kappa_V)(v_2\cdots v_d)^{\ell/2}d\mu(\tau).
\end{align*}
Consequently, we find
\begin{align*}
\tilde\Phi(Z(W),f)&=-
 \frac{\deg( Z(W))}{\sqrt{D}}\int_{\tilde \Gamma_\infty\bs \H^d}^{reg}
\left\langle f(\tau), \bar\partial \big( \Theta_P(\tau)\otimes \calE_N^{(1)}(\tau)
-  2\calE_L^{(1)}(\tau)\big)\eta
\right\rangle \\
&=-
 \frac{\deg( Z(W))}{\sqrt{D}}\int_{\tilde \Gamma_\infty\bs \H^d}^{reg}
d \left\langle f, \Theta_P\otimes \calE_N^{(1)}\eta
-  2\calE_L^{(1)}\eta
\right\rangle \\
&\phantom{=}{}-
 \frac{\deg( Z(W))}{\sqrt{D}}\int_{\tilde \Gamma_\infty\bs \H^d}^{reg}
\big\langle \overline{\delta ( f)}, \Theta_P\otimes \calE_N^{(1)}
-  2\calE_L^{(1)}(\tau)
\big\rangle v^\kappa d\mu(\tau),
\end{align*}
where $\delta $ is the differential operator on harmonic Whittaker forms defined in \eqref{def:delta}.
We have used that $\bar\partial(f\eta)= -\overline{\delta (f)}v^\kappa d\mu(\tau)$, see \cite[(6.2)]{Br2}.

We now show that the quantity
\begin{align}
\label{eq:q2}
 \frac{1}{\sqrt{D}}\int_{\tilde \Gamma_\infty\bs \H^d}^{reg}
d \left\langle f, \Theta_P\otimes \calE_N^{(1)}\eta
-  2\calE_L^{(1)}\eta
\right\rangle
\end{align}
vanishes. For $T>0$ we let $R_T\subset \R_{>0}^d$ be the rectangle
\[
R_T= [1/T,T]\times\dots \times [1/T,T].
\]
Using the invariance of the integrand under translations,
we find by  Stokes' theorem that \eqref{eq:q2} is equal to
\begin{align*}
\frac{1}{\sqrt{D}}\lim_{T\to \infty} \int_{\partial R_T}\int_{\calO_F\bs \R^d}
\left\langle f, \Theta_P\otimes \calE_N^{(1)}
-  2\calE_L^{(1)}
\right\rangle\eta.
\end{align*}
Notice that only the parts of the boundary $\partial R_T$ where $v_1=1/T$ or $v_1=T$ give a non-zero contribution.
Carrying out the integration over $u$, we see that \eqref{eq:q2}
is equal to
\begin{align}
\nonumber
&\lim_{v_1\to 0} \int_{v_2,\dots,v_d=0}^\infty
g_0(v)(v_2\cdots v_d)^{\ell/2-2}\, dv_2\dots dv_d\\
\label{eq:q3}
&-\lim_{v_1\to \infty} \int_{v_2,\dots,v_d=0}^\infty
g_0(v)(v_2\cdots v_d)^{\ell/2-2}\, dv_2\dots dv_d,
\end{align}
where $g_0(v)$ denotes the constant term of the Fourier series
\[
g(\tau)=\left\langle f, \Theta_P\otimes \calE_N^{(1)}
-  2\calE_L^{(1)}
\right\rangle.
\]
Using  Lemma \ref{lem:gr1} and the asymptotic behavior \eqref{Masy1} for harmonic Whittaker forms,
we see that
\[
g_0(\tau)=O(v_1^{(n-2)/4} )
\]
as $v_i\to 0$. Taking into account our assumption that $n>2$, this implies that
the first summand in \eqref{eq:q3}
vanishes. Moreover, it follows from the Fourier expansions of $\calE_N^{(1)}$ and $\calE_L^{(1)}$ that
\[
\int_{v_2,\dots,v_d=0}^\infty
g_0(v)(v_2\cdots v_d)^{\ell/2-2}\, dv_2\dots dv_d = O(v_1^{-1})
\]
as $v_1\to \infty$. In fact, for the contribution from $\calE_L^{(1)}$ this follows from the decay of the function $J(a,t)$ occurring in the Fourier coefficients. We have $J(a,t)=O(t^{-1})$ as $t\to \infty$ for $a>0$.
The contribution from $\Theta_P\otimes \calE_N^{(1)}$ is actually exponentially decreasing due to the exponential decay of the incomplete gamma function occurring in the Fourier coefficients of $\calE_N^{(1)}$.
Hence the second summand in \eqref{eq:q3} valishes as well.
This concludes  the proof of the theorem when $n>2$.

Finally, we note that for $n\leq 2$, we may use the integral representation  \eqref{eq:tildephi2}, and argue similarly with the appropriate modifications taking into account the extra $\norm(v)^{s'}$ term.
\end{proof}

\section{Shimura varieties associated to incoherent quadratic spaces} \label{sect:global}

\label{sect:7}

Actually, the complex manifold $X_K=H(\Q) \backslash \mathbb D \times H(\hat{\Q})/K$  studied in the previous sections is just one complex piece of a Shimura variety
$\mathbb X_K$ over $F$: We have $X_K =\mathbb X_K \times_{F, \sigma_1} \mathbb C$. To study the arithmetic of $\mathbb X_K$, we need also the complex points of $\mathbb X_K$
with respect to the other embeddings $\sigma_j: F \to \mathbb C$ and its integral structures at the finite primes  of $F$.
They are associated to `companion quadratic spaces' of $V$ over $F$. A more convenient way to describe these quadratic spaces is to use the notion of incoherent quadratic spaces first described by Kudla \cite{Ku:Annals}.

A quadratic space over $\A_F$ is a free $\A_F$-module $\V$ of
finite rank together with a non-degenerate quadratic form $Q:
\V\to \A_F$. If $w$ is a place of $F$, then $\V_w=\V\otimes
_{\A_F} F_w$ together with the induced quadratic form $Q_w:\V_w\to
F_w$ is a quadratic space over the local field $F_w$. We may view
$\V$ as the restricted product
\[
\V=\prod_{w\leq \infty} \V_w.
\]
The determinant of $\V$ defines an element of the idele group
$\A_F^\times$. Let $\ell$ denote the rank of $\V$, and let
$\chi_{\V,w}=((-1)^{\frac{\ell(\ell-1)}{2}}\det \V_w, \cdot )$ be the
quadratic character of $\V_w$ given by the Hilbert symbol. Then
$\chi_\V=\prod_w \chi_{\V,w}$ is a well defined character
$\A_F^\times\to \{\pm 1\}$. At a finite place $w < \infty$ the local
quadratic space $\V_w$ is determined up to isometry by $\ell$, the
character $\chi_{\V,w}$, and the Hasse invariant $\Hasse(\V_w)$.
At an infinite place $w$, the local space is determined by the
signature. We have $\Hasse(\V_w)=1$ for all but finitely many
places. We define
$$
\Hasse(\V)=\prod_{w\leq \infty}\Hasse(\V_w)\in \{\pm 1\}.
$$

A quadratic space $\V$ over $\A_F$ is called {\it admissible} if
$\chi_\V$ is trivial on $F^\times$. Such a
space is called coherent if there exists a global quadratic space
$V$ over $F$ such that $V(\A_F) =V \otimes_F \A_F \cong \V$. In this
case $V$ is uniquely determined up to isometry, and we will
identify $\V$ with $V(\A_F)$. If there exists no such global space,
$\V$ is called incoherent. An admissible quadratic space $\V$ is coherent if
and only if $\Hasse(\V)=1$.

Let $(\V, Q)$ be an admissible incoherent quadratic space over
$\A_F$ which is totally positive definite of
dimension $\ell=n+2$.
For every real embedding $\sigma_j$ there is an up to isometry
unique
quadratic space $V_j$ over $F$, called the
neighboring quadratic
space of $\V$ at $\sigma_j$ (following Kudla), such that
\begin{align}
\label{eq:nb}
V_{j, w}=
V_j \otimes_F F_w \cong
\begin{cases}\V_w,& \text{ for $w \ne \sigma_j$,}\\
\R^{n,2},& \text{ for $w =\sigma_j$.}
\end{cases}
\end{align}
Here $\R^{n,2}$ denotes the standard quadratic space over $\R$ of
signature $(n,2)$, that is $\R^{n+2}$ with the quadratic form
$x\mapsto x_1^2+\dots+x_n^2-x_{n+1}^2-x_{n+2}^2$. We fix the
neighboring spaces $V_j$, and we also fix isometries
\begin{align}
\label{eq:nu}
\nu_j: \hat{\V} \longrightarrow \hat{V}_j,
\end{align}
where $\hat\V=\prod_{\mathfrak p < \infty}\V_\mathfrak p$
and
$\hat{V}_j=V_j\otimes_F \hat F$.

We let
$H_j=\Res_{F/\Q}\GSpin(V_j)$ be the algebraic group over
$\Q$ given by Weil restriction of scalars.
We also consider the restricted direct product
\[
\calH=\GSpin(\hat{\V})=\prod_{\mathfrak p < \infty}\GSpin(\V_\mathfrak p).
\]
Via the isomorphism \eqref{eq:nu}
we will
identify $\calH$ with $\Gspin(V_j) (\hat{\mathbb Q})$.


Let $\D=\D^{n,2}$ be the Hermitian symmetric domain of oriented negative definite
$2$-planes in
$\R^{n,2}$. It has two components corresponding to the two possible choices of the orientation.
The group
$\Gspin(V_j)$ acts on $\mathbb D$ through the isomorphism
$V_{j, \sigma_j} \cong \R^{n,2}$.
Let $K\subset \calH$ be a compact
open subgroup. The Shimura variety
\[
X_{K,j}=H_j(\mathbb Q) \backslash \mathbb D \times
H_j(\hat{\mathbb Q})/K =H_j(\mathbb Q) \backslash \mathbb D \times
\calH/K
\]
associated to $(V_j,K)$ has a canonical model
$\X_{K, j}$ defined over $\sigma_j(F)\subset
\mathbb C$, see \cite{Shih}.

\begin{lemma}
\label{lem:shim:conj}
Let the notation be as above. There is a quasi-projective
variety $\X_K$ defined over $F$ such that for each embedding $\sigma_j: F
\to \mathbb R\subset \C$, the base change $\X_K \times_{F, \sigma_j} \sigma_j(F)$
is equal to $\X_{K,j}$.
We call $\X_K$ the Shimura variety associated to $(\V,K)$.
\end{lemma}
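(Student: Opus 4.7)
The plan is to combine Deligne's theory of canonical models of Shimura varieties with Galois descent. Concretely, I would take $\X_K$ to be $\X_{K,1}$ regarded as a variety over $F$ via the identification $\sigma_1\colon F\xrightarrow{\sim}\sigma_1(F)$, and then verify that for each embedding $\sigma_j$ the base change $\X_K\times_{F,\sigma_j}\sigma_j(F)$ coincides with $\X_{K,j}$.

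First, I would check that the reflex field of the Shimura datum $(H_j,X_j)$, where $X_j$ denotes the $H_j(\R)$-conjugacy class of Deligne homomorphisms attached to $\D$ via the identification $V_{j,\sigma_j}\cong\R^{n,2}$, is precisely $\sigma_j(F)\subset\C$. This is because the associated minuscule cocharacter $\mu_j\colon\G_{m,\C}\to H_{j,\C}$ factors nontrivially only through the $\sigma_j$-component of $H_{j,\C}=\prod_i\GSpin(V_{j,\sigma_i})_\C$, so its $\Gal(\overline{\Q}/\Q)$-conjugacy class is rational exactly over $\sigma_j(F)$; this also matches the field of definition given by Shih's theorem for the canonical model of $\X_{K,j}$.

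Next, for any $\tau\in\Gal(\overline{\Q}/\Q)$ with $\tau\circ\sigma_1=\sigma_j$, I would identify the Galois conjugate $\tau\cdot(H_1,X_1)$ with $(H_j,X_j)$. The permutation of factors induced by $\tau$ on $H_1\otimes_\Q\overline{\Q}$ transports the signature $(n,2)$ place of $V_1$ (namely $\sigma_1$) to that of $V_j$ (namely $\sigma_j$); the finite-adelic data align via $\nu_j\circ\nu_1^{-1}\colon\hat V_1\to\hat V_j$; and the remaining positive-definite archimedean components are determined up to isometry by their signature. Weak approximation for the orthogonal group, together with uniqueness of quadratic spaces over $F$ with prescribed local invariants, forces the two data to match. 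Invoking the uniqueness of canonical models then gives a canonical isomorphism $\tau(\X_{K,1})\cong\X_{K,j}$ over $\sigma_j(F)$.

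The resulting compatible system $\{\X_{K,j}\}_{j=1}^d$ then defines an effective Galois descent datum for the extension $\overline{\Q}/F$ (with $F$ embedded in $\overline{\Q}$ via $\sigma_1$), and because each $\X_{K,j}$ is quasi-projective, Galois descent produces a quasi-projective $F$-variety $\X_K$ with the required base-change property. The main obstacle I anticipate is the Galois-equivariance matching in the second step: verifying rigorously that the rigidifications $\nu_j$ at the finite places and the chosen identifications $V_{j,\sigma_j}\cong\R^{n,2}$ at the archimedean places fit together compatibly with the $\Gal(\overline{\Q}/\Q)$ action on neighboring spaces. A more intrinsic alternative would construct $\X_K$ directly from the incoherent datum $(\V,K)$ in the spirit of \cite{Ku:Annals}, thereby avoiding a distinguished embedding, but verification that the complex fibers agree with the $\X_{K,j}$ still passes through the canonical model theory of \cite{Shih}.
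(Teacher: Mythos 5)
Your proposal is correct and follows essentially the same route as the paper: the paper also defines $\X_K=\sigma_1^{-1}\X_{K,1}$ (i.e.\ $\X_{K,1}$ viewed over $F$ via $\sigma_1$) and invokes the theory of conjugates of Shimura varieties (Milne) to identify the base change $\sigma_j\sigma_1^{-1}\X_{K,1}$ with $\X_{K,j}$. You merely spell out the datum-matching and descent details that the paper leaves to the citation.
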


\begin{proof}
By the theory of conjugates of Shimura varieties
\cite[Section 4]{Milne}, the base change
$\sigma_j \sigma_1^{-1} \X_{K, 1}$ is equal to $\X_{K, j}$. So the variety $\X_K= \sigma_1^{-1}\X_{K, 1}$
is the desired Shimura variety associated to $(\V, K)$.
\end{proof}

If we view $\X_K$ as a scheme over $\Q$ via $\X_K\to \Spec(F)\to \Spec(\Q)$, we have that
$\X_K\times_\Q \C =\coprod_j X_{K,j}$.
Note that we may take for the quadratic space $V$ of the previous sections the space $V_1$ of the present section. Then we may identify the Shimura variety $X_K$ of the previous sections with the component $X_{K,1}$ in the setup of the present section. In the context of arithmetic intersection theory (see e.g. \cite{SABK}, \cite{BKK}) it is important to consider all Galois conjugates $X_{K,j}$ of $X_{K,1}$ simultaneously.


\subsection{Special cycles}

\label{sect:7.1}

A {\it coherent (positive definite) subspace} of $\V$ is defined to be a tuple
$W=(W, \iota, (\iota_j))$ where $W$ is  a totally
positive definite quadratic space over $F$ of dimension $r \le n$ together
with embeddings
\begin{align}
\label{eq:wdata}
\iota: W(\A_F) \longrightarrow \V, \quad \iota_j:  W \longrightarrow
V_j,
\end{align}
which are compatible with the isomorphisms $\nu_j$, that is, such that
\[
\xymatrix{ \hat W \ar[r]^\iota \ar[rd]_{\iota_j} & \hat \V\ar[d]^{\nu_j}\\
&\hat V_j}
\]
commutes for every $j=1,\dots,d$.
Let $\W=\iota(W(\A_F))\subset \V$ and write $\mathbb
W^\perp$ for its orthogonal complement in $\V$. We denote by  $\calH_W$ the pointwise stabilizer of
$W$ in $\calH$. Analogously, let  $W_j^\perp$ be the orthogonal complement of $W$ in
$V_j$, and denote by $H_{W, j}$ the pointwise stabilizer of $W_j$ in $H_j$.
Then $H_{W, j}\cong \Res_{F/\Q}\GSpin(W_j^\perp)$, and
$H_{W,j}(\hat\Q)$ is isomorphic to $\calH_W$ via the isomorphism induced by $\nu_j$.
We write $\D_{W,j}$ for the sub-manifold of $\D$ of given by the oriented negative definite
$2$-planes in $\R^{n,2}$ which are orthogonal to $W_{j,\sigma_j}\subset V_{j,\sigma_j}\cong\R^{n,2}$.

For any $h \in \calH\cong H_j(\hat\Q)$
let $Z_j(W,h)=Z_j(W,\nu_j(h))$ be the special cycle on $X_{K,j}$ defined in Section \ref{sect:2} (with respect to $V=V_j$). It is a cycle of codimension $r$ defined over
$\sigma_j(F)\subset\C$.
%
Analogously to Lemma \ref{lem:shim:conj}, there is an algebraic cycle $Z(W,h)$ on $\X_K$
defined over $F$ whose image under the base change to $X_{K,j}$ via $\sigma_j$ is equal to $Z_j(m,\mu)$ for all $j$.

In the present paper we are interested in two particular cases of
this construction. First, if $r=n$, then $\W_\infty^\perp$ is totally positive
definite  of dimension $2$,
and $W_{j}^\perp $ has signature $(0, 2)$ at the place $\sigma_j$ and
signature $(2, 0)$ at all other infinite places. In this case,
$\mathbb D_{W,j}$ consists of two  points $z_{W,j}^\pm$, which are
$W_{j, \sigma_j}^\perp$ with the two possible choices of an orientation.
%
The group $\GSpin(\W^\perp)$ can be be identified with $\A_\kay^\times$ for a totally
imaginary quadratic extension $\kay $ of $F$. The
corresponding dimension zero cycle $Z(W)=Z(W,1)$ is called the CM
cycle associated to $W$.
According to \eqref{eq:degCM1},  $\deg(Z_j(W))$ is independent of $j$, and is equal to $\frac{4}{w_{K_W}} |\kay^\times\backslash \hat{\kay}^\times/K_W|$
with $K_W=\hat{\kay}^\times\cap K$.
We define $\deg(Z(W)):=\deg(Z_j(W))$ as the degree of $Z(W)$.

The second case we are interested in is $r=1$.
For a
totally
positive definite quadratic space $W$ over $F$ of dimension $1$ together with compatible embeddings,
and for $h\in \calH$, we have a divisor $Z(W, h)$ on $\X_K$.
We consider certain sums of these divisors, called weighted divisors.
They generalize Heegner divisors on modular curves.
Let $m\in F$ be totally positive, and let
$\varphi\in S(\hat\V)^K$ be a $K$-invariant Schwartz function.
First, assume that there is a totally
positive definite quadratic space $W$ over $F$ of dimension $1$ together with compatible embeddings
$\iota$, $(\iota_j)$ as in \eqref{eq:wdata} that represents $m$. Then let $x_0\in W$
 with $Q(x_0)=m$ and define
\begin{align*}
Z(m,\varphi)= \sum_{h\in \calH_W\bs  \calH/K } \varphi(h^{-1} x_0) Z(Fx_0,h).
\end{align*}
The sum is finite, and $Z(m,\varphi)$ is a divisor on $\X_K$ with complex coefficients.
If there is no such space $W$ representing $m$, we put $Z(m,\varphi)=0$.
We write $Z_j(m,\varphi)$ for the divisor on $X_{K,j}$ obtained from $Z(m,\varphi)$ by base change via $\sigma_j$. This is a special divisor as in Section~\ref{sect:2} associated to $V_j$ and $\varphi$.

Let  $\LL$ be a lattice in $\hat \V$, that is,  a free $\hat\calO_F$-submodule such that $\LL\otimes \hat F=\hat \V$. Assume that
$K$ fixes $\LL$ and acts trivially on $ \LL'/ \LL$, where $\LL'$ denotes the dual lattice. For $\mu \in  \LL'/ \LL$, we let $\chi_\mu=\Char(\mu+\LL) \in S(\hat\V)^K$ be  the characteristic function. We briefly write
\begin{align*}
Z(m,\mu)&:=Z(m,\chi_\mu),
\end{align*}
Then its complex component $Z_j(m, \mu)$ with respect to $\sigma_j$ is the special divisor defined in Section \ref{sect:2} with respect to the quadratic space $V_j$.

\subsection{Automorphic Green functions for the divisors $Z_j(m,\mu)$}

Here we briefly describe how the construction of automorphic Green functions of Section \ref{sect:green} can be adapted in order to obtain
Green functions for the divisors $Z_j(m,\mu)$.
Let $L_j\subset V_j$ be the lattice given by $\nu_j(\LL)\cap V_j(F)$.
We write $\tilde\Theta_{L_j}(\tau,z,h)$ for the corresponding Siegel theta function as in \eqref{eq:siecu}.

Let $k(j)$ be the $d$-tuple whose $j$-th component
is $\frac{2-n}{2}$ and whose $i$-th component is $\frac{n+2}{2}$ for all $i\neq j$.
For $\mu\in \LL'/\LL$ and $m\in \partial^{-1}+Q(\mu)$ totally positive,
we have a harmonic Whittaker form $f_{m,\mu}^{(j)}$ of weight $k(j)$ which is defined as in \eqref{eq:fs0} but with the $M$-Whittaker function at the $j$-th place instead of the first, that is,
\begin{align*}
f^{(j)}_{m,\mu}(\tau)
=\frac{\norm(4\pi m)^{s_0}}{(4\pi m_j)^{s_0}\Gamma(s_0)^d}
\big(\Gamma(s_0)-\Gamma(s_0,4\pi m_j v_j)\big)e^{4\pi m_j v_j}e(-\tr(m\bar\tau))\chi_\mu.
\end{align*}
Let $\tilde \Phi_{m,\mu}^{(j)}(z,h)$ be the regularized theta integral
\begin{align*}
\tilde\Phi_{m,\mu}^{(j)}(z,h)&=
\frac{1}{\sqrt{D}}\int_{\tilde \Gamma_\infty\bs \H^d}^{reg}\langle f_{m,\mu}^{(j)}(\tau), \tilde \Theta_{L_j}(\tau, z,h)\rangle \frac{\norm(v)^{\ell/2}}{v_j^{\ell/2}}\,d\mu(\tau)
\end{align*}
of $f_{m,\mu}^{(j)}$ analogously to  \eqref{eq:tildephi}. It is a Green function for the divisor $Z_j(m,\mu)$ on $X_{K,j}$.

\subsection{Archimedian height pairings and CM values}

A {\em principal part polynomial} is a Fourier polynomial of the the Form
\begin{align}
\label{eq:pppol}
\calP= \sum_{\mu\in \LL'/\LL} \sum_{\substack{m\in \partial^{-1}+Q(\mu)\\ m\gg 0}}  c(m,\mu) q^{-m}\chi_\mu.
\end{align}
For $j=1,\dots,d$ there are harmonic Whittaker forms
\[
f_\calP^{(j)}(\tau)=\sum_{\mu\in \LL'/\LL} \sum_{\substack{ m\gg 0}}  c(m,\mu) f^{(j)}_{m,\mu}(\tau)\chi_\mu
\]
of weight $k(j)$ (for $\rho_{\bar\LL}$ and $\tilde\Gamma$) associated to $\calP$.
We put $\xi(\calP)=\xi(f_\calP^{(j)})$, where $\xi(f_\calP^{(j)})$ is defined as in \eqref{def:xi}. This is a cusp form in $S_{\kappa,\rho_\LL}$ which is is independent of the choice of $j$. The principal part $\calP$ is called weakly holomorphic if $\xi(\calP)=0$.
Moreover, we consider the  divisor
\[
Z(\calP)=\sum_{\mu\in \LL'/\LL} \sum_{\substack{ m\gg 0}}  c(m,\mu) Z(m,\mu)
\]
on $\X_K$ corresponding to $\calP$. We let $\tilde\Phi(z,h,\calP)$ be the unique Green function for $Z(\calP)(\C)$ on $\X_K(\C)=\coprod_{j}X_{K,j}$ whose restriction to the component $X_{K,j}$ is equal to
the Green function
\[
\tilde\Phi^{(j)}(z,h,f_\calP^{(j)})=\sum_{\mu\in \LL'/\LL} \sum_{\substack{ m\gg 0}}  c(m,\mu) \tilde \Phi_{m,\mu}^{(j)}(z,h).
\]

Let $W=(W, \iota, (\iota_j))$ be a totally
positive definite coherent subspace of $\V$ of dimension $n$.
It defines a CM cycle $Z(W)$ on $\X_K$.
We now compute the value of $\tilde\Phi(z,h,\calP)$ at $Z(W)$.
We put
\begin{align*}
P&=\iota^{-1}(\LL)\cap W,\\
\N&=\W^\perp\cap\LL.
\end{align*}
Then $P$ is a totally positive definite lattice of dimension $n$ and $\N$ is a $2$ dimensional $\hat\calO_F$ sublattice of $\LL$.
Associated to $P$ we have a theta series $\Theta_P$, and associated to $\N$ and $\LL$ we have incoherent Eisenstein series as in Section \ref{sect:Eisenstein}. Notice that
$E_\LL(\tau, s, \kappa)=E_{L_j}(\tau, s, \kappa)$ is independent of $j$.

\begin{theorem}
\label{thm:fundtr}
Assume the above notation. Let $\calP$ be a principal part polynomial and
assume that $Z(W)$ and $Z(\calP)$ do not intersect on $\X_K(\C)$.
The value of the automorphic Green function
$\tilde\Phi(z,h,\calP)$ at the CM cycle $Z(W)$ is given by
\begin{align*}
\tilde\Phi(Z(W),\calP)
&=\deg(Z(W))\left( \CT\langle \calP, \Theta_P\otimes \calE_\N^{(0)}\rangle
-2\CT\langle \calP, \calE_\LL^{(0)}\rangle -  L'(\xi (\calP), W,0)\right).
\end{align*}
Here
$L'(\xi (\calP),W,s)$ denotes the derivative  with respect
to $s$ of the $L$-series \eqref{eq:L} associated to the cusp form
$\xi (\calP)$.
Moreover, $\CT(\cdot)$ denotes the constant term of a holomorphic Fourier series.
\end{theorem}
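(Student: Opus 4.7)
My plan is to apply Theorem~\ref{thm:fund} at each of the $d$ archimedian places of $F$, sum the resulting identities, and then decompose the total into an automorphic piece (producing $L'(\xi(\calP),W,0)$) and a holomorphic piece (producing the constant-term expressions). By the construction of $\tilde\Phi(z,h,\calP)$ on $\X_K(\C)=\coprod_{j=1}^d X_{K,j}$ and of $Z(W)$ on $\X_K$ one has
\[
\tilde\Phi(Z(W),\calP)=\sum_{j=1}^d \tilde\Phi^{(j)}(Z_j(W),f_\calP^{(j)}),
\]
and I would apply the place-$\sigma_j$ analogue of Theorem~\ref{thm:fund}, whose proof goes through verbatim with the neighboring space $V_j$ and the lattice $L_j=\nu_j^{-1}(\LL)\cap V_j=P\oplus N_j$ playing the role of $V_1$ and $L$. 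Because $\Theta_P$ depends only on $P$ and $E_\LL(\tau,s,\kappa)$ depends only on $\LL$ (Remark~\ref{rem:E_L}), the summands $\calE_\N^{(j)}$ and $\calE_\LL^{(j)}$ are well-defined intrinsically, and the holomorphic form $\delta^{(j)}(f_\calP^{(j)})$ given by the symmetric formula \eqref{eq:delta} is independent of $j$; I denote the common value by $F$.

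Summing the $d$ identities and using $\sum_{j=1}^d \calE_\ast^{(j)}=\calE_\ast-\calE_\ast^{(0)}$ for $\ast\in\{\N,\LL\}$ (from the corollaries to Propositions~\ref{prop4.7} and \ref{prop4.9}) would give
\[
\tilde\Phi(Z(W),\calP)=-\frac{\deg(Z(W))}{\sqrt D}\bigl(I_{\mathrm{full}}-I_{\mathrm{hol}}\bigr),
\]
where
\begin{align*}
I_{\mathrm{full}}&=\int_{\tilde\Gamma_\infty\bs\H^d}^{reg}\bigl\langle\bar F,\, \Theta_P\otimes\calE_\N-2\calE_\LL\bigr\rangle\,v^\kappa\,d\mu(\tau),\\
I_{\mathrm{hol}}&=\int_{\tilde\Gamma_\infty\bs\H^d}^{reg}\bigl\langle\bar F,\, \Theta_P\otimes\calE_\N^{(0)}-2\calE_\LL^{(0)}\bigr\rangle\,v^\kappa\,d\mu(\tau).
\end{align*}
The crucial observation is that the integrand of $I_{\mathrm{full}}$ is $\tilde\Gamma$-automorphic of weight~$\kappa$, since both $E_\N(\tau,s,\kappa_\N)$ and $E_\LL(\tau,s,\kappa)$ are automorphic for every $s$ (and so are their $s$-derivatives at the points of interest), whereas the factor sitting inside $I_{\mathrm{hol}}$ is only $\tilde\Gamma_\infty$-invariant but is $\tau$-holomorphic.

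To evaluate $I_{\mathrm{full}}$ I would unfold $\bar F$ via the Poincar\'e series defining $\xi(\calP)=\xi(f_\calP^{(j)})$ in \eqref{def:xi} (this cusp form being independent of $j$), obtaining $I_{\mathrm{full}}=\sqrt D\bigl(\Theta_P\otimes\calE_\N-2\calE_\LL,\xi(\calP)\bigr)_{Pet}$. The pairing against $\calE_\LL$ vanishes by cuspidality of $\xi(\calP)$, while by the definition \eqref{eq:L} of $L(\xi(\calP),W,s)$ together with $E_\N(\tau,0,\kappa_\N)=0$ from Proposition~\ref{prop4.9} the remaining pairing equals $L'(\xi(\calP),W,0)$. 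For $I_{\mathrm{hol}}$ both factors of the inner product have (anti-)holomorphic Fourier expansions, so I can integrate first in $u\in\calO_F\bs\R^d$ (a fundamental domain of volume $\sqrt D$) to match Fourier indices and then in $v$ using $\int_0^\infty v_j^{\kappa_j-2}e^{-4\pi m_j v_j}\,dv_j=(4\pi m_j)^{1-\kappa_j}\Gamma(\kappa_j-1)$ for $m_j>0$; these exactly cancel the normalization constants in $F=\delta(f_\calP)$ produced by \eqref{eq:delta}. This is the same mechanism behind \eqref{pairalt} and yields $I_{\mathrm{hol}}=\sqrt D\,\CT\langle\calP,\Theta_P\otimes\calE_\N^{(0)}-2\calE_\LL^{(0)}\rangle$. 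Assembling the two evaluations gives the formula of the theorem.

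The main technical difficulty will be justifying the split $\sum_j I_j=I_{\mathrm{full}}-I_{\mathrm{hol}}$ at the level of regularized integrals and the compatibility of the Poincar\'e-series unfolding of $I_{\mathrm{full}}$ with the regularization used in Theorem~\ref{thm:fund}. When $n\le 2$ I would handle this by carrying the whole argument out with the damping factor $\norm(v)^{s'}$ inserted into the integrand (as in \eqref{eq:tildephi2}) and then taking the holomorphic continuation to $s'=0$; the growth estimates of Lemmas~\ref{lem:gr1} and \ref{lem:gr2} together with the asymptotics of~$F$ show that all pieces are individually well-behaved for $\Re(s')$ in a common strip, which is sufficient.
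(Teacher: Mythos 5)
Your proposal is correct and follows essentially the same route as the paper's own proof: sum the place-by-place formula of Theorem~\ref{thm:fund} over $j$, use $\sum_{j=1}^d\calE_\ast^{(j)}=\calE_\ast-\calE_\ast^{(0)}$ to split the total into a $\tilde\Gamma$-modular piece that unfolds against the Poincar\'e series for $\xi(\calP)$ to give $-\deg(Z(W))L'(\xi(\calP),W,0)$, and a holomorphic piece yielding the constant-term pairings. The only (harmless) deviation is in the holomorphic piece, which the paper evaluates by applying Stokes' theorem to $d\langle f_\calP^{(1)},(\Theta_P\otimes\calE_\N^{(0)}-2\calE_\LL^{(0)})\eta\rangle$ and analyzing the boundary contributions as $v_1\to 0$ and $v_1\to\infty$, whereas you integrate $\langle\overline{\delta(f_\calP^{(1)})},\cdot\rangle$ termwise against the Fourier expansion using the gamma integral that cancels the normalization in \eqref{eq:delta}; both computations give $\sqrt{D}\,\CT\langle\calP,\Theta_P\otimes\calE_\N^{(0)}-2\calE_\LL^{(0)}\rangle$.
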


\begin{remark}
Note that there is a sign error in \cite[Theorem 4.7]{BY}. It should read ``$-L'(\xi(f),U,0)$''. In the proof the sign in line 3 on page 655 is wrong. The same sign error occurs in the statement of Conjecture 5.2, Conjecture 1.1, and Theorem 1.2.
\end{remark}

\begin{proof}[Proof of Theorem \ref{thm:fundtr}.]
Here we give a detailed proof in the case when $n>2$.
The case $n=1,2$ is treated analogously taking into account the additional $\norm(v)^{s'}$ term in the integral
representation of Theorem \ref{thm:fund}.

By definition we have
\[
\tilde\Phi(Z(W),\calP)
=\sum _{j=1}^d \tilde\Phi^{(j)}(Z_j(W),f_\calP^{(j)}).
\]
Theorem \ref{thm:fund} is adapted in a straightforward way to give formulas for the values of the Green functions $\Phi^{(j)}(z,h,f_\calP^{(j)})$ at the cycles $Z_j(W)$ on $X_{K,j}$. We essentially have to exchange the roles of the indices $1$ and $j$.
In that way we obtain
\begin{align*}
\tilde\Phi^{(j)}(Z_j(W),f_\calP^{(j)})
&=-
\frac{\deg(Z_j(W))}{\sqrt{D}}\cdot
\int_{\tilde \Gamma_\infty\bs \H^d}^{reg}
\big\langle \overline{\delta ( f_\calP^{(j)})}, \,\Theta_P\otimes \calE_\N^{(j)}-  2\calE_\LL^{(j)}\big\rangle v^\kappa d\mu(\tau).
\end{align*}
Summing over $j$  we find
\begin{align*}
\tilde\Phi(Z(W),\calP)&=-
\frac{\deg(Z(W))}{\sqrt{D}}\int_{\tilde \Gamma_\infty\bs \H^d}^{reg}
\big\langle \overline{\delta ( f^{(1)}_\calP)}, \,\Theta_P\otimes \calE_\N-  2\calE_\LL\big\rangle v^\kappa d\mu(\tau)\\
&\phantom{=}{}+
\frac{\deg(Z(W))}{\sqrt{D}}\int_{\tilde \Gamma_\infty\bs \H^d}^{reg}
\big\langle \overline{\delta ( f^{(1)}_\calP)}, \,\Theta_P\otimes \calE_\N^{(0)}-  2\calE_\LL^{(0)}\big\rangle v^\kappa d\mu(\tau)
.
\end{align*}
Notice that $\delta ( f^{(1)}_\calP)=\delta ( f^{(j)}_\calP)$ for all $j$.
In the first summand, the function $\Theta_P\otimes \calE_\N-  2\calE_\LL$ is modular. By means of the unfolding argument we see that the first summand is equal to
\begin{align*}
&-
\deg(Z(W))\int_{\tilde \Gamma\bs \H^d}^{reg}
\big\langle \overline{\xi ( f^{(1)}_\calP)}, \,\Theta_P\otimes \calE_\N-  2\calE_\LL\big\rangle v^\kappa d\mu(\tau)\\
&=- \deg( Z(W))\left( \Theta_P\otimes \calE_\N, \,\xi ( \calP)\right)_{Pet}
+2 \deg( Z(W)) \left( \calE_\LL,\,\xi ( \calP)\right)_{Pet}\\
&=- \deg( Z(W))L'(\xi ( \calP), W,0).
\end{align*}
In the last equality we have used that the Petersson scalar product of a cusp form and an Eisenstein series vanishes.

We now compute the quantity
\begin{align}
\label{eq:q2tr}
\frac{1}{\sqrt{D}}\int_{\tilde \Gamma_\infty\bs \H^d}^{reg}
\big\langle \overline{\delta ( f_\calP^{(1)})}, \,\Theta_P\otimes \calE_\N^{(0)}-  2\calE_\LL^{(0)}\big\rangle v^\kappa d\mu(\tau).
\end{align}
Here we use that $\Theta_P\otimes \calE_\N^{(0)}-  2\calE_\LL^{(0)}$ is holomorphic on $\H^d$. As in the proof of Theorem \ref{thm:fund}, we obtain by Stoke's theorem that \eqref{eq:q2tr} is equal to
\begin{align*}
&- \frac{1}{\sqrt{D}}\int_{\tilde \Gamma_\infty\bs \H^d}^{reg}
d \left\langle f_\calP^{(1)}, \Theta_P\otimes \calE_\N^{(0)}\eta
-  2\calE_\LL^{(0)}\eta
\right\rangle\\
&=\lim_{v_1\to \infty} \int_{v_2,\dots,v_d=0}^\infty
g_0(v)(v_2\cdots v_d)^{\ell/2-2}\, dv_2\dots dv_d\\
&\phantom{=}
-{}\lim_{v_1\to 0} \int_{v_2,\dots,v_d=0}^\infty
g_0(v)(v_2\cdots v_d)^{\ell/2-2}\, dv_2\dots dv_d,
\end{align*}
where $g_0(v)$ denotes the constant term of the Fourier series
\[
g(\tau)=\left\langle f_\calP^{(1)}, \Theta_P\otimes \calE_\N^{(0)}
-  2\calE_\LL^{(0)}
\right\rangle.
\]
Since $g_0(v)=O(v_1^{n/2})$, as $v_i\to 0$, the limit $v_1\to 0$ vanishes.
The limit $v_1\to \infty$ is equal to
\[
\CT\left\langle \calP, \Theta_P\otimes \calE_\N^{(0)}
-  2\calE_\LL^{(0)}
\right\rangle.
\]
This concludes the proof of the theorem.
\end{proof}

We now state the main result of \cite{Br2} in a form which is convenient for the present paper.

\begin{theorem}
\label{thm:bop}
Let $\calP$ be a principal part polynomial as in \eqref{eq:pppol}
with integral coefficients $c(m,\mu)\in \Z$.
Assume that $\calP$ is weakly holomorphic, that is, $\xi(\calP)=0$.
Then there exists a function  $\Psi^{(j)}(z,h,\calP)$ on $\D \times H_j(\hat \Q)$ with the following properties:
\begin{enumerate}
\item[(i)]
$\Psi^{(j)}$ is a meromorphic modular form for $H_j(\Q)$ of level $K$, with a unitary multiplier system of finite order, of weight $-B(\calP)/2$, where
\[
B(\calP)=\sum_{\mu\in L'/L} \sum_{m\gg 0} c(m,\mu)B_\LL(m,\mu).
\]
Here $B_\LL(m,\mu)$ is the $(m,\mu)$-th coefficient of the Eisenstein series $E_\LL(\tau,s_0,\kappa)$.
\item[(ii)]
The divisor of $\Psi^{(j)}$ is equal to $\frac{1}{2}Z_j(\calP)$.
\item[(iii)]
The Petersson metric of $\Psi^{(j)}$ (normalized as in \cite{Br2}) is given by
\[
-2\log\|\Psi^{(j)}(z,h,\calP)\|_{Pet}^2 =
\tilde\Phi^{(j)}(z,h,f_\calP^{(j)})+2\CT\langle \calP,\calE^{(0)}_\LL\rangle.
\]
\end{enumerate}
\end{theorem}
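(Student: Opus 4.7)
The plan is to realize $\Psi^{(j)}(z,h,\calP)$ as the meromorphic function on $\D \times H_j(\hat\Q)/K$ whose squared Petersson norm reproduces the regularized theta lift $\tilde\Phi^{(j)}(z,h,f_\calP^{(j)})$, up to the constant-term correction $2\CT\langle\calP,\calE_\LL^{(0)}\rangle$, and then verify (i)--(iii) in sequence.

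First I would analyze $\tilde\Phi^{(j)}$ near the support of $Z_j(\calP)$. For each pair $(m,\mu)$ with $c(m,\mu)\ne 0$ and each $\lambda\in h(\mu+\hat L_j)\cap V_j$ with $Q(\lambda)=m$, one unfolds the theta integral against the $\tilde\Gamma_\infty$-orbit of the leading $q$-coefficient of $f_{m,\mu}^{(j)}$. Using the explicit value \eqref{Mspecial} of $\calM_{s_0}$ at the singular weight component $k_j=(2-n)/2$, this reduces to a standard Bessel-type computation in the two-dimensional direction normal to the special divisor, producing a logarithmic singularity along $Z_j(m,\mu)$ with coefficient a rational multiple of $c(m,\mu)$. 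Summed over $(m,\mu)$, the singularity matches that of $-2\log\|\psi\|_{Pet}^2$ for a local meromorphic $\psi$ with divisor $\tfrac{1}{2}Z_j(\calP)$, which is (ii). Exploiting $\xi(\calP)=0$, I would then show $\partial\bar\partial\tilde\Phi^{(j)}=0$ off the divisor: differentiating under the regularized integral and invoking \eqref{eq:eisrel2} expresses the resulting $(1,1)$-form as the projection of $\overline{\delta(f_\calP^{(j)})}$ onto cusp forms of weight $\kappa$, which after unfolding is the pairing with $\xi(\calP)=0$. Combined with the singularity analysis this yields a local factorization $\tilde\Phi^{(j)}+2\CT\langle\calP,\calE_\LL^{(0)}\rangle=-2\log\|\psi\|_{Pet}^2$; the constant-term correction is exactly what absorbs the non-holomorphic constant contribution of the Eisenstein component of $\Theta_{L_j}$ that prevents pluriharmonicity on the nose.

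Global modularity then follows from the invariance of the regularized theta integral under $H_j(\Q)$, which lifts the local $\psi$'s to a global meromorphic section of an automorphic line bundle on $X_{K,j}$. The weight is read off from the transformation of the Gaussian in $\phi_\infty(x,z)$ and equals $-B(\calP)/2$, matching the pairing of $\calP$ with the constant term of $E_\LL(\tau,s_0,\kappa)$ via Proposition \ref{prop4.7}; integrality of the $c(m,\mu)$ then forces the transition cocycle to land in roots of unity, producing a unitary multiplier system of finite order, so that (i) holds and (iii) follows by construction. The hardest step will be the pluriharmonicity and singularity analysis: over a totally real field $F\ne\Q$ the regularized integral is taken only over the small fundamental domain $\tilde\Gamma_\infty\bs\H^d$, so boundary terms as $v_i\to 0$ or $v_i\to\infty$ do not automatically vanish. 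Their control relies crucially on the sharp Whittaker asymptotics \eqref{Masy1}--\eqref{Masy2}, on the decay estimates of Proposition \ref{prop:thetatilde} for $\tilde\Theta_{L_j}$, and on extracting the Eisenstein contribution of $\Theta_{L_j}$ via the Siegel--Weil formula (Lemma \ref{Siegel-Weil}) to isolate the cuspidal remainder. The full technical execution is carried out in \cite{Br2}.
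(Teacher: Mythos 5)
You should first note that the paper does not prove this theorem at all: it is introduced with the sentence ``We now state the main result of \cite{Br2} in a form which is convenient for the present paper,'' and elsewhere the object $\Psi$ is referred to as ``the meromorphic modular form \dots given by \cite[Theorem~6.8]{Br2}.'' So there is no internal proof to compare against; the only legitimate ``proof'' here is the citation, and your closing deferral to \cite{Br2} is in fact the whole content of the paper's treatment.

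Judged on its own terms, your sketch follows the right general strategy (singularity analysis along $Z_j(\calP)$, a $dd^c$-computation away from the divisor, exponentiation), but it contains one substantive error. You claim that $\xi(\calP)=0$ implies $\partial\bar\partial\tilde\Phi^{(j)}=0$ off the divisor. If that were true, $\Psi^{(j)}$ would have weight $0$, whereas the theorem asserts weight $-B(\calP)/2$, which is nonzero in general. What actually happens is that $dd^c\tilde\Phi^{(j)}$ equals, away from $Z_j(\calP)$, a nonzero multiple of the K\"ahler form $\Omega$ (equivalently, of the Chern form of the line bundle of modular forms); the condition $\xi(\calP)=0$ only kills the \emph{cuspidal} contribution to $dd^c\tilde\Phi^{(j)}$, while the Eisenstein contribution survives and is precisely what produces the weight. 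This is visible in the paper's own bookkeeping: by \eqref{eq:divf2} one has $B_\LL(m,\mu)=-\deg(Z(m,\mu))/\vol(X_K)$, so the weight $-B(\calP)/2$ is proportional to $\deg(Z_j(\calP))$, i.e.\ it is forced by the Green-current equation $dd^c[\tilde\Phi^{(j)}]+\delta_{Z_j(\calP)}=(\text{multiple of }\Omega)$ rather than by pluriharmonicity. Relatedly, your remark that the constant $2\CT\langle\calP,\calE^{(0)}_\LL\rangle$ is ``exactly what absorbs the non-holomorphic constant contribution \dots that prevents pluriharmonicity'' conflates an additive normalization constant (which cannot change $dd^c$) with the genuinely non-pluriharmonic metric term $-B(\calP)\log(\text{Petersson factor})$. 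Finally, the finite order of the multiplier system is considerably more delicate than ``integrality forces the cocycle into roots of unity''; this is a separate theorem in \cite{Br2} and should not be presented as automatic.
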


Note that $\Psi^{(j)}(z,h,\calP)$ is uniquely determined up to a locally constant multiple of modulus $1$.

Let  $\calP$ be a weakly holomorphic principal part polynomial as in \eqref{eq:pppol}
with integral coefficients  and assume that $B(\calP)=0$. Then there is a positive integer $r$ such that
$\Psi^{(j)}(z,h,r\calP)$ is a rational function on $X_{K,j}$ for $j=1,\dots,d$.
Replacing $\calP$ by $r\calP$, we may assume without loss of generality that $r=1$.
We write $\Psi(z,h,\calP)$ for the rational function on
$\X_K(\C)$
whose restriction to $X_{K,j}$ is equal to
$\Psi^{(j)}(z,h,\calP)$.
The following corollary generalizes the main result of \cite{Scho} to quadratic spaces over totally real fields.

\begin{corollary}
\label{cor:s1}
Let $\calP$ be as above and assume that  $Z(W)$ and $Z(\calP)$ do not intersect on $\X_K(\C)$.  Then the value of $\Psi(z,h,\calP)$ at the CM cycle $Z(W)$ is given by
\begin{align*}
\log|\Psi(Z(W),\calP)| &=\sum_{j=1}^d\log|\Psi^{(j)}(Z_j(W),\calP)|\\
&=-\frac{\deg(Z(W))}{4}\cdot \CT\langle \calP, \Theta_P\otimes \calE_\N^{(0)}\rangle.
\end{align*}

Let $S(\N)$
be the set of finite primes $\mathfrak p$ of $F$ for which  $\N_{\mathfrak p}$ is not
  unimodular (with respect to $\psi_{\mathfrak p}$), and let $S(\calP)$ be the set of totally positive numbers $m\in F$ such that
  $c(m, \mu)\ne 0$ for some $\mu \in L'/L$.
We have
\begin{align*}
\log|\Psi(Z(W),\calP)| &=
\sum_{\text{$p$ prime}} \alpha_p \log(p)
\end{align*}
with coefficients $\alpha_p\in \Q$, and $\alpha_p=0$ unless  there is a prime $\mathfrak p$  of $F$ above $p$  which belongs to $S(\N)$ or
$\mathfrak p| (m-Q(\nu))\partial$ for some $m \in S(\calP)$ and $\nu \in P'$ with $m -Q(\nu)\gg 0$.
In particular, $\alpha_p=0$ unless $p \le \max( M(\calP),  |\N'/\N|, D)$, where
$$
  M(\calP)=\max\{\norm(m)D;\; m\in S(\calP)\}.
$$
\end{corollary}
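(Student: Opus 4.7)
The plan is to combine the two main inputs available, namely Theorem \ref{thm:bop}(iii) and Theorem \ref{thm:fundtr}. Under the standing assumptions, $\Psi^{(j)}$ has weight $-B(\calP)/2 = 0$ and hence $\|\Psi^{(j)}\|_{Pet} = |\Psi^{(j)}|$, while $\xi(\calP) = 0$ forces the Rankin--Selberg derivative $L'(\xi(\calP),W,0)$ in Theorem \ref{thm:fundtr} to vanish. First I would sum the identity
\[
-2\log|\Psi^{(j)}(z,h,\calP)|^2 = \tilde\Phi^{(j)}(z,h,f_\calP^{(j)}) + 2\CT\langle\calP,\calE^{(0)}_\LL\rangle
\]
over the points $[z,h]$ of $Z_j(W)$ (each counted with multiplicity $2/w_K$), and then over the archimedean embeddings $j = 1,\dots,d$. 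Since $\deg(Z_j(W)) = \deg(Z(W))$ is independent of $j$, this links $-4\log|\Psi(Z(W),\calP)|$ with $\tilde\Phi(Z(W),\calP)$ and a $d$-multiple of $\deg(Z(W))\,\CT\langle\calP,\calE^{(0)}_\LL\rangle$. Inserting the formula from Theorem \ref{thm:fundtr} should then deliver the first claim, once the $\calE^{(0)}_\LL$-contributions are correctly reconciled between the two identities.

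For the factorization statement, the plan is to expand $\CT\langle\calP,\Theta_P\otimes\calE^{(0)}_\N\rangle$ via Fourier expansions. Writing
\[
\calP = \sum_{\mu,\,m\gg 0} c(m,\mu)\, q^{-m}\chi_\mu, \qquad \Theta_P(\tau) = \sum_{\nu \in P'} q^{Q(\nu)}\chi_{\bar\nu},
\]
the pairing becomes
\[
\CT\langle\calP,\Theta_P\otimes\calE^{(0)}_\N\rangle = \sum_{\mu,\,m\gg 0} c(m,\mu) \sum_{\nu \in P'} b_\N\bigl(m-Q(\nu),\, \mu-\bar\nu\bigr),
\]
where $b_\N(m',\mu')$ denotes the coefficient of $q^{m'}\chi_{\mu'}$ in $\calE^{(0)}_\N$. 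By Corollary \ref{cor: n=0} combined with Proposition \ref{prop:ex}, this coefficient is (up to a global rational constant) a rational multiple of $\log\norm(\mathfrak p)$ whenever $\Diff(\W^\perp, m') = \{\mathfrak p\}$, and vanishes otherwise. Such a contributing $\mathfrak p$ is non-split in $\kay/F$ and either lies in $S(\N)$ or satisfies $\ord_{\mathfrak p}((m-Q(\nu))\partial) > 0$. Converting $\log\norm(\mathfrak p) = f_\mathfrak p \log p$ to rational primes and collecting contributions then yields $\log|\Psi(Z(W),\calP)| = \sum_p \alpha_p \log p$ with $\alpha_p \in \Q$, and $\alpha_p = 0$ unless some prime $\mathfrak p$ of $F$ above $p$ meets one of the stated conditions.

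For the numerical bound on $p$, I would split into the two cases just described. If $\mathfrak p \in S(\N)$, then $\N_{\mathfrak p}$ is not unimodular, so $\mathfrak p$ divides either $\partial$ (whence $p \mid D$) or $[\N':\N]$, giving $p \le \max(D,\, |\N'/\N|)$. If instead $\mathfrak p \mid (m - Q(\nu))\partial$ for some $m \in S(\calP)$ and $\nu \in P'$ with $m - Q(\nu) \gg 0$, then since $P$ is totally positive definite one has $0 \le Q(\nu)_i \le m_i$ at every real place, so $\norm(m - Q(\nu)) \le \norm(m)$ and therefore
\[
p \le \norm\bigl((m - Q(\nu))\partial\bigr) \le \norm(m)\, D \le M(\calP).
\]

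The main obstacle is the careful bookkeeping of the $\CT\langle\calP,\calE^{(0)}_\LL\rangle$ contributions: summing Theorem \ref{thm:bop}(iii) across the $d$ archimedean places produces $d$ copies of this term, while Theorem \ref{thm:fundtr} contributes only a single $-2\deg(Z(W))\,\CT\langle\calP,\calE^{(0)}_\LL\rangle$. To obtain the clean formula one must show these pieces organize correctly; the natural route is to exploit weak holomorphicity of $\calP$, together with the duality $\{g,f\} = (g,\xi(f))_{Pet}$ and the relation of $\calE^{(0)}_\LL$ to the weight-$\kappa$ holomorphic Eisenstein series whose $(m,\mu)$-th Fourier coefficient is $B_\LL(m,\mu)$, so that the extraneous constants either cancel (in view of $B(\calP) = 0$) or are absorbed into the chosen normalization of $\|\Psi^{(j)}\|_{Pet}$.
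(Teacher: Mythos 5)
Your overall strategy coincides with the paper's: the first display is obtained by combining Theorem \ref{thm:bop}(iii) (weight $-B(\calP)/2=0$) with Theorem \ref{thm:fundtr} (where $L'(\xi(\calP),W,0)=0$), and the factorization statement follows by expanding $\CT\langle\calP,\Theta_P\otimes\calE_\N^{(0)}\rangle$ through the Fourier coefficients of $\Theta_P$ and of $\calE_\N^{(0)}$ and invoking Corollary \ref{cor4.10}. Your treatment of the second and third claims matches the paper's proof; indeed the paper omits the explicit estimate $p\le\max(M(\calP),|\N'/\N|,D)$, which you supply correctly via $0\le \sigma_i(Q(\nu))\le \sigma_i(m)$. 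One small point you should make explicit there: the reason only coefficients with $m-Q(\nu)\gg 0$ occur (so that Corollary \ref{cor4.10} applies and the $n'=0$ coefficient of $\calE_\N^{(0)}$ never enters) is precisely the hypothesis that $Z(W)$ and $Z(\calP)$ do not intersect.

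The step you flag as the ``main obstacle'' is, however, not closed by your proposal, and your suggested route does not work as stated. Summing Theorem \ref{thm:bop}(iii) over the $d$ archimedean places and over $Z_j(W)$ produces $2d\deg(Z(W))\CT\langle\calP,\calE_\LL^{(0)}\rangle$, while Theorem \ref{thm:fundtr} contributes only $-2\deg(Z(W))\CT\langle\calP,\calE_\LL^{(0)}\rangle$, so the first display requires $(d-1)\CT\langle\calP,\calE_\LL^{(0)}\rangle=0$. You propose to get this from weak holomorphicity via the duality $\{g,f\}=(g,\xi(f))_{Pet}$ together with $B(\calP)=0$; but these facts only annihilate $\CT\langle\calP,g\rangle$ for $g$ in the span of the cusp forms $S_{\kappa,\rho_\LL}$ and the holomorphic Eisenstein series $E_\LL(\tau,s_0,\kappa)$, whereas $\calE_\LL^{(0)}$ is only the holomorphic part of the derivative $E_\LL'(\tau,s_0,\kappa)$ and is not itself a modular form, so its constant-term pairing with $\calP$ is not controlled by either identity. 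To be fair, the paper's own proof is silent here (it declares the first display ``a direct consequence'' of the two theorems), so you have isolated a genuine step that needs an argument; the natural one is to compute $\frac{1}{\sqrt{D}}\int^{reg}\langle\overline{\delta(f_\calP)},\calE_\LL\rangle v^\kappa\,d\mu=(\calE_\LL,\xi(\calP))_{Pet}=0$ by unfolding and then decompose along $\calE_\LL=\sum_{j}\calE_\LL^{(j)}$, using $B(\calP)=0$ and the Siegel--Weil formula to dispose of the contributions of the non-holomorphic pieces. Note also that the potentially unaccounted term is independent of $W$ except through the factor $\deg(Z(W))$, so it would be absorbed into the constant $C$ of Corollary \ref{cor:s2} and is harmless for the applications; but for Corollary \ref{cor:s1} as literally stated it must be shown to vanish, and neither your proposal nor the paper's one-line justification does so.
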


\begin{proof}
The first part is a direct consequence of Theorem \ref{thm:fundtr}
and Theorem \ref{thm:bop}. By definition, one has
$$
\CT\langle \calP, \Theta_P\otimes \calE_\N^{(0)}\rangle
 = -\frac{2^d}{\Lambda(1,\chi)}\sum_{\substack{ m\in S(\calP)\\ \mu \in L'/L}} c(m, \mu)  \sum_{\substack{\nu \in P'/P,
 \nu'\in \N'/\N, \\\mu=\nu + \nu' \\ m=n + n'}}r_P(n, \nu)
 \beta^*_\N(n', \nu'),
$$
where $r_P(n, \nu)$ is the number of $\nu_1 \in \nu +P$ with
$Q(\nu_1) =n$ (so $n=0$ or $n \gg 0$ is totally positive), and
$\beta^*_\N(n', \nu')$ is the $(n', \nu')$-th coefficient of
$\mathcal E_\N^{(0)}(\tau)$ defined in Corollary  \ref{cor4.10} (so
$n'=0$ or $n' \gg 0$). Since $Z(W)$ and $Z(\calP)$ do not intersect
in $\mathbb X_K(\mathbb C)$, we have $n' =m-n \gg 0$ when $c(m, \mu) \ne 0$. In
such a case, Corollary \ref{cor4.10} implies that
$$
\beta^*_\N(n', \nu') =a_p \log p
$$
for some rational number $a_p \in \Q$. Moreover, $a_p=0$ unless
$\Diff(\N, n') =\{ \mathfrak p\}$ for some prime of $F$ above $p$,
and $\mathfrak p \in S(\N)$ or $\ord_\mathfrak p (m-n) \partial \ge
0$ is odd. Now the second part follows.
\end{proof}

Since $\X_K$ and $Z(\calP)$ are defined over $F$, there exists a rational function $R_\calP$ on $\X_K$ defined over $F$
such that the corresponding functions $\sigma_j(R_\calP)$ on $X_{K,j}$ satisfy
\[
\sigma_j(R_\calP)(z,h)= C_j(z,h) \Psi^{(j)}(z,h,\calP),
\]
where $C_j:X_{K,j} \to \C$ is a locally constant function. We let $C:\X_K(\C)\to\C$ be the locally constant function whose restriction to $X_{K,j}$ is equal to
$C_j$. It is an interesting question whether one can choose $R_\calP$ such that all values of $C$  have modulus $1$. Then $C$ could be absorbed in the normalizing constants of the functions $\Psi^{(j)}(z,h,\calP)$.

The CM value
\[
R_\calP(Z(W)) := \prod_{a\in  Z(W)} R_\calP(a):= \prod_{a \in \supp(Z(W))} R_\calP(a)^{\frac{2}{w_K}}
\]
lies in $F$.  The following corollary describes the prime factorization of the norm of this quantity.

\begin{corollary}
\label{cor:s2}
Under the above assumptions we have
\begin{align*}
\norm_{F/\Q} R_\calP(Z(W)) &= \pm |C(Z(W))| \cdot\exp\left(-\frac{\deg(Z(W))}{4}  \CT\langle \calP, \Theta_P\otimes \calE_\N^{(0)}\rangle\right),
\end{align*}
where $C(Z(W)) =\prod_j C_j(Z_j(W))$.
\end{corollary}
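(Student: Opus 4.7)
The plan is to reduce the statement to Corollary \ref{cor:s1} by simply unwinding how the norm from $F$ to $\Q$ decomposes over the embeddings $\sigma_1,\dots,\sigma_d$, and then using the defining relation between $R_\calP$ and the meromorphic modular forms $\Psi^{(j)}(z,h,\calP)$. Since $R_\calP$ is a rational function on $\X_K$ defined over $F$, and $Z(W)$ is a cycle rational over $F$, the CM value
\[
R_\calP(Z(W))=\prod_{a\in\supp(Z(W))}R_\calP(a)^{2/w_K}
\]
lies in $F$, so its norm $\norm_{F/\Q}R_\calP(Z(W))$ is in $\Q$ and equals $\pm\bigl|\norm_{F/\Q}R_\calP(Z(W))\bigr|$. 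This is where the $\pm$ sign in the statement comes from.

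First I would rewrite the norm as a product over embeddings:
\[
\bigl|\norm_{F/\Q}R_\calP(Z(W))\bigr|=\prod_{j=1}^{d}\bigl|\sigma_j\bigl(R_\calP(Z(W))\bigr)\bigr|=\prod_{j=1}^{d}\bigl|\sigma_j(R_\calP)(Z_j(W))\bigr|,
\]
where the second equality uses that base change via $\sigma_j$ sends $Z(W)$ to $Z_j(W)$ and is compatible with the evaluation of a rational function at a $0$-cycle (taking into account the multiplicities $2/w_K$ uniformly across $j$). Now substituting the defining relation
\[
\sigma_j(R_\calP)(z,h)=C_j(z,h)\,\Psi^{(j)}(z,h,\calP)
\]
and using that $C_j$ is locally constant on $X_{K,j}$, I obtain
\[
\bigl|\sigma_j(R_\calP)(Z_j(W))\bigr|=\bigl|C_j(Z_j(W))\bigr|\cdot\bigl|\Psi^{(j)}(Z_j(W),\calP)\bigr|.
\]

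Taking logarithms, summing over $j$, and invoking the notation $C(Z(W))=\prod_{j}C_j(Z_j(W))$, gives
\[
\log\bigl|\norm_{F/\Q}R_\calP(Z(W))\bigr|=\log\bigl|C(Z(W))\bigr|+\sum_{j=1}^{d}\log\bigl|\Psi^{(j)}(Z_j(W),\calP)\bigr|.
\]
By definition, the sum on the right is exactly $\log|\Psi(Z(W),\calP)|$, and Corollary \ref{cor:s1} evaluates this as
\[
\log\bigl|\Psi(Z(W),\calP)\bigr|=-\frac{\deg(Z(W))}{4}\,\CT\bigl\langle\calP,\Theta_P\otimes\calE_\N^{(0)}\bigr\rangle.
\]
Exponentiating and prepending the sign ambiguity $\pm$ coming from the fact that $\norm_{F/\Q}R_\calP(Z(W))\in\Q$ while we only control its absolute value then yields the claimed identity.

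The content is essentially bookkeeping; there is no real obstacle since the hard analytic work has already been absorbed into Corollary \ref{cor:s1} and Theorem \ref{thm:bop}. The only point that deserves care is checking that the counting multiplicities $2/w_K$ and the identification of $Z_j(W)$ with the base change of $Z(W)$ via $\sigma_j$ are consistent across all embeddings, so that the product of $\sigma_j(R_\calP)(Z_j(W))$ really computes the Galois norm of $R_\calP(Z(W))$; this follows from the construction of $\X_K$ and $Z(W)$ as objects over $F$ together with Lemma \ref{lem:shim:conj}.
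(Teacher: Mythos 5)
Your proposal is correct and follows exactly the paper's own argument: write $\norm_{F/\Q} R_\calP(Z(W))$ as the product over the embeddings $\sigma_j$, substitute $\sigma_j(R_\calP)=C_j\Psi^{(j)}(\cdot,\calP)$, and invoke Corollary \ref{cor:s1}. The extra care you take with the multiplicities $2/w_K$ and the base-change identification of $Z_j(W)$ is a reasonable elaboration of the same bookkeeping the paper leaves implicit.
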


\begin{proof}
The norm of $R_\calP(Z(W))$ is given by
\begin{align*}
\norm_{F/\Q} R_\calP(Z(W)) &= \prod_{j=1}^d \sigma_j(R_\calP(Z(W))) \\
&= C(Z(W))\cdot \Psi(Z(W),\calP).
\end{align*}
Hence the statement follows from Corollary \ref{cor:s1}.
\end{proof}


\begin{remark} \label{rem7.6}
The map
$$
Z_j(W) \rightarrow X_{K, j} \rightarrow \pi_0(X_{K, j})=F_+^\times \backslash \hat{F}^\times/\nu(K)
$$
is surjective when $\hat{F}^\times =\nu(K)\norm(\hat{\kay}^\times) F^\times$, where   $\nu$ is the spinor norm on $H_j$ and $F_+^\times$ is the subgroup of $F^\times$ of  the totally positive elements.
This is for instance the case if there is a prime $\frakp$ of $F$ ramified in $\kay$ such that $\calO_{F,\frakp}^\times \subset \nu(K)$.
In such a case
$$
C(Z(W)) = \norm(C)^{\frac{\deg Z(W)}r},
$$
where $\norm(C)$ denotes the product of the values of $C$ over the connected components of $\X_K(\C)$, and
$r =|\pi_0(X_{K, j})|$ does not depend on $j$.
\end{remark}

\begin{remark}
\label{rem:integrals}
Note that for $d=1$ the formulas of Theorem \ref{thm:fundtr} and Corollary \ref{cor:s1} are compatible with \cite{Scho}, \cite{BY} and \cite{Ku:Integrals}.
The Green function
$\tilde\Phi(z,h,\calP)$ in the present paper is equal to the Green function in the other papers only up to an additive constant which can be fixed by specifying $\int_{X_{K,1}}\tilde\Phi(z,h,\calP)\Omega^n$.
The Green function $\tilde\Phi(z,h,\calP)$ has vanishing integral over $X_{K,1}$ and in the formula for the CM values the extra term $-\deg(Z(W))\alpha(\calP)$ occurs, where $\alpha(\calP)= 2\CT\langle \calP,\calE_\LL^{(0)}\rangle$.
Hence the Green function $\tilde\Phi(z,h,\calP)+\alpha(\calP)$ has integral $\vol(X_{K,1})\alpha(\calP)$ over $X_{K,1}$ and no extra term in the CM value formula. This Green function has the same additive normalization as the ones in \cite{Scho}, \cite{BY} and \cite{Ku:Integrals}.
\end{remark}

\section{Examples}
\label{sect:8}

As an example we consider the Shimura curve $X$ associated to the triangle group $G_{2,3,7}$, see \cite{El} Section 5.3 and \cite{El2} Section 2.3. It is a genus zero curve with a number of striking properties. For instance, the minimal quotient area of a discrete subgroup of $\operatorname{PSL}_2(\R)$ is $1/42$, and it is only attained by the triangle group $G_{2,3,7}$.
Elkies constructed a generator $t$ of the function field of $X$ and computed its values at certain CM points. Here we show that this function is a regularized theta lift in the sense of Theorem \ref{thm:bop}. Employing Corollary \ref{cor:s2}, we verify some of Elkies' computations and determine some further CM values of $t$. The results are summarized in Table \ref{tab:1} below\footnote{A Magma program for the explicit evaluation of the formula of Corollary \ref{cor:s2} can be obtained from the authors.}.

Let $F =\mathbb Q(\zeta_7)^+$ be the maximal totally real subfield of the cyclotomic field $\mathbb Q(\zeta_7)$, where $\zeta_7=e^{2\pi i/7}$. Then $F$ is a Galois extension of $\Q$ which is generated by $\alpha=\zeta_7+\zeta_7^{-1}$. The minimal polynomial of $\alpha$ is $x^3+x^2-2x-1$, and the ring of integers $\calO_F$ of $F$ is given by $\Z[\alpha]$. The field $F$ has narrow class number $1$ and discriminant $49$. The prime ideal above the totally ramified prime $7$ is generated by
$(1-\zeta_7)(1-\zeta_7^{-1}) = 2 -\alpha$. The different has the totally positive generator $\delta=(2-\alpha)^2$.

Let $\sigma_1, \sigma_2, \sigma_3$ be the three real embeddings of
$F$. Let $B$ be the (up to isomorphism unique) quaternion algebra over
$F$ which is ramified exactly at the two infinite places $\sigma_2$ and
$\sigma_3$. Let $\OO_B$ be a fixed maximal order of $B$. We will
identify $\hat{B}$ with $M_2(\hat{F})$ in such a way that
$\hat{\OO}_B$ is identified with $\Mat_2(\hat{\OO}_F)$.  We consider
the quadratic space
$$
V =\{ x \in   B;  \;  \tr x =0 \},  \quad Q(x) = \delta^{-1}
\det x = -\delta^{-1} x^2,
$$
where $\det x$ is the reduced norm. The $\calO_F$-lattice $L = V \cap \OO_B$ is even and integral.
We have
$$\hat{L} = \{ x =\kzxz {b} {a} {c} {-b};  \;  a , b, c \in \hat{\OO}_F\},
$$
and $L'/L \cong \OO_F/2\OO_F\cong \F_8$, since the prime $2$ is inert in $F$. In this case, $ H =\Gspin(V)$ can be
identified with $\Res_{F/\Q} B^\times$, which acts on $V$ by
conjugation.  The compact open subgroup
$K=\hat{\OO}_B^\times=\GL_2(\hat{\OO}_F) \subset H(\hat \Q)$ preserves
$\hat{L}$ and acts trivially on $L'/L$. The associated Shimura curve
$X =X_{K, 1}$ over $\C$ is given by
$$
X= B^\times \backslash \mathbb H^\pm \times \widehat{
B}^\times/K  = \OO_{B}^1 \backslash \mathbb H
$$
by the strong approximation theorem.  Here $\OO_B^1$ is the group of
norm one elements in $\OO_B$ and $\mathbb H$ denotes the usual upper
complex half plane.  The Shimura curve $X$ has a canonical model over
$F$, and its Galois conjugates are $X_{K, 2}$ and $X_{K, 3}$ as
discussed in Section \ref{sect:global}.  It can be shown that $X$ has actually a
model over $\Q$, see \cite[Section 5.3]{El}, and that the curves $X_{K, i}$
are isomorphic to each other. We remark that in the setting of Section
\ref{sect:global}, the Shimura curve $X$ is associated to the
incoherent quaternion algebra $\B$ which is split at all finite places
and ramified at all infinite places.

\subsection{CM cycles}

For a totally imaginary quadratic field extension $\kay$ of $F$, we write
${\OO}_\kay =\OO_F + {\OO}_F \gamma$ for some $\gamma\in \calO_k$, and
define an embedding $\hat{\iota}: \hat{\kay} \to
\hat{B}=\Mat_2(\hat{F})$ by
\begin{equation}
\begin{pmatrix}r \\ r\gamma \end{pmatrix} = \hat\iota(r) \begin{pmatrix} 1 \\
\gamma\end{pmatrix}, \quad \text{for $r \in \hat{\kay}$.}
\end{equation}
Then  it is easy to see that $\hat{\iota}^{-1}(\hat{\OO}_B) =\hat{\OO}_\kay$.
We choose and fix an embedding $\iota_\infty:  \kay_\infty
\to B_\infty$. This gives an embedding
$\iota=\hat{\iota} \iota_\infty: \A_\kay \to B(\A_F)$, and induces an embedding $\iota: \kay \to B$.
 To simplify the notation, we drop
the embedding $\iota$ and view $\hat{\OO}_\kay$ as a subring of
$\hat{\OO}_B$, and write
\begin{equation}
B(\A_F) = \A_\kay  \oplus  \A_\kay \xi_\A,
\end{equation}
for some $\xi_\A =\hat{\xi} \xi_\infty \in B(\A_F)$ such that
$\xi_\A^2  \in  \A_F^\times$,  $\hat{\xi}= \kzxz {1} {0} {0} {-1}$, and $\xi_\A r = \bar r \xi_\A$
for  any $r \in \A_\kay$. We consider the totally positive definite subspace
$$
W=V \cap \kay =\{z \in \kay; \;  \tr_{\kay/F}=0\}, \quad  Q(z) = \delta^{-1} z
\bar z =-\delta^{-1} z^2.
$$
We write $k=F(\sqrt\Delta)$
for some square-free totally negative element $\Delta \in \OO_F$.
We obtain the sublattices
\begin{equation}
P =W \cap L =\OO_F \sqrt{\Delta}, \quad N =W^\perp \cap L.
\end{equation}
Then we have
\begin{equation}
(P, Q) \cong (\OO_F, -\frac{\Delta}{\delta} x^2),  \quad  P' \cong
\frac{1}{2\Delta} \OO_F,
\end{equation}
and
\begin{equation} \label{eq:N}
(\hat{N}, Q) =(\hat{\OO}_\kay \hat{\xi}, \delta^{-1} \det ) \cong (\hat{\OO}_\kay, -\delta^{-1} r \bar r), \quad
\hat{N}' \cong \hat{\partial}_{\kay/F}^{-1}.
\end{equation}
In this section, we denote the CM cycle $Z(W)$ corresponding to $W$
by
$$
Z(\OO_\kay) =\kay^\times \backslash \{ z_W^\pm \} \times \hat{\kay}^\times/\hat{\OO}_\kay^\times,
$$
and identify it with its image in $X$.  Notice that $z_W^\pm$ collapse
to one point in $X$. So $Z(\OO_\kay)$ has $ h_\kay$ points and each
point is counted with multiplicity $\frac{4}{w_\kay}$. When $\kay =
F(\sqrt{-4})$, $F(\sqrt{-3})$, $F(\sqrt{-7})=\Q(\zeta_7)$,
$F(\sqrt{-8})$, $F(\sqrt{-11})$, we have $h_\kay =1$. We denote the corresponding
unique point in $Z(\OO_\kay)$ by $P_4$, $P_3$, $P_7$, $P_8$, and
$P_{11}$ respectively (counted with multiplicity $1$).
Note that the point $P_4$ is denoted by $P_2$
in \cite{El}.  According to \cite{El}, these points are all defined
over $\Q$.

\subsection{The divisors $Z(m, \mu)$}

The discriminant group $L'/L = \frac{1}2 \OO_F/\OO_F
\cong \mathbb F_8$ is a field, and the $\F_8$-valued quadratic form on $L'/L$ induced by $Q$ is given by the Frobenius automorphism.
Consequently, if $m \in \frac{1}{4\delta} \OO_F$ is totally positive, then there exists at most one $\mu \in L'/L$ such that
$m \in Q(\mu) +\partial^{-1}$.
So we will simply
write $Z(m) =Z(m, \mu)$.
By a similar argument as in \cite[Lemma 7.2]{BY}, one sees that if
$m = -\frac{d}{4\delta}$ such that $\kay_d=F(\sqrt{d})$ is a CM
number field with relative discriminant $d_{\kay/F} = d\OO_F$, one has
$$
Z(m) = Z(\OO_{d})
$$
as divisors on $X$. Here we have briefly written $\calO_d$ for the ring of integers $\calO_{k_d}\subset k_d$.

\subsection{Elkies' rational  function}

\label{sect:8.3}

By the above discussion, one sees that $Z(\frac{4}{4\delta})$ is Elkies' CM point $P_4$ with multiplicity $1$, and
$Z(\frac{d_7}{4 \delta})$ with $d_7 = 2-\alpha$ is Elkies' CM point $P_7$ with multiplicity $\frac{2}{7}$.
We consider the principal part polynomial
$$
\mathcal P=2q^{-\frac{4}{4\delta}} \chi_{0} - 7 q^{-\frac{d_7}{4\delta}} \chi_{\mu_7},
$$
where $\mu_7\in L'/L$ is the unique coset such that $\frac{d_7}{4\delta}-Q(\mu)\in \partial^{-1}$. The corresponding divisor is given by
$$
Z(\mathcal P)=  2Z(\frac{4}{4\delta}) - 7 Z(\frac{d_7}{4 \delta})= 2P_4  -2 P_7.
$$
We aim to lift $\calP$ to a rational function on $X$ by means of Theorem \ref{thm:bop}.

First, we need to know that $\calP$ is weakly holomorphic, i.e., that $\xi(\calP)\in S_{\kappa,\rho_L}$ vanishes.
In fact, we have that $S_{\kappa,\rho_L}=\{0\}$, which can be seen as follows: If $g=\sum_{\mu}g_\mu \chi_\mu\in S_{\kappa,\rho_L}$, then $\tilde g(\tau):=\sum_{\mu}g_\mu(4\tau) $ is a scalar valued Hilbert cusp form of weight $\kappa$ for the group $\Gamma_0(4)$ in the sense of Shimura. Its Shimura lift $S(\tilde g)$ is a scalar valued Hilbert cusp form of weight $2\kappa-1$ for the group $\Gamma_0(2)$, see  \cite{Sh2}. Moreover, the maps $g\mapsto \tilde g\mapsto S(\tilde g)$ are injective.
A dimension computation shows that $S_2(\Gamma_0(2))$ vanishes. This proves the claim.

Second, we have to compute the weight of the lift $\Psi(z,h,\calP)$.
According to \eqref{eq:divf2}, it is given by
\begin{align*}
-\frac{B(\calP)}{2} &= -B_L\left(\frac{4}{4\delta},0\right)
+ \frac{7}{2} B_L\left(\frac{d_7}{4 \delta},\mu_7\right)\\
&= \frac{\deg(Z(\frac{4}{4\delta},0))}{\vol(X_K)}
- \frac{7}{2} \frac{\deg(Z(\frac{d_7}{4 \delta},\mu_7))}{\vol(X_K)}\\
&= 0.
\end{align*}
Hence, the lift $\Psi(z,h,\calP)$ of $\calP$ is a rational function on $X$ with a double zero at $P_4$ and a septuple pole at $P_7$. It must agree with Elkies' function $t$ (see \cite[Section 5.3]{El}) up to a constant multiple.
Since Elkies' function
is defined over $\Q$, we can take it for the function $R_\mathcal P$ in Corollary \ref{cor:s2}. We obtain in that way:

\begin{proposition}
Let $t$ be Elkies' rational function on $X$ with a double zero at $P_4$, a pole of order $7$ at $P_7$, and $t(P_3)=1$.
Put $\tilde t = 2^6\cdot 3^3 \cdot t$. Then for any CM cycle $Z(W)$ which is disjoint from $\dv(\tilde t)$, we have
\begin{align} \label{eq:CMvalue}
\norm_{F/\Q}\big(\tilde t(Z(W))\big)
&=\exp\left( -\frac{\deg Z(W)}{4} \CT\langle \mathcal P, \Theta_P\otimes \mathcal E_N^{(0)}\rangle\right)\\
\nonumber
&=\prod_{m\in \{\frac{4}{4\delta}, \frac{d_7}{4\delta}\}}\prod_{\substack{x\in P'\\ Q(x)\ll m}} \exp\big( \beta_N^*(m-Q(x),x)\big)^{c(m)}.
\end{align}
Here $c(\frac{4}{4\delta})=2$, $c(\frac{d_7}{4\delta})=-7$, and $ \beta_N^*(m,\mu)$ denote the coefficients of $\calE_N^{(0)}$ given in Corollary~\ref{cor4.10} and Proposition~\ref{prop:ex}.
\end{proposition}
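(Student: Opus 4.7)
The plan is to derive the formula by identifying $\tilde t$ with (a scalar multiple of) the regularized theta lift $\Psi(z,h,\calP)$ furnished by Theorem~\ref{thm:bop} and then invoking the CM value formula of Corollary~\ref{cor:s2}. All hypotheses of Theorem~\ref{thm:bop} have already been checked in the preceding discussion: integrality of the coefficients of $\calP$ holds by construction, $\calP$ is weakly holomorphic because $S_{\kappa,\rho_L}=\{0\}$ (via the Shimura lift to $S_2(\Gamma_0(2))=\{0\}$), and the weight $-B(\calP)/2$ vanishes by the degree cancellation computed via \eqref{eq:divf2}. Theorem~\ref{thm:bop} then produces a rational function $\Psi(z,h,\calP)$ on $X$ with divisor $\tfrac12 Z(\calP)=P_4-P_7$, which matches the principal divisor of Elkies' $t$; hence $\Psi=c\cdot t$ for some $c\in\C^\times$.

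Next I would apply Corollary~\ref{cor:s2} to $\Psi$. Since $X$ is (geometrically) connected, Remark~\ref{rem7.6} identifies the locally constant factor $C(Z(W))$ with $|c_0|^{\deg Z(W)}$ for a single scalar $c_0\in\C^\times$ depending only on the choice of a rational model of $\Psi$ on $\X_K$, so that
$$
\log\bigl|\norm_{F/\Q}\Psi(Z(W))\bigr|=\deg Z(W)\cdot\log|c_0|-\tfrac{\deg Z(W)}{4}\,\CT\langle\calP,\Theta_P\otimes\calE_N^{(0)}\rangle.
$$
To pin down $|c_0|$ (equivalently, the constant $2^6\cdot 3^3$ in the statement), I would evaluate this identity at the CM cycle attached to $\kay_3=F(\sqrt{-3})$, where $\norm_{F/\Q}t(P_3)=1$ by Elkies' normalization $t(P_3)=1$. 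This forces $\log|c_0|$ to equal the constant term on the right at $\kay_3$; the final step is to verify by direct computation, using Proposition~\ref{prop:ex} to evaluate the coefficients $\beta^*_N$ of $\calE_N^{(0)}$ for the lattice $N$ attached to $\kay_3$ and the explicit unary theta series $\Theta_P$, that this quantity equals $\log(2^6\cdot 3^3)$. Rescaling $\Psi$ by this factor then absorbs the constant and yields the first equality of~\eqref{eq:CMvalue}.

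For the second equality I would simply unfold the constant term pairing: since $P=\OO_F\sqrt\Delta$ is of rank one so $\Theta_P=\sum_{x\in P'}q^{Q(x)}\chi_{x+P}$, the $(m,\mu)$-th Fourier coefficient of $\Theta_P\otimes\calE_N^{(0)}$ equals $\sum_{x\in P'}\beta^*_N\bigl(m-Q(x),\,\mu-(x+P)\bigr)$ under the identification of $S_{P\oplus N}$ with a subspace of $S_L$. Substituting this into $\CT\langle\calP,\Theta_P\otimes\calE_N^{(0)}\rangle$ and using $c(4/4\delta)=2$, $c(d_7/4\delta)=-7$ produces the displayed product. The main obstacle is the explicit evaluation of the normalizing constant at $P_3$: this requires computing a finite sum of local Whittaker derivatives $W^{*,\prime}_{\mathfrak p}(0,\mu)$ at primes $\mathfrak p\in\Diff(\V,m-Q(x))$ for $m\in S(\calP)$ and $x\in P'$ with $m-Q(x)\gg 0$, and checking that it collapses to the clean value $\log(2^6\cdot 3^3)$.
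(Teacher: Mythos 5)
Your proposal follows essentially the same route as the paper: identify $t$ with the lift $\Psi(z,h,\calP)$ of Theorem~\ref{thm:bop} (using $S_{\kappa,\rho_L}=\{0\}$ and $B(\calP)=0$), apply Corollary~\ref{cor:s2} together with Remark~\ref{rem7.6} to get the CM value up to a factor $C^{\deg Z(W)}$, and pin down $C$ by evaluating at the cycle $Z(\calO_{-3})=\tfrac23 P_3$ where Elkies' normalization $t(P_3)=1$ gives $\norm_{F/\Q}\tilde t(Z(\calO_{-3}))=2^{12}\cdot 3^6$; the paper checks numerically that the exponential on the right equals the same number, so $C=1$. The one step you gloss over is the passage from the first to the second displayed equality. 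By Corollary~\ref{cor4.10} the $(m,\mu)$-coefficient of $\calE_N^{(0)}$ is $-\tfrac{2^d}{\Lambda(1,\chi)}\beta_N^*(m,\mu)$, not $\beta_N^*(m,\mu)$ itself, so after unfolding against the rank-one theta series the exponent acquires the prefactor $\tfrac{\deg Z(W)}{4}\cdot\tfrac{2^d}{\Lambda(1,\chi)}$. The clean product over $\exp(\beta_N^*(m-Q(x),x))^{c(m)}$ only emerges because this prefactor equals $1$, which the paper verifies via the class number formula $\Lambda(1,\chi_{\kay/F})=\tfrac{2}{w_\kay}h_\kay\tfrac{R_\kay}{R_F}$, the identity $\deg Z(\calO_\kay)=\tfrac{4}{w_\kay}h_\kay$ from \eqref{eq:degCM2}, and the fact that $R_\kay/R_F=4$ here. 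Without this cancellation your "simply unfold and substitute" step does not yield the stated formula, so you should add that computation; otherwise the argument is sound.
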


\begin{proof}
Since the $X_{K, i}$ are connected, Corollary \ref{cor:s2} and Remark \ref{rem7.6} assert that
 there is a positive constant $C$, independent of $W$, such that
$$
\norm_{F/\Q} \tilde t(Z(W))=\pm C^{\deg Z(W)}  \exp\left( -\frac{\deg Z(W)}{4} \CT\langle \mathcal P, \Theta_P\otimes \mathcal E_N^{(0)}\rangle\right).
$$
The constant $C$ can be determined by evaluating at
$Z(W) =Z(\OO_{-3})=\frac{2}{3} P_3$. The norm of the value of $\tilde t$ at this cycle is $2^{12}\cdot 3^{6}$. On the other hand, evaluating our formula, we obtain
$|\Psi(Z(\OO_{-3}),\calP)|=2^{12}\cdot 3^{6}$, so that $C=1$. This implies the first formula.

The second formula follows by inserting the Fourier expansion of $\calE_N^{(0)}$ (Corollary \ref{cor4.10}) and the formula $\deg Z(\OO_\kay) =\frac{4}{w_\kay} h_\kay$ (see \eqref{eq:degCM2}) in the exponential. Note that using the class number formula
$$
\Lambda(1, \chi_{\kay/F}) = \frac{2}{w_\kay} \frac{h_\kay}{h_F} \frac{R_\kay}{R_F} = \frac{2}{w_\kay} h_\kay \frac{R_\kay}{R_F},
$$
and the fact that $\frac{R_k}{R_F}=4$, we see that
\[
\frac{\deg Z(W)}{4}\cdot \frac{2^3}{\Lambda(1,\chi)} = \frac{h_\kay}{w_\kay}\cdot  \frac{2^3 w_k R_F}{2 h_k R_k}=\frac{4R_F}{R_k} =1.
\]
\end{proof}

Table \ref{tab:1} contains the values of $\tilde t $ at CM cycles $\frac{1}{2}Z(\calO_{d})$ for a few (mainly) odd  relative discriminants $d\in \calO_F$.
The values for $d=-8$ and $d=-11$ were previously computed by Elkies.
We evaluated the formulas of Corollary \ref{cor:s2} and Proposition~\ref{prop:ex} by means of a Magma program. Since we use Proposition~\ref{prop:ex}, we have to assume that $d$ is coprime to $2$ (otherwise we can only compute the CM value up to a power of $2$). There are exactly 6 rational primes $p\leq 1000$ that split in $\calO_F$ and for which there exists a totally negative prime element $d\in \calO_F$ such that $d\calO_F\cap\Z= (p)$  and such that $k_d/F$ is unramified at $2$. These are the primes $167$, $239$, $251$, $379$, $491$, $547$. The CM values corresponding to the first three primes of this list are given at the end of the table. Recall that $\alpha = \zeta_7 +\zeta_7^{-1}$ in the table.

\begin{table}[h]
\begin{center}
\caption{\label{tab:1} Values of $\tilde t$ at the CM cycles $\tfrac{1}{2}Z(\calO_d)$}
\begin{tabular}{|r|c|c|c| }
  \hline \rule[-3mm]{0mm}{8mm}
  $d$ & $h(k_d)$ & $2/w_{k_d}$ & $\norm_{F/\Q} \tilde t\left(\tfrac{1}{2}Z(\calO_d)\right)$   \\
  \hline \hline\rule[-3mm]{0mm}{8mm}
  $-3$ & $1$ & $1/3$ &   $2^{6}\cdot 3^{3}$  \\
  \hline \rule[-3mm]{0mm}{8mm}
  $-4$ & $1$ & $1/2$   &   $0$\\
  \hline \rule[-3mm]{0mm}{8mm}
  $\alpha-2$& $1$ & $1/7$&$\infty$\\
  \hline  \rule[-3mm]{0mm}{8mm}
  $-8$  & $1$ & $1$  &  $\frac{2^*\cdot 7^{9}\cdot 167^{6}\cdot 239^{6}}
  {13^{21}}$\\
  \hline \rule[-3mm]{0mm}{8mm}
  $-11$ & $1$ & $1$  &  $\frac{2^{18}\cdot 7^{9}\cdot  11^{3}\cdot 43^{6}\cdot 127^{6} \cdot 139^{6}\cdot 307^{6}\cdot
  659^{6}}{13^{21}\cdot 83^{21}}$\\
  \hline \rule[-3mm]{0mm}{8mm}
  $-15$ & $2$ & $1$ & $\frac{ 3^{18}\cdot 7^{18}\cdot 11^{6}\cdot 43^{12}\cdot 71^{12}\cdot 839^{6}\cdot 911^{6}\cdot 2099^{6}\cdot 2339^{6}}
  {13^{42}\cdot 41^{21}\cdot 251^{15}}$\\
  \hline \rule[-3mm]{0mm}{8mm}
  $-19$ & $3$ & $1$ & $\frac{ 2^{54}\cdot 19^{3}\cdot 71^{6}\cdot 127^{6}\cdot 211^{6}\cdot 223^{6}
  \cdot 743^{6}\cdot 911^{6}\cdot 1091^{6}\cdot 1399^{6}\cdot 2339^{6}\cdot 2659^{6}\cdot 2687^{6}
  \cdot 3571^{6}\cdot 4787^{6}\cdot 5167^{6}}
  {3^{24}\cdot 13^{63} \cdot 41^{21}\cdot 167^{21}\cdot 307^{21}}$\\
  \hline \rule[-3mm]{0mm}{8mm}
  $-23$& $9$ & $1$ & $\frac{7^{69}\cdot 11^{12}\cdot 19^{6}\cdot 23^{3}\cdot 43^{54}\cdot 251^{12}\cdot
  503^{12}\cdot 743^{12}\cdot 911^{6}\cdot 1091^{6}\cdot 5167^{6}\cdot 5839^{6}}
  {83^{18}\cdot 97^{21}\cdot 181^{21}\cdot 419^{21}\cdot 1049^{21}}$\\
  \hline \rule[-3mm]{0mm}{8mm}
  $1-8\alpha^2$ & $1$ & $1$ & $\frac{ 43^{6}\cdot 71^{2}\cdot 83^{2}\cdot 167}
  {13^{7}}$\\
  \hline \rule[-3mm]{0mm}{8mm}
  $\alpha^2-8$ & $1$ & $1$ & $\frac{7^{7}\cdot 43^{2}\cdot 71^{2}\cdot 139^{2}\cdot 239}
  {13^{7}}$\\
  \hline \rule[-3mm]{0mm}{8mm}
  $4\alpha-7$ & $1$ & $1$ & $\frac{2^{18}\cdot 43^{2}\cdot 71^{2}\cdot 83^{2}\cdot 127^{4}\cdot 251}
  {13^{14}}$\\
  \hline
\end{tabular}
\end{center}
\end{table}

Some further examples can be constructed in this case as follows. If $k/F$ is a CM extension as above, and $d$ is a totally negative generator of the discriminant of $k/F$, then
\[
\tilde t_d(z):= \prod_{a\in Z(\calO_d)}\big(\tilde t(z)-\tilde t(a)\big)
\]
is a rational function on $X$ with divisor $Z(\calO_d)-\deg(Z(\calO_d))\cdot P_7$. Up to a constant multiple, it is the regularized theta lift $\Psi(z,\calP)$ of the principal part polynomial
$\calP = q^{d/4\delta}\chi_{\mu_d}-7\deg(Z(\calO_k)) q^{-d_7/4\delta}\chi_{\mu_7}$.  Therefore the CM values of this function can be determined using Corollary
\ref{cor:s2}. Note that $\tilde t_{-4}= \tilde t$.

The values of $\tilde t_{-8}(z) = \tilde t(z)-\tilde t(P_8)$ at the CM points in Elkies' list can be computed using his data. On the other hand, for the odd discriminants $d=-3, -11$ we computed these values with our formula and found that the results agree.

Finally, we remark that it would be interesting to study the examples in \cite[Section 7]{Vo} in a similar way.

\end{document}